\else\message{WARNING: the \string\diagram\space
command is already defined and will not be loaded again}\expandafter\endinput
\edef\cdrestoreat{
\noexpand\catcode`\noexpand\@=\the\catcode`\@
\noexpand\catcode`\noexpand\#=\the\catcode`\#
\noexpand\catcode`\noexpand\$=\the\catcode`\$
\noexpand\catcode`\noexpand\<=\the\catcode`\<
\noexpand\catcode`\noexpand\>=\the\catcode`\>
\noexpand\catcode`\noexpand\:=\the\catcode`\:
\noexpand\catcode`\noexpand\;=\the\catcode`\;
\noexpand\catcode`\noexpand\!=\the\catcode`\!
\noexpand\catcode`\noexpand\?=\the\catcode`\?
\noexpand\catcode`\noexpand\+=\the\catcode'53
}\catcode`\@=11 \catcode`\#=6 \catcode`\<=12 \catcode`\>=12 \catcode'53=12
\let\diagram@help@messages y\fi
\def\cdps@Rokicki#1{\special{ps:#1}}\let\cdps@dvips\cdps@Rokicki\let
\let\CD@HB\cdps@Rokicki\let\CD@IK\cdps@Rokicki
\let\CD@HB\cdps@Rokicki
\def\cdps@Bechtolsheim#1{\special{dvitps: Literal "#1"}}%
\let\cdps@dvitps\cdps@Bechtolsheim\let\cdps@IntegratedComputerSystems
\def\cdps@Clark#1{\special{dvitops: inline #1}}
\let\cdps@dvitops\cdps@Clark
\let\cdps@OzTeX\empty\let\cdps@oztex\empty\let\cdps@Trevorrow\empty
\def\cdps@Coombes#1{\special{ps-string #1}}
\def\CD@DE{\global\let}\def\CD@RH{\outer\def}
\xdef\CD@o{\string\{}\xdef\CD@yC{\string\}}
\xdef\CD@S{\string\&}
\xdef\CD@nC{\string\$}\gdef\CD@LG{$$}
\gdef\CD@uG{^^J}
\gdef\CD@uG{^^M}
\gdef\CD@uG{^^J}
\mathchardef\lessthan='30474 \mathchardef\greaterthan='30476
\font\tenln=line10\relax
\let\tenlnw\nullfont\else
\font\tenlnw=linew10\relax
\def\cd@shouldnt#1{\CD@KB{* THIS (#1) SHOULD NEVER HAPPEN! *}}
\def\get@round@pair#1(#2,#3){#1{#2}{#3}}
\def\get@square@arg#1[#2]{#1{#2}}
\def\CD@AE#1{\CD@PK\let\CD@DH\CD@@E\CD@@E#1,],}
\def\CD@m{[}\def\CD@RD{]}\def\commdiag#1{{\let\enddiagram\relax\diagram[]#1%
\enddiagram}}
\def\CD@BF{{\ifx\CD@EH[\aftergroup\get@square@arg\aftergroup\CD@YH\else
\aftergroup\CD@JH\fi}}
\def\CD@CF#1#2{\def\CD@YH{#1}\def\CD@JH{#2}\futurelet\CD@EH\CD@BF}
\def\CD@KK{|}
\def\CD@PB{
\tokcase\CD@DD:\CD@y\break@args;\catcase\@super:\upper@label;\catcase\CD@lJ:%
\lower@label;\tokcase{~}:\middle@label;
\tokcase<:\CD@iF;
\tokcase>:\CD@iI;
\tokcase(:\CD@BC;
\tokcase[:\optional@;
\tokcase.:\CD@JJ;
\catcase\space:\eat@space;\catcase\bgroup:\positional@;\default:\CD@@A
\break@args;\endswitch}
\def\switch@arg{
\catcase\@super:\upper@label;\catcase\CD@lJ:\lower@label;\tokcase[:\optional@
;
\tokcase.:\CD@JJ;
\catcase\space:\eat@space;\catcase\bgroup:\positional@;\tokcase{~}:%
\middle@label;
\default:\CD@y\break@args;\endswitch}
\let\CD@tJ\relax\ifx\protect\CD@qK\let\protect\relax\fi\ifx\AtEndDocument
\def\CD@PG{\CD@gB}\def\CD@GF#1#2{}\else\def\CD@PG#1{\edef\CD@CH{#1}%
\expandafter\CD@oC\CD@CH\CD@OD}\def\CD@oC#1\CD@OD{\AtEndDocument{\typeout{%
\CD@tA: #1}}}\def\CD@GF#1#2{\gdef#1{#2}\AtEndDocument{#1}}\fi\def\CD@ZA#1#2{%
\def#1{\CD@PG{#2\CD@mD\CD@W}\CD@DE#1\relax}}\def\CD@uF#1\repeat{\def\CD@p{#1}%
\CD@OF}\def\CD@OF{\CD@p\relax\expandafter\CD@OF\fi}\def\CD@sF#1\repeat{\def
\CD@q{#1}\CD@PF}\def\CD@PF{\CD@q\relax\expandafter\CD@PF\fi}\def\CD@tF#1%
\def\CD@QF{\CD@r\relax\expandafter\CD@QF\fi}\def
\def\CD@rG#1#2{\csname newtoks\endcsname#1#1=%
\expandafter{\csname#2\endcsname}}\else\csname newtoks\endcsname\no@cd@help
\def\CD@rG#1#2{\let#1\no@cd@help}\fi\chardef\CD@lF
\chardef\CD@lI=2 \chardef\CD@MH=5 \chardef\CD@tH=6 \chardef\CD@sH=7
\chardef\CD@PC=9 \dimendef\CD@hI=2 \dimendef\CD@hF=3 \dimendef\CD@mF=4
\def\sdef#1#2{\def#1{#2}%
}\def\CD@L#1{\expandafter\aftergroup\csname#1\endcsname}\def\CD@RC#1{%
\expandafter\def\csname#1\endcsname}\def\CD@sD#1{\expandafter\gdef\csname#1%
\endcsname}\def\CD@vC#1{\expandafter\edef\csname#1\endcsname}\def\CD@nF#1#2{%
\expandafter\let\csname#1\expandafter\endcsname\csname#2\endcsname}\def\CD@EE
\def\CD@AK#1{\csname#1\endcsname}\def\CD@XJ#1{\expandafter\show\csname#1%
\endcsname}\def\CD@ZJ#1{\expandafter\showthe\csname#1\endcsname}\def\CD@WJ#1{%
\expandafter\showbox\csname#1\endcsname}\def\CD@tA{Commutative Diagram}\edef
\edef\CD@dC{\string\diagram}\edef\CD@HD{\string\enddiagram
}\edef\CD@EC{\string\\}\def\CD@eF{LaTeX}\ifx\@ignoretrue\CD@qK\expandafter
\def\@ignoretrue{%
\global\ignore@true}\def\@ignorefalse{\global\ignore@false}\fi
\def\CD@g{{\ifnum0=`}\fi}\def\CD@wC{\ifnum0=`{\fi}}\def\catcase#1:{\ifcat
\noexpand\CD@EH#1\CD@tJ\expandafter\CD@kC\else\expandafter\CD@dJ\fi}\def
\def\CD@kC#1;#2\endswitch{#1}\def\CD@dJ#1;{}\let\endswitch\relax\def\default:%
\def\at@{@}\fi\edef\CD@P{\CD@o pt\CD@yC}%
\lTo\sp{#1}\sb{#2}\CD@z}\CD@RC{\CD@P)}#1)#2){\CD@z\rTo\sp{#1}\sb{#2}\CD@z}%
\def\CD@O{\def\endCD{\enddiagram}\CD@RC{\CD@P A}##1A##2A{\uTo<{##1}>{##2}%
\CD@z\CD@z}\CD@RC{\CD@P V}##1V##2V{\dTo<{##1}>{##2}\CD@z\CD@z}\CD@RC{\CD@P=}{%
\CD@z\hEq\CD@z}\CD@RC{\CD@P\CD@KK}{\vEq\CD@z\CD@z}\CD@RC{\CD@P\string\vert}{%
\vEq\CD@z\CD@z}\CD@RC{\CD@P.}{\CD@z\CD@z}\let\CD@z\CD@Q}\def\CD@IE{\let\tmp
\CD@JE\ifcat A\noexpand\CD@CH\else\ifcat=\noexpand\CD@CH\else\ifcat\relax
\noexpand\CD@CH\else\let\tmp\at@\fi\fi\fi\tmp}\def\CD@JE#1{\CD@nF{tmp}{\CD@P
\string#1}\ifx\tmp\relax\def\tmp{\at@#1}\fi\tmp}\def\CD@z{}\begingroup
\def\aftergroup\CD@T\aftergroup{\aftergroup\def\catcode`\@\active
\aftergroup @\endgroup{\futurelet\CD@CH\CD@IE}}\def\CD@uK#1{\CD@nF{x}{tex_#1:%
D}\ifx\x\relax\else\CD@nF{#1}{x}\fi} \def\CD@vK{\CD@uK{par}\CD@uK{everypar}%
\CD@uK{noindent}\CD@uK{parskip}\CD@uK{unskip}\CD@uK{hskip}\CD@uK{indent}}
\newdimen\CD@OA
\newdimen\CD@PA\CD@tG\CD@gE\CD@@A\CD@y\CD@tG\CD@hE\CD@EA\CD@BA\newdimen\CD@RA
\newdimen\CD@SA\newcount\CD@yA\newcount\CD@zA\newdimen\CD@QA\newbox\CD@DA
\def\CD@V#1#2{\ifdim#1<%
#2\relax#1=#2\relax\fi}\def\CD@X#1#2{\ifdim#1>#2\relax#1=#2\relax\fi}%
\newdimen\CD@XH\CD@XH=1sp \newdimen\CD@zC\CD@zC\z@\def\CD@cJ{\ifdim\CD@zC=1em%
\else\CD@nJ\fi}\def\CD@nJ{\CD@zC1em\def\CD@NC{\fontdimen8\textfont3 }\CD@@J
\CD@NJ\setbox0=\vbox{\CD@t\noindent\CD@k\null\penalty-9993\null\CD@ND\null
\endgraf\setbox0=\lastbox\unskip\unpenalty\setbox1=\lastbox\global\setbox
\CD@IG=\hbox{\unhbox0\unskip\unskip\unpenalty\setbox0=\lastbox}\global\setbox
\CD@KG=\hbox{\unhbox1\unskip\unpenalty\setbox1=\lastbox}}}\newdimen\CD@@I
\def\CD@zH#1{\multiply#1\tw@\advance#1\ifnum
#1<\z@-\else+\fi\CD@@I\divide#1\tw@\divide#1\CD@@I\multiply#1\CD@@I}\def
\newdimen\CD@LF\newdimen\CD@oI\def
\def\CD@RJ#1{\CD@zD\count@\CD@@I#1\ifnum
\count@>\z@\divide\CD@@I\count@\fi\CD@gI\CD@NJ}\def\CD@NJ{\dimen@\CD@QC
\count@\dimen@\divide\count@5\divide\count@\CD@@I\edef\CD@OC{\the\count@}}%
\def\CD@AJ{\CD@QJ\z@}\def\CD@QJ#1{\CD@tI\axisheight\advance\CD@tI#1\relax
\advance\CD@tI-.5\CD@oI\CD@zH\CD@tI\CD@sI-\CD@tI\advance\CD@tI\CD@LF}%
\newdimen\CD@DC\CD@DC\z@\newdimen\CD@eJ\CD@eJ\z@\def\CD@CJ#1{\CD@sI#1\relax
\CD@tI\CD@sI\advance\CD@tI\CD@LF\relax}\def\horizhtdp{height\CD@tI depth%
\CD@sI}\def\axisheight{\fontdimen22\the\textfont\tw@}\def\script@axisheight{%
\fontdimen22\the\scriptfont\tw@}\def\ss@axisheight{\fontdimen22\the
\scriptscriptfont\tw@}\def\CD@NC{0.4pt}\def\CD@VK{\fontdimen3\textfont\z@}%
\def\CD@UK{\fontdimen3\textfont\z@}\newdimen\PileSpacing\newdimen\CD@nA\CD@nA
\def\CD@RG{\ifincommdiag1.3em\else2em\fi}\newdimen\CD@YB\def\CellSize{%
\afterassignment\CD@kB\DiagramCellHeight}\newdimen\DiagramCellHeight
\newdimen\DiagramCellWidth\DiagramCellWidth-%
\def\CD@kB{\DiagramCellWidth\DiagramCellHeight}\def\CD@QC{3em}%
\newdimen\MapShortFall\def\MapsAbut{\MapShortFall\z@\objectheight\z@
\objectwidth\z@}\newdimen\CD@iA\CD@iA\z@\CD@tG\CD@vE\CD@aB\CD@ZB\expandafter
\fi\CD@nF{ifUglyObsoleteDiagrams}{relax}\newif\ifUglyObsoleteDiagrams
\def\CD@nK{\CD@aB\UglyObsoleteDiagramsfalse}\def\CD@oK{\CD@ZB
\UglyObsoleteDiagramstrue}\CD@vE\CD@nK\else\CD@oK\fi\CD@tG\CD@hK\CD@dK\CD@cK
\def\CD@sK{\ifx\pdfoutput\CD@qK\else\ifx\pdfoutput\relax\else\ifnum
\pdfoutput>\z@\CD@pK\fi\fi\fi} \def\CD@pK{\global\CD@dK\global\CD@aB\global
\UglyObsoleteDiagramsfalse\global\let\CD@n\empty\global\let\CD@oK\relax
\global\let\CD@pK\relax\global\let\CD@sK\relax}\def\CD@tK#1{}\ifx\pdfliteral\CD@qK\else\ifx\pdfliteral\relax\else\let\CD@tK
\def\newarrowhead{\CD@mG h\CD@BG\CD@GG>}\def\newarrowtail{\CD@mG t%
\CD@BG\CD@GG>}\def\newarrowmiddle{\CD@mG m\CD@BG\hbox@maths\empty}\def
\def\CD@mG#1#2#3#4#5#6#7#8#9{\CD@RC{r#1%
:#5}{#2{#6}}\CD@RC{l#1:#5}{#2{#7}}\CD@RC{d#1:#5}{#3{#8}}\CD@RC{u#1:#5}{#3{#9}%
}\CD@vC{-#1:#5}{\expandafter\noexpand\csname-#1:#4\endcsname\noexpand\CD@MC}%
\CD@vC{+#1:#5}{\expandafter\noexpand\csname+#1:#4\endcsname\noexpand\CD@MC}}%
\CD@ZA\CD@MC{\CD@eF\space diagonals are used unless PostScript is set}\def
\def\CD@@J{\CD@IJ\CD@sJ<>ht\CD@IJ
\CD@sJ<>th}\def\CD@IJ#1#2#3#4#5{\CD@HJ{r#4}{#3}{l#5}{#2}{r#4:#1}\CD@HJ{r#5}{#%
2}{l#4}{#3}{l#4:#1}\CD@HJ{d#4}{#3}{u#5}{#2}{d#4:#1}\CD@HJ{d#5}{#2}{u#4}{#3}{u%
#4:#1}}\def\CD@HJ#1#2#3#4#5{\begingroup\aftergroup\CD@GJ\CD@L{#1+:#2}\CD@L{#1%
:#2}\CD@L{#3:#4}\CD@L{#5}\endgroup}\def\CD@GJ#1#2#3#4{\csname newbox%
\endcsname#1\def#2{\copy#1}\def#3{\copy#1}\setbox#1=\box\voidb@x}\def\CD@sJ{}%
\def\CD@GJ#1#2#3#4{\setbox#1=#4}\ifx\tenln\nullfont\def\CD@sJ{vee}\else
\let\CD@sJ\CD@eF\fi\def\CD@xF#1#2#3{\begingroup\aftergroup\CD@wF\CD@L{#1#2:#3%
#3}\CD@L{#1#2:#3}\aftergroup\CD@yF\CD@L{#1#2:#3-#3}\CD@L{#1#2:#3}\endgroup}%
\def\CD@wF#1#2{\def#1{\hbox{\rlap{#2}\kern.4\CD@zC#2}}}\def\CD@yF#1#2{\def#1{%
\hbox{\rlap{#2}\kern.4\CD@zC#2\kern-.4\CD@zC}}}\CD@xF lh>\CD@xF rt>\CD@xF rh<%
\def\CD@yF#1#2{\def#1{\hbox{\kern-.4\CD@zC\rlap{#2}\kern.4\CD@zC#2}%
}}\CD@xF rh>\CD@xF lh<\CD@xF lt>\CD@xF lt<\def\CD@wF#1#2{\def#1{\vbox{\vbox to%
\z@{#2\vss}\nointerlineskip\kern.4\CD@zC#2}}}\def\CD@yF#1#2{\def#1{\vbox{%
\vbox to\z@{#2\vss}\nointerlineskip\kern.4\CD@zC#2\kern-.4\CD@zC}}}\CD@xF uh>%
\def\CD@yF#1#2{\def#1{\vbox{\kern-.4\CD@zC\vbox
to\z@{#2\vss}\nointerlineskip\kern.4\CD@zC#2}}}\CD@xF dh>\CD@xF ut>\CD@xF uh<%
\def\CD@BG#1{\hbox{\mathsurround\z@\offinterlineskip\CD@k\mkern-1.5%
mu{#1}\mkern-1.5mu\CD@ND}}\def\hbox@maths#1{\hbox{\CD@k#1\CD@ND}}\def\CD@GG#1%
\CD@oI\CD@zH{\dimen0}\kern-\dimen0%
\def\CD@sB#1{\hbox to2\CD@LF{\hss\offinterlineskip\mathsurround
\z@\CD@k{#1}\CD@ND\hss}}\def\CD@vF#1{\hbox{\mathsurround\z@\CD@k{#1}\CD@ND}}%
\def\CD@bE#1{\hbox{\kern-.15\CD@zC\CD@k{#1}\CD@ND\kern-.15\CD@zC}}\def\CD@MK#%
\def\@fillh{%
\xleaders\vrule\horizhtdp}\def\@fillv{\xleaders\hrule width\CD@LF}\CD@nF{rf:-%
\def
\def\CD@BD{\CD@U\null\CD@@D\null\CD@@D\null}\edef\CD@lG{\string\newarrow}\def
\edef\CD@QG{#4}\edef\CD@jD{#5}\edef\CD@LE{#6}\let\CD@HE\CD@sG\let\CD@FK
\let\@x\CD@AH\ifx\CD@oJ\CD@iD\let\CD@oJ\empty\fi\ifx\CD@LE\CD@jD\let
\def\CD@LI{r}\def\CD@SF{l}\def\CD@IC{d}\def\CD@yJ{u}\def\CD@gH
\def\@m{-}\ifx\CD@iD\CD@jD\ifx\CD@QG\CD@iD\let\CD@QG\empty\fi\ifx\CD@LE
\let\@x\CD@yG\else\let\@x\CD@zG\fi\fi\else\edef\CD@a{%
\CD@iD\CD@oJ}\ifx\CD@a\empty\ifx\CD@QG\CD@jD\let\CD@QG\empty\fi\fi\fi\ifmmode
\CD@b\CD@L{r\@name}\fi\fi\endgroup}\def
\def\CD@BH{\CD@vG\CD@IC\CD@yJ du%
\Vertical@Map}\def\CD@AH{\CD@vG\CD@gH\@m+-\Vector@Map}\def\CD@yG{\CD@vG\CD@gH
\@m+-\Slant@Map}\def\CD@zG{\CD@vG\CD@gH\@m+-\Slope@Map}\catcode`\/=\active
\def\CD@vG#1#2#3#4#5{\CD@jG#1#3#5t:\CD@oJ/f:\CD@iD/m:\CD@QG/f:\CD@jD/h:\CD@LE
//\CD@jG#2#4#5h:\CD@LE/f:\CD@jD/m:\CD@QG/f:\CD@iD/t:\CD@oJ//}\def\CD@jG#1#2#3%
\def\CD@M#1/{\edef\CD@EH{#1}\ifx\CD@EH\empty\else\CD@L{%
\CD@fG#1}\expandafter\CD@M\fi}\catcode`\/=12 \def\CD@nG#1#2#3#4#5#6#7#8{%
\aftergroup\sdef\CD@L{#6\@name}\aftergroup{\CD@L{#2\@name}\if#2#4\aftergroup
\CD@CI\else\aftergroup\CD@BI\fi\CD@L{#1\@name}%
\aftergroup(\aftergroup#3\aftergroup,\aftergroup#5\aftergroup)\aftergroup}}%
\def\CD@oB#1#2#3#4{\expandafter\ifx\csname#1#2:#4\endcsname\relax\CD@y\CD@gB{%
arrow#3 "#4" undefined}\fi}\CD@rG\CD@VE{All five components must be defined
before an arrow.}\CD@rG\CD@SE{\CD@lG, unlike \string\HorizontalMap, is a
declaration.}\def\CD@b#1{\CD@YA{Arrows \string#1 etc could not be defined}%
\CD@VE}\def\CD@kG{\CD@YA{misplaced \CD@lG}\CD@SE}\def\newdiagramgrid#1#2#3{%
\CD@RC{cdgh@#1}{#2,],}
\CD@RC{cdgv@#1}{#3,],}}
\def\CD@yH{\CD@VA6 }\def\CD@OB{\CD@VA1 \global\CD@yA1
\CD@DE\CD@YF\empty}\def\CD@YF{}\def\CD@nB#1{\relax\CD@MD\edef\CD@vJ{#1}%
\begingroup\CD@rE\else\ifcase\CD@VA\ifmmode\else\CD@YG\CD@E0\fi\or\CD@cE5\or
\CD@YG\CD@F5\or\CD@YG\CD@B5\or\CD@YG\CD@B5\or\CD@YG\CD@C5\or\CD@cE7\or\CD@YG
\CD@D7\fi\fi\endgroup\xdef\CD@YF{#1}}\def\CD@pB#1#2#3#4#5{\relax\CD@MD\xdef
\CD@vJ{#4}\begingroup\ifnum\CD@VA<#1 \expandafter\CD@cE\ifcase\CD@VA0\or#2\or
#3\else#2\fi\else\ifnum\CD@VA<6 \CD@tJ\CD@YG\CD@B#2\else\CD@YG\CD@G#2\fi\fi
\endgroup\CD@DE\CD@YF\CD@vJ\ifincommdiag\let\CD@ZD#5\else\let\CD@ZD\CD@LK\fi}%
\def\CD@yI{\global\CD@yA=\ifnum\CD@VA<5 1\else2\fi\relax}\def\CD@OI{\CD@VA
\CD@yA}\def\CD@cE#1{\aftergroup\CD@VA\aftergroup#1\aftergroup\relax}\def
\let\CD@yI\relax\let\CD@OI\relax}\def\CD@FH#1#2#3#4#5{\ifincommdiag\let\CD@ZD
#5\else\xdef\CD@vJ{#4}\let\CD@ZD\CD@LK\fi}\def\CD@YG#1{\aftergroup#1%
\aftergroup\relax\CD@cE}\def\CD@B{\CD@YE\CD@S\CD@ME\CD@Q}\def\CD@G{\CD@YE{%
\CD@yC\CD@S}\CD@XE\CD@QD\CD@Q}\def\CD@F{\CD@YE{*\CD@S}\CD@RE\clubsuit\CD@Q}%
\def\CD@C{\CD@YE{\CD@S*\CD@S}\CD@RE\CD@Q\clubsuit\CD@Q}\def\CD@D{\CD@YE\CD@EC
\CD@TE\\}\def\CD@E{\CD@YE\CD@nC\CD@QE\CD@k}\def\CD@LK{\CD@YA{\CD@vJ\space
ignored \CD@dH}\CD@WE}\def\CD@FE{}\def\CD@d{\CD@YA{maps must never be enclosed
in braces}\CD@OE}\def\CD@dH{outside diagram}\def\CD@FC{\string\HonV, \string
\VonH\space and \string\HmeetV}\CD@rG\CD@ME{The way that horizontal and
vertical arrows are terminated implicitly means\CD@uG that they cannot be
mixed with each other or with \CD@FC.}\CD@rG\CD@XE{\string\pile\space is for
parallel horizontal arrows; verticals can just be put together in\CD@uG a cell%
. \CD@FC\space are not meaningful in a \string\pile.}\CD@rG\CD@RE{The
horizontal maps must point to an object, not each other (I've put in\CD@uG one
which you're unlikely to want). Use \string\pile\space if you want them
parallel.}\CD@rG\CD@TE{Parallel horizontal arrows must be in separate layers
of a \string\pile.}\CD@rG\CD@QE{Horizontal arrows may be used \CD@dH s, but
must still be in maths.}\CD@rG\CD@WE{Vertical arrows, \CD@FC\space\CD@dH s don%
't know where\CD@uG where to terminate.}\CD@rG\CD@OE{This prevents them from
stretching correctly.}\def\CD@YE#1{\CD@YA{"#1" inserted \ifx\CD@YF\empty
before \CD@vJ\else between \CD@YF\ifx\CD@YF\CD@vJ s\else\space and \CD@vJ\fi
\fi}}\count@=\year\multiply\count@12 \advance\count@\month\ifnum\count@>24391
\def
\def\CD@TJ{\CD@GB-%
9999 \let\CD@ZD\CD@XD\ifincommdiag\else\CD@cJ\ifinpile\else\skip2\z@ plus 1.5%
\CD@VK minus .5\CD@UK\skip4\skip2 \fi\fi\let\CD@kD\@fillh\CD@nF{fill@dot}{rf:%
.}}\def\Vector@Map{\CD@HK4}\def\Slant@Map{\CD@HK{\CD@EF255\else6\fi}}\def
\def\CD@HK#1#2#3#4#5#6{\CD@LC\def\CD@WK{2}\def\CD@aK{%
2}\def\CD@ZK{1}\def\CD@bK{1}\let\Horizontal@Map\CD@nI\def\CD@OG{#1}\def\CD@NI
{\CD@U#2#3#4#5#6}}\def\CD@nI{\CD@TJ\CD@JB\let\CD@ZD\CD@TD\CD@qD}\CD@tG\CD@pE
\def\cds@missives{\CD@rA}\def\CD@TD{\CD@vE\let\CD@OG\CD@OC
\CD@x\CD@zE\CD@WF\fi\setbox0\hbox{\incommdiagfalse\CD@HI}\CD@pE\CD@aD\else
\global\CD@YC\CD@bD\fi\ifvoid6 \ifvoid7 \CD@eE\fi\fi\CD@zE\else\CD@BD\global
\CD@YC\let\CD@CG\CD@IH\CD@YD\fi\else\CD@NI\CD@MI\global\CD@YC\CD@YD\fi}\def
\def\CD@U#1#2#3#4#5{\let\CD@oJ#1\let\CD@iD#2\let\CD@QG#3%
\let\CD@jD#4\let\CD@LE#5\CD@TB\ifx\CD@iD\CD@jD\CD@UB\fi}\def\CD@qD#1#2#3#4#5{%
\CD@U#1#2#3#4#5\CD@tD}\def\Vertical@Map{\CD@pB433{vertical map}\CD@cD\CD@LC
\CD@GB-9995 \let\CD@kD\@fillv\CD@nF{fill@dot}{df:.}\CD@qD}\def\break@args{%
\def\CD@tD{\CD@ZD}\CD@ZD\endgroup\aftergroup\CD@FE}\def\CD@MJ{\setbox1=\CD@oJ
\setbox5=\CD@LE\ifvoid3 \ifx\CD@QG\null\else\setbox3=\CD@QG\fi\fi\CD@@G2%
\CD@iD\CD@@G4\CD@jD}\def\CD@pF#1{\ifvoid1\else\CD@oF1#1\fi\ifvoid2\else\CD@oF
2#1\fi\ifvoid3\else\CD@oF3#1\fi\ifvoid4\else\CD@oF4#1\fi\ifvoid5\else\CD@oF5#%
1\fi} \def\CD@oF#1#2{\setbox#1\vbox{\offinterlineskip\box#1\dimen@\prevdepth
\advance\dimen@-#2\relax\setbox0\null\dp0\dimen@\ht0-\dimen@\box0}}\def\CD@@G
\CD@ZA\CD@BK{\string\HorizontalMap, \string\VerticalMap\space and
\string\DiagonalMap\CD@uG are obsolete - use \CD@lG\space to pre-define maps}%
\def\HorizontalMap#1#2#3#4#5{\CD@BK\CD@nB{old horizontal map}\CD@LC\CD@TJ\def
\CD@oJ{\CD@UH{#1}}\CD@SH\CD@iD{#2}\def\CD@QG{\CD@UH{#3}}\CD@SH\CD@jD{#4}\def
\CD@LE{\CD@UH{#5}}\CD@tD}\def\VerticalMap#1#2#3#4#5{\CD@BK\CD@pB433{vertical
map}\CD@cD\CD@LC\CD@GB-9995 \let\CD@kD\@fillv\def\CD@oJ{\CD@GG{#1}}\CD@VH
\CD@iD{#2}\def\CD@QG{\CD@GG{#3}}\CD@VH\CD@jD{#4}\def\CD@LE{\CD@GG{#5}}\CD@tD}%
\def\DiagonalMap#1#2#3#4#5{\CD@BK\CD@LC\def\CD@OG{4}\let\CD@kD\CD@qK\let
\CD@ZD\CD@YD\def\CD@WK{2}\def\CD@aK{2}\def\CD@ZK{1}\def\CD@bK{1}\def\CD@QG{%
\CD@vF{#3}}\ifPositiveGradient\let\mv\raise\def\CD@oJ{\CD@vF{#5}}\def\CD@iD{%
\CD@vF{#4}}\def\CD@jD{\CD@vF{#2}}\def\CD@LE{\CD@vF{#1}}\else\let\mv\lower\def
\CD@oJ{\CD@vF{#1}}\def\CD@iD{\CD@vF{#2}}\def\CD@jD{\CD@vF{#4}}\def\CD@LE{%
\CD@vF{#5}}\fi\CD@tD}\def\CD@aE{-}\def\CD@AD{\empty}\def\CD@SH{\CD@EG\CD@bE
\CD@aE\@fillh}\def\CD@VH{\CD@EG\CD@MK\CD@KK\@fillv}\def\CD@EG#1#2#3#4#5{\def
\CD@CH{#5}\ifx\CD@CH#2\let#4#3\else\let#4\null\ifx\CD@CH\empty\else\ifx\CD@CH
\CD@AD\else\let#4\CD@CH\fi\fi\fi}\def\CD@UH#1{\hbox{\mathsurround\z@
\offinterlineskip\def\CD@CH{#1}\ifx\CD@CH\empty\else\ifx\CD@CH\CD@AD\else
\CD@k\mkern-1.5mu{\CD@CH}\mkern-1.5mu\CD@ND\fi\fi}}\def\CD@yD#1#2{\setbox#1=%
\hbox\bgroup\setbox0=\hbox{\CD@k\labelstyle()\CD@ND}
\setbox1=\null\ht1\ht0\dp1\dp0\box1 \kern.1\CD@zC\CD@k\bgroup\labelstyle
\aftergroup\CD@LD\CD@xD}\def\CD@LD{\CD@ND\kern.1\CD@zC\egroup\CD@tD}\def
\def\CD@mJ{
\catcase\bgroup:\CD@v;\catcase\egroup:\missing@label;\catcase\space:\CD@TF;%
\tokcase[:\CD@XF;
\default:\CD@zJ;\endswitch}\def\CD@v{\let\CD@MD\CD@c\let\CD@CH}\def\CD@zJ#1{%
\let\CD@UF\egroup{\let\actually@braces@missing@around@macro@in@label\CD@ZH
\let\CD@MD\CD@xC\let\CD@UF\CD@VF#1%
\actually@braces@missing@around@macro@in@label}\CD@UF}\def
\def\missing@label
\egroup\CD@YA{missing label}\CD@PE}\def\CD@xC{\egroup\missing@label}\outer
\def\CD@ZH{}\def\CD@UF{}\def\CD@VF{\CD@wC\CD@UF}\def\CD@MD{}\def\CD@XF{\let
\CD@N\CD@xD\get@square@arg\CD@AE}\CD@rG\CD@PE{The text which has just been
read is not allowed within map labels.}\def\CD@c{\egroup\CD@YA{missing \CD@yC
\space inserted after label}\CD@PE}\def\upper@label{\CD@oD\CD@yD6}\def
\def\middle@label{%
\CD@yD3}\CD@tG\CD@yE\CD@pD\CD@oD\def\CD@iF{\ifPositiveGradient\CD@tJ
\expandafter\upper@label\else\expandafter\lower@label\fi}\def\CD@iI{%
\ifPositiveGradient\CD@tJ\expandafter\lower@label\else\expandafter
\upper@label\fi}\def\positional@{\CD@gB{labels as positional arguments are
obsolete}\CD@yE\CD@tJ\expandafter\upper@label\else\expandafter\lower@label\fi
-}\def\CD@tD{\futurelet\CD@EH\switch@arg}\def\eat@space{\afterassignment
\CD@tD\let\CD@EH= }\def\CD@TF{\afterassignment\CD@xD\let\CD@EH= }\def\CD@BC{%
\get@round@pair\CD@uD}\def\CD@uD#1#2{\def\CD@WK{#1}\def\CD@aK{#2}\CD@tD}\def
\def\CD@JJ.{\CD@sC\CD@tD}\def
\def\CD@MI{}\def\CD@@E#1,{\CD@nH#1,\begingroup\ifx\@name\CD@RD
\CD@FF\aftergroup\CD@e\fi\aftergroup\CD@jC\else\expandafter\def\expandafter
\CD@RF\expandafter{\csname\@name\endcsname}\expandafter\CD@vD\CD@RF\CD@KD\ifx
\CD@RF\empty\aftergroup\CD@pC\expandafter\aftergroup\csname\CD@FB\@name
\endcsname\expandafter\aftergroup\csname\CD@FB @\@name\endcsname\else\gdef
\CD@GE{#1}\CD@gB{\string\relax\space inserted before `[\CD@GE'}\message{(I was
trying to read this as a \CD@tA\ option.)}\aftergroup\CD@H\fi\fi\endgroup}%
\def\CD@vD#1#2\CD@KD{\def\CD@RF{#2}}\def\CD@jC{\let\CD@CH\CD@N\let\CD@N\relax
\CD@CH}\def\CD@H#1],{
\CD@jC\relax\def\CD@RF{#1}\ifx\CD@RF\empty\def\CD@RF{[\CD@GE]}%
\else\def\CD@RF{[\CD@GE,#1]}
\fi\CD@RF}\def\CD@pC#1#2{\ifx#2\CD@qK\ifx#1\CD@qK\CD@gB{option `\@name'
undefined}\else#1\fi\else\CD@FF\expandafter#2\CD@GK\CD@PK\else\CD@QK\fi\fi
\CD@DH}\CD@tG\CD@FF\CD@QK\CD@PK\def\CD@nH#1,{\CD@FF\ifx\CD@GK\CD@qK\CD@e\else
\expandafter\CD@oH\CD@GK,#1,(,),(,)[]%
\fi\fi\CD@FF\else\CD@mH#1==,\fi}\def\CD@e{\CD@gB{option `\@name' needs (x,y)
value}\CD@PK\let\@name\empty}\def\CD@mH#1=#2=#3,{\def\@name{#1}\def\CD@GK{#2}%
\def\CD@RF{#3}\ifx\CD@RF\empty\let\CD@GK\CD@qK\fi}%
\def\CD@oH#1(#2,#3)#4,(#5,#6)#7[]{\def\CD@GK{{#2}{#3}}\def\CD@RF{#1#4#5#6}%
\ifx\CD@RF\empty\def\CD@RF{#7}\ifx\CD@RF\empty\CD@e\fi\else\CD@e\fi}\def
\let\CD@N\relax\def\CD@zD#1{\ifx\CD@GK\CD@qK\CD@gB{option `\@name
' needs a value}\else#1\CD@GK\relax\fi}\def\CD@BE#1#2{\ifx\CD@GK\CD@qK#1#2%
\relax\else#1\CD@GK\relax\fi}\def\cds@@showpair#1#2{\message{x=#1,y=#2}}\def
\def\CD@DI#1{\def\CD@CH
{#1}\CD@nF{@x}{cdps@#1}\ifx\CD@CH\empty\CD@f\CD@CH{cannot be used}\else\ifx
\CD@CH\relax\CD@f\CD@CH{unknown}\else\let\CD@IK\@x\fi\fi}\def\CD@f#1#2{\CD@gB
{PostScript translator `#1' #2}}\def\CD@PH{}\def\CD@PJ{\CD@fA\edef\CD@PH{%
\noexpand\CD@KB{\@name\space ignored within maths}}}\def\diagramstyle{\CD@cJ
\let\CD@N\relax\CD@CF\CD@AE\CD@AE}\CD@tG\CD@sE
\CD@hG\CD@RC{cds@ }{}\CD@RC{cds@}{}\CD@RC
\def\cds@abut{\MapsAbut\dimen1\z@
\dimen5\z@}\def\cds@alignlabels{\CD@IA\CD@KA}\def\cds@amstex{\ifincommdiag
\CD@O\else\def\CD{\diagram[amstex]}
\fi\CD@T\catcode`\@\active}\def\cds@b{\let\CD@dB\CD@bB}\def\cds@balance{\let
\CD@hA\CD@AA}\let\cds@bottom\cds@b\def\cds@center{\cds@vcentre\cds@nobalance}%
\let\cds@centre\cds@center\def\cds@centerdisplay{\CD@HA\CD@PJ\cds@balance}%
\let\cds@centredisplay\cds@centerdisplay\def\cds@crab{\CD@BE\CD@DC{.5%
\PileSpacing}}\CD@RC{cds@crab-}{\CD@DC-.5\PileSpacing}\CD@RC{cds@crab+}{%
\def\cds@defaultsize{\CD@BE{\let\CD@QC}{3em}\CD@NJ
}\def\cds@displayoneliner{\CD@DB}\let\cds@dotted\CD@sC\def\cds@dpi{\CD@RJ{1%
truein}}\def\cds@dpm{\CD@RJ{100truecm}}\let\CD@XA\CD@qK\def\cds@eqno{\let
\CD@XA\CD@GK\let\CD@EJ\empty}\def\cds@fixed{\CD@qA}\CD@tG\CD@fE\CD@J\CD@I\def
\def\cds@gap
\CD@sI\CD@BE{\wd3=}\MapShortFall} \def
\relax\CD@gB{%
unknown grid `\CD@GK'}\else\CD@WB\fi\fi}\let\h@grid\relax\let\v@grid\relax
\def\cds@gridx{\ifx\CD@GK\CD@qK\else\cds@grid\fi\let\CD@CH\h@grid\let\h@grid
\v@grid\let\v@grid\CD@CH}\def\cds@h{\CD@zD\DiagramCellHeight}\def\cds@hcenter
\let\CD@hA\CD@aA}\let\cds@hcentre\cds@hcenter\def\cds@heads{\CD@BE{\let
\CD@sJ}\CD@sJ\CD@@J\CD@vE\else\ifx\CD@sJ\CD@eF\else\CD@MC\fi\fi}\let
\let\cds@hmiddle\cds@balance\def\cds@htriangleheight{\CD@BE
\DiagramCellHeight\DiagramCellHeight\DiagramCellWidth1.73205%
\DiagramCellHeight}\def\cds@htrianglewidth{\CD@BE\DiagramCellWidth
\DiagramCellWidth\DiagramCellHeight.57735\DiagramCellWidth}\CD@tG\CD@zE\CD@eE
\def\cds@hug{\CD@eE} \def\cds@inline{\CD@gA\let\CD@PH\empty}\def
\def\cds@labelstyle{\CD@zD{\let\labelstyle}}\def\cds@landscape{\CD@kA}\def
\let\CD@EJ\empty\def\CD@FJ{\refstepcounter{%
equation}\def\CD@XA{\hbox{\@eqnnum}}}\def\cds@LaTeXeqno{\let\CD@EJ\CD@FJ}\def
\def\cds@leftflush{\cds@flushleft\CD@J}\def
\def\cds@lowershortfall{%
\ifPositiveGradient\cds@leftshortfall\else\cds@rightshortfall\fi}\def
\def\cds@midhshaft{\CD@JA}\def\cds@midshaft{\CD@JA}\def
\def\cds@moreoptions{\CD@@A}\let\cds@nobalance
\def\cds@nohcheck{\CD@HH}\def\cds@nohug{\CD@dE} \def
\let\cds@noorigin\cds@nobalance\def
\def\cds@UO{\CD@oK\global\let\CD@n\empty}%
\def\cds@UglyObsolete{\cds@UO\let\cds@PS\empty}\def\CD@rK#1{\CD@gB{option `#1%
' renamed as `UglyObsolete'}}\def\cds@noPostScript{\CD@rK{noPostScript}}\def
\def\cds@notextflow{\CD@RB}\def\cds@noTPIC{%
\CD@CK}\def\cds@objectstyle{\CD@zD{\let\objectstyle}}\def\cds@origin{\let
\CD@hA\CD@iB}\def\cds@p{\CD@zD\PileSpacing}\let\cds@pilespacing\cds@p\def
\def\cds@portrait{\CD@jA}\def
\def\cds@PS{%
\CD@nK\global\let\CD@n\empty}\CD@GF\CD@n{\typeout{\CD@tA: try the PostScript
option for better results}}\def\cds@repositionpullbacks{\let\make@pbk\CD@fH
\let\CD@qH\CD@pH}\def\cds@righteqno{\CD@oA}\def\cds@rightshortfall{\CD@zD{%
\dimen5 }}\def\cds@ruleaxis{\CD@zD{\let\axisheight}}\def\cds@cmex{\let\CD@GG
\CD@sB\let\CD@QJ\CD@CJ}\def\cds@s{\cds@height\DiagramCellWidth
\DiagramCellHeight}\def\cds@scriptlabels{\let\labelstyle\scriptstyle}\def
\def\cds@showfirstpass{\CD@BE{\let\CD@nD}\z@}\def\cds@silent{\def\CD@KB##1{}%
\def\CD@gB##1{}}\let\cds@size\cds@s\def\cds@small{\CellSize2\CD@zC}\def
\def\cds@t{\let\CD@dB\CD@fB}\def\cds@textflow{%
\CD@SB\CD@PJ}\def\cds@thick{\let\CD@rF\tenlnw\CD@LF\CD@NC\CD@BE\MapBreadth{2%
\CD@LF}\CD@@J}\def\cds@thin{\let\CD@rF\tenln\CD@BE\MapBreadth{\CD@NC}\CD@@J}%
\def\cds@tight{\CD@WB}\let\cds@top\cds@t\def\cds@TPIC{\CD@DK}\def
\def\cds@vcenter{\let\CD@dB\CD@cB}\let\cds@vcentre
\def\cds@vtriangleheight{\CD@BE\DiagramCellHeight
\DiagramCellHeight\DiagramCellWidth.577035\DiagramCellHeight}\def
\def\cds@vmiddle{\let\CD@dB\CD@eB}%
\def\cds@w{\CD@zD\DiagramCellWidth}\let\cds@width\cds@w\def\diagram{\relax
\protect\CD@bC}\def\enddiagram{\protect\CD@SG}\def\CD@bC{\CD@g\CD@uI
\incommdiagtrue\edef\CD@wI{\the\CD@NB}\global\CD@NB\z@\boxmaxdepth\maxdimen
\everycr{}\CD@sK\everymath{}\everyhbox{}\ifx\pdfsyncstop\CD@qK\else
\pdfsyncstop\fi\CD@aC}\def\CD@aC{\CD@y\let\CD@N\CD@ZC\CD@CF\CD@AE\CD@WD}\def
\def\CD@WD{\let
\CD@EH\relax\CD@nE\CD@vE\else\CD@hK\else\CD@KB{landscape ignored without
PostScript}\CD@jA\fi\fi\fi\CD@EJ\setbox2=\vbox\bgroup\CD@JF\CD@VD}\def\CD@cH{%
\CD@nE\CD@fB\else\CD@dB\fi\CD@hA\nointerlineskip\setbox0=\null\ht0-\CD@pI\dp0%
\CD@pI\wd0\CD@kI\box0 \global\CD@QA\CD@kF\global\CD@yA\CD@XB\ifx\CD@NK\CD@qK
\global\CD@RA\CD@kF\else\global\CD@RA\CD@NK\fi\egroup\CD@zF\CD@nE\setbox2=%
\hbox to\dp2{\vrule height\wd2 depth\CD@QA width\z@\global\CD@QA\ht2\ht2\z@
\dp2\z@\wd2\z@\CD@hK\CD@tK{q 0 1 -1 0 0 0 cm}\else\global\CD@iG\CD@IK{0 1
bturn}\fi\box2\CD@gK\hss}\CD@DB\fi\ifnum\CD@yA=1 \else\CD@DB\fi\global
\@ignorefalse\CD@mE\leavevmode\fi\ifvmode\CD@TA\else\ifmmode\CD@PH\CD@GI\else
\CD@qE\CD@gA\fi\ifinner\CD@gA\fi\CD@mE\CD@GI\else\CD@sE\CD@QB\else\CD@TA\fi
\fi\fi\fi\CD@dD}\def\CD@dD{\global\CD@NB\CD@wI\relax\CD@xE\global\CD@ID\else
\aftergroup\CD@mC\fi\if@ignore\aftergroup\ignorespaces\fi\CD@wC\ignorespaces}%
\def\CD@fB{\advance\CD@pI\dimen1\relax}\def\CD@eB{\advance\CD@pI.5\dimen1%
\relax}\def\CD@bB{}\def\CD@cB{\CD@fB\advance\CD@pI\CD@YB\divide\CD@pI2
\advance\CD@pI-\axisheight\relax}\def\CD@aA{}\def\CD@iB{\CD@kF\z@}\def\CD@AA{%
\ifdim\dimen2>\CD@kF\CD@kF\dimen2 \else\dimen2\CD@kF\CD@kI\dimen0 \advance
\CD@kI\dimen2 \fi}\def\CD@QB{\skip0\z@\relax\loop\skip1\lastskip\ifdim\skip1>%
\z@\unskip\advance\skip0\skip1 \repeat\vadjust{\prevdepth\dp\strutbox\penalty
\predisplaypenalty\vskip\abovedisplayskip\CD@UA\penalty\postdisplaypenalty
\vskip\belowdisplayskip}\ifdim\skip0=\z@\else\hskip\skip0 \global\@ignoretrue
\fi}\def\CD@TA{\CD@LG\kern-\displayindent\CD@UA\CD@LG\global\@ignoretrue}\def
\z@\CD@KB{wider than the page by \the
\dimen0 }\CD@HA\fi\CD@iE\hss\else\CD@V\CD@QA\CD@nA\fi\CD@GI\hss\kern-\wd1\box
\def\CD@GI{\CD@AF\CD@@F\else\CD@SC\global\CD@hG\fi\fi\kern\CD@QA\box2 }%
\def\CD@JF{\CD@cJ\ifdim\DiagramCellHeight=-\maxdimen
\DiagramCellHeight\CD@QC\fi\ifdim\DiagramCellWidth=-\maxdimen
\DiagramCellWidth\CD@QC\fi\global\CD@XC\CD@IF\let\CD@FE\empty\let\CD@z\CD@Q
\let\overprint\CD@eH\let\CD@s\CD@rJ\let\enddiagram\CD@ED\let\\\CD@cC\let\par
\CD@jH\let\CD@MD\empty\let\switch@arg\CD@PB\let\shift\CD@iA\baselineskip
\DiagramCellHeight\lineskip\z@\lineskiplimit\z@\mathsurround\z@\tabskip\z@
\CD@OB}\def\CD@VD{\penalty-123 \begingroup\CD@jA\aftergroup\CD@K\halign
\bgroup\global\advance\CD@NB1 \vadjust{\penalty1}\global\CD@FA\z@\CD@OB\CD@j#%
#\CD@DD\CD@Q\CD@Q\CD@OI\CD@j##\CD@DD\cr}\def\CD@ED{\CD@MD\CD@GD\crcr\egroup
\global\CD@JD\endgroup}\def\CD@j{\global\advance\CD@FA1 \futurelet\CD@EH\CD@i
}\def\CD@i{\ifx\CD@EH\CD@DD\CD@tJ\hskip1sp plus 1fil \relax\let\CD@DD\relax
\CD@vI\else\hfil\CD@k\objectstyle\let\CD@FE\CD@d\fi}\def\CD@DD{\CD@MD\relax
\CD@yI\CD@vI\global\CD@QA\CD@iA\penalty-9993 \CD@ND\hfil\null\kern-2\CD@QA
\null}\def\CD@cC{\cr}\def\across#1{\span\omit\mscount=#1 \global\advance
\CD@FA\mscount\global\advance\CD@FA\m@ne\CD@sF\ifnum\mscount>2 \CD@fJ\repeat
\ignorespaces}\def\CD@fJ{\relax\span\omit\advance\mscount\m@ne}\def\CD@qJ{%
\ifincommdiag\ifx\CD@iD\@fillh\ifx\CD@jD\@fillh\ifdim\dimen3>\z@\else\ifdim
\dimen2>93\CD@@I\ifdim\dimen2>18\p@\ifdim\CD@LF>\z@\count@\CD@bJ\advance
\count@\m@ne\ifnum\count@<\z@\count@20\let\CD@aJ\CD@uJ\fi\xdef\CD@bJ{\the
\count@}\fi\fi\fi\fi\fi\fi\fi}\def\CD@cG#1{\vrule\horizhtdp width#1\dimen@
\kern2\dimen@}\def\CD@uJ{\rlap{\dimen@\CD@@I\CD@V\dimen@{.182\p@}\CD@zH
\dimen@\advance\CD@tI\dimen@\CD@cG0\CD@cG0\CD@cG2\CD@cG6\CD@cG6\CD@cG2\CD@cG0%
\CD@cG0\CD@cG2\CD@cG6\CD@cG0\CD@cG0\CD@cG2\CD@cG2\CD@cG6\CD@cG0\CD@cG0\CD@cG2%
\CD@cG6\CD@cG2\CD@cG2\CD@cG0\CD@cG0}}\def\CD@bJ{10}\def\CD@aJ{}\def\CD@XD{%
\CD@gE\CD@TB\fi\CD@x\CD@WF\CD@HI}\def\CD@x{\CD@QJ\CD@DC\CD@MJ\ifdim\CD@DC=\z@
\else\CD@pF\CD@DC\fi\ifvoid3 \setbox3=\null\ht3\CD@tI\dp3\CD@sI\else\CD@V{\ht
3}\CD@tI\CD@V{\dp3}\CD@sI\fi\dimen3=.5\wd3 \ifdim\dimen3=\z@\CD@tE\else\dimen
3-\CD@XH\fi\else\CD@TB\fi\CD@V{\dimen2}{\wd7}\CD@V{\dimen2}{\wd6}\CD@qJ
\advance\dimen2-2\dimen3 \dimen4.5\dimen2 \dimen2\dimen4 \advance\dimen2%
\CD@eJ\advance\dimen4-\CD@eJ\advance\dimen2-\wd1 \advance\dimen4-\wd5 \ifvoid
2 \else\CD@V{\ht3}{\ht2}\CD@V{\dp3}{\dp2}\CD@V{\dimen2}{\wd2}\fi\ifvoid4 \else
\CD@V{\ht3}{\ht4}\CD@V{\dp3}{\dp4}\CD@V{\dimen4}{\wd4}\fi\advance\skip2\dimen
2 \advance\skip4\dimen4 \CD@tE\advance\skip2\skip4 \dimen0\dimen5 \advance
\dimen0\wd5 \skip3-\skip4 \advance\skip3-\dimen0 \let\CD@jD\empty\else\skip3%
\z@\relax\dimen0\z@\fi}\def\CD@WF{\offinterlineskip\lineskip.2\CD@zC\ifvoid6
\else\setbox3=\vbox{\hbox to2\dimen3{\hss\box6\hss}\box3}\fi\ifvoid7 \else
\setbox3=\vtop{\box3 \hbox to2\dimen3{\hss\box7\hss}}\fi}\def\CD@HI{\kern
\dimen1 \box1 \CD@aJ\CD@iD\hskip\skip2 \kern\dimen0 \ifincommdiag\CD@jE
\penalty1\fi\kern\dimen3 \penalty\CD@GB\hskip\skip3 \null\kern-\dimen3 \else
\hskip\skip3 \fi\box3 \CD@jD\hskip\skip4 \box5 \kern\dimen5}\def\CD@MF{\ifnum
\CD@LH>\CD@TC\CD@V{\dimen1}\objectheight\CD@V{\dimen5}\objectheight\else\CD@V
{\dimen1}\objectwidth\CD@V{\dimen5}\objectwidth\fi}\def\CD@Y{\begingroup
\ifdim\dimen7=\z@\kern\dimen8 \else\ifdim\dimen6=\z@\kern\dimen9 \else\dimen5%
\dimen6 \dimen6\dimen9 \CD@KJ\dimen4\dimen2 \CD@dG{\dimen4}\dimen6\dimen5
\dimen7\dimen8 \CD@KJ\CD@iC{\dimen2}\ifdim\dimen2<\dimen4 \kern\dimen2 \else
\kern\dimen4 \fi\fi\fi\endgroup}\def\CD@jJ{\CD@JI\setbox\z@\hbox{\lower
\axisheight\hbox to\dimen2{\CD@DF\ifPositiveGradient\dimen8\ht\CD@MH\dimen9%
\CD@mI\else\dimen8\dp3 \dimen9\dimen1 \fi\else\dimen8 \ifPositiveGradient
\objectheight\else\z@\fi\dimen9\objectwidth\fi\advance\dimen8
\ifPositiveGradient-\fi\axisheight\CD@Y\unhbox\z@\CD@DF\ifPositiveGradient
\dimen8\dp3 \dimen9\dimen0 \else\dimen8\ht\CD@MH\dimen9\CD@mF\fi\else\dimen8
\ifPositiveGradient\z@\else\objectheight\fi\dimen9\objectwidth\fi\advance
\dimen8 \ifPositiveGradient\else-\fi\axisheight\CD@Y}}}\def\CD@bD{\dimen6
\CD@aK\DiagramCellHeight\dimen7 \CD@WK\DiagramCellWidth\CD@jJ
\ifPositiveGradient\advance\dimen7-\CD@ZK\DiagramCellWidth\else\dimen7 \CD@ZK
\DiagramCellWidth\dimen6\z@\fi\advance\dimen6-\CD@bK\DiagramCellHeight\CD@mK
\setbox0=\rlap{\kern-\dimen7 \lower\dimen6\box\z@}\ht0\z@\dp0\z@\raise
\axisheight\box0 }\def\CD@mK{\setbox0\hbox{\ht\z@\z@\dp\z@\z@\wd\z@\z@\CD@hK
\expandafter\CD@tK{q \CD@eK\space\CD@lK\space\CD@kK\space\CD@eK\space0 0 cm}%
\else\global\CD@iG\CD@eD{\the\CD@TC\space\ifPositiveGradient\else-\fi\the
\CD@LH\space bturn}\fi\box\z@\CD@gK}}\def\CD@vB{\advance\CD@hF-\CD@mI\CD@wJ
\CD@hF\advance\CD@wJ\CD@hI\ifvoid\CD@sH\ifdim\CD@wJ<.1em\ifnum\CD@gD=\@m\else
\CD@aG h\CD@wJ<.1em:objects overprint:\CD@FA\CD@gD\fi\fi\else\ifhbox\CD@sH
\CD@SK\else\CD@TK\fi\advance\CD@wJ\CD@mI\CD@bH{-\CD@mI}{\box\CD@sH}{\CD@wJ}%
\z@\fi\CD@hF-\CD@mF\CD@gD\CD@FA\CD@hI\z@}\def\CD@SK{\setbox\CD@sH=\hbox{%
\unhbox\CD@sH\unskip\unpenalty}\setbox\CD@tH=\hbox{\unhbox\CD@tH\unskip
\unpenalty}\setbox\CD@sH=\hbox to\CD@wJ{\CD@OA\wd\CD@sH\unhbox\CD@sH\CD@PA
\lastkern\unkern\ifdim\CD@PA=\z@\CD@UB\advance\CD@OA-\wd\CD@tH\else\CD@TB\fi
\ifnum\lastpenalty=\z@\else\CD@JA\unpenalty\fi\kern\CD@PA\ifdim\CD@hF<\CD@OA
\CD@JA\fi\ifdim\CD@hI<\wd\CD@tH\CD@JA\fi\CD@jE\CD@hI\CD@wJ\advance\CD@hI-%
\CD@OA\advance\CD@hI\wd\CD@tH\ifdim\CD@hI<2\wd\CD@tH\CD@aG h\CD@hI<2\wd\CD@tH
:arrow too short:\CD@FA\CD@gD\fi\divide\CD@hI\tw@\CD@hF\CD@wJ\advance\CD@hF-%
\CD@hI\fi\CD@tE\kern-\CD@hI\fi\hbox to\CD@hI{\unhbox\CD@tH}\CD@HG}}\CD@tG
\def\pile{\protect\CD@UJ\protect
\CD@uH}\def\CD@uH#1{\CD@l#1\CD@QD}\def\CD@UJ{\CD@nB{pile}\setbox0=\vtop
\bgroup\aftergroup\CD@lD\inpiletrue\let\CD@FE\empty\let\pile\CD@KF\let\CD@QD
\CD@PD\let\CD@GD\CD@FD\CD@yH\baselineskip.5\PileSpacing\lineskip.1\CD@zC
\relax\lineskiplimit\lineskip\mathsurround\z@\tabskip\z@\let\\\CD@wH}\def
\CD@rG\CD@NE{pile only allows one column.}%
\CD@rG\CD@UE{you left it out!}\def\CD@R{\CD@QD\CD@Q\relax\CD@YA{missing \CD@yC
\space inserted after \string\pile}\CD@NE}\def\CD@PD{\CD@MD\crcr\egroup
\egroup}\def\CD@GD{\CD@MD}\def\CD@FD{\CD@MD\relax\CD@QD\CD@YA{missing \CD@yC
\space inserted between \string\pile\space and \CD@HD}\CD@UE}\def\CD@QD{%
\CD@MD}\def\CD@lD{\vbox{\dimen1\dp0 \unvbox0 \setbox0=\lastbox\advance\dimen1%
\dp0 \nointerlineskip\box0 \nointerlineskip\setbox0=\null\dp0.5\dimen1\ht0-%
\dp0 \box0}\ifincommdiag\CD@tJ\penalty-9998 \fi\xdef\CD@YF{pile}}\def\CD@vH{%
\cr}\def\CD@wH{\noalign{\skip@\prevdepth\advance\skip@-\baselineskip
\prevdepth\skip@}}\def\CD@KF#1{#1}\def\CD@TK{\setbox\CD@sH=\vbox{\unvbox
\CD@sH\setbox1=\lastbox\setbox0=\box\voidb@x\CD@tF\setbox\CD@sH=\lastbox
\ifhbox\CD@sH\CD@rC\repeat\unvbox0 \global\CD@QA\CD@ZE}\CD@ZE\CD@QA}\def
\def\CD@gJ{\penalty7
\noindent\unhbox\CD@sH\unskip\setbox\CD@sH=\lastbox\unskip\unhbox\CD@sH
\endgraf\setbox\CD@tH=\lastbox\unskip\setbox\CD@tH=\hbox{\CD@JG\unhbox\CD@tH
\unskip\unskip\unpenalty}\ifcase\prevgraf\cd@shouldnt P\or\ifdim\CD@wJ<\wd
\CD@tH\CD@aG h\CD@wJ<\wd\CD@sH:object in pile too wide:\CD@FA\CD@gD\setbox
\CD@sH=\hbox to\CD@wJ{\hss\unhbox\CD@tH\hss}\else\setbox\CD@sH=\hbox to\CD@wJ
{\hss\kern\CD@hF\unhbox\CD@tH\kern\CD@hI\hss}\fi\or\setbox\CD@sH=\lastbox
\unskip\CD@SK\else\cd@shouldnt Q\fi\unskip\unpenalty}\def\CD@cD{\CD@MJ\ifvoid
3 \setbox3=\null\ht3\axisheight\dp3-\ht3 \dimen3.5\CD@LF\else\dimen4\dp3
\dimen3.5\wd3 \setbox3=\CD@GG{\box3}\dp3\dimen4 \ifdim\ht3=-\dp3 \else\CD@TB
\fi\fi\dimen0\dimen3 \advance\dimen0-.5\CD@LF\setbox0\null\ht0\ht3\dp0\dp3\wd
0\wd3 \ifvoid6\else\setbox6\hbox{\unhbox6\kern\dimen0\kern2pt}\dimen0\wd6 \fi
\ifvoid7\else\setbox7\hbox{\kern2pt\kern\dimen3\unhbox7}\dimen3\wd7 \fi
\setbox3\hbox{\ifvoid6\else\kern-\dimen0\unhbox6\fi\unhbox3 \ifvoid7\else
\unhbox7\kern-\dimen3\fi}\ht3\ht0\dp3\dp0\wd3\wd0 \CD@tE\dimen4=\ht\CD@MH
\advance\dimen4\dp5 \advance\dimen4\dimen1 \let\CD@jD\empty\else\dimen4\ht3
\fi\setbox0\null\ht0\dimen4 \offinterlineskip\setbox8=\vbox spread2ex{\kern
\dimen5 \box1 \CD@iD\vfill\CD@tE\else\kern\CD@eJ\fi\box0}\ht8=\z@\setbox9=%
\vtop spread2ex{\kern-\ht3 \kern-\CD@eJ\box3 \CD@jD\vfill\box5 \kern\dimen1}%
\dp9=\z@\hskip\dimen0plus.0001fil \box9 \kern-\CD@LF\box8 \CD@kE\penalty2 \fi
\CD@tE\penalty1 \fi\kern\PileSpacing\kern-\PileSpacing\kern-.5\CD@LF\penalty
\CD@GB\null\kern\dimen3}\def\CD@cI{\ifhbox\CD@VA\CD@KB{clashing verticals}\ht
\CD@MH.5\dp\CD@VA\dp\CD@MH-\ht5 \CD@yB\ht\CD@MH\z@\dp\CD@MH\z@\fi\dimen1\dp
\CD@VA\CD@xA\prevgraf\unvbox\CD@VA\CD@wA\lastpenalty\unpenalty\setbox\CD@VA=%
\null\setbox\CD@lI=\hbox{\CD@JG\unhbox\CD@lI\unskip\unpenalty\dimen0\lastkern
\unkern\unkern\unkern\kern\dimen0 \CD@HG}\setbox\CD@lF=\hbox{\unhbox\CD@lF
\dimen0\lastkern\unkern\unkern\global\CD@QA\lastkern\unkern\kern\dimen0 }%
\CD@tF\ifnum\CD@xA>4 \CD@zI\repeat\unskip\unskip\advance\CD@mF.5\wd\CD@VA
\advance\CD@mF\wd\CD@lF\advance\CD@mI.5\wd\CD@VA\advance\CD@mI\wd\CD@lI\ifnum
\CD@FA=\CD@lA\CD@OA.5\wd\CD@VA\edef\CD@NK{\the\CD@OA}\fi\setbox\CD@VA=\hbox{%
\kern-\CD@mF\box\CD@lF\unhbox\CD@VA\box\CD@lI\kern-\CD@mI\penalty\CD@wA
\penalty\CD@NB}\ht\CD@VA\dimen1 \dp\CD@VA\z@\wd\CD@VA\CD@tB\CD@vB}\def\CD@zI{%
\ifdim\wd\CD@lF<\CD@QA\setbox\CD@lF=\hbox to\CD@QA{\CD@JG\unhbox\CD@lF}\fi
\advance\CD@xA\m@ne\setbox\CD@VA=\hbox{\box\CD@lF\unhbox\CD@VA}\unskip\setbox
\CD@lF=\lastbox\setbox\CD@lF=\hbox{\unhbox\CD@lF\unskip\unpenalty\dimen0%
\lastkern\unkern\unkern\global\CD@QA\lastkern\unkern\kern\dimen0 }}\def\CD@yB
\def\CD@zB{\unvbox\CD@VA
\CD@wA\lastpenalty\unpenalty\ifdim\dimen1<\ht\CD@MH\CD@aG v\dimen1<\ht\CD@MH:%
rows overprint:\CD@NB\CD@wA\fi}\def\CD@xB{\dimen0=\ht\CD@VA\setbox\CD@VA=%
\hbox\bgroup\advance\dimen1-\ht\CD@MH\unhbox\CD@VA\CD@xA\lastpenalty
\unpenalty\CD@wA\lastpenalty\unpenalty\global\CD@RA-\lastkern\unkern\setbox0=%
\lastbox\CD@tF\setbox\CD@VA=\hbox{\box0\unhbox\CD@VA}\setbox0=\lastbox\ifhbox
0 \CD@kJ\repeat\global\CD@SA-\lastkern\unkern\global\CD@QA\CD@JK\unhbox\CD@VA
\egroup\CD@JK\CD@QA\CD@bH{\CD@SA}{\box\CD@VA}{\CD@RA}{\dimen1}}\def\CD@kJ{%
\setbox0=\hbox to\wd0\bgroup\unhbox0 \unskip\unpenalty\dimen7\lastkern\unkern
\ifnum\lastpenalty=1 \unpenalty\CD@UB\else\CD@TB\fi\ifnum\lastpenalty=2
\unpenalty\dimen2.5\dimen0\advance\dimen2-.5\dimen1\advance\dimen2-%
\axisheight\else\dimen2\z@\fi\setbox0=\lastbox\dimen6\lastkern\unkern\setbox1%
=\lastbox\setbox0=\vbox{\unvbox0 \CD@tE\kern-\dimen1 \else\ifdim\dimen2=\z@
\else\kern\dimen2 \fi\fi}\ifdim\dimen0<\ht0 \CD@aG v\dimen0<\ht0:upper part of
vertical too short:{\CD@tE\CD@NB\else\CD@wA\fi}\CD@xA\else\setbox0=\vbox to%
\dimen0{\unvbox0}\fi\setbox1=\vtop{\unvbox1}\ifdim\dimen1<\dp1 \CD@aG v\dimen
1<\dp1:lower part of vertical too short:\CD@NB\CD@wA\else\setbox1=\vtop to%
\dimen1{\ifdim\dimen2=\z@\else\kern-\dimen2 \fi\unvbox1 }\fi\box1 \kern\dimen
6 \box0 \kern\dimen7 \CD@HG\global\CD@QA\CD@JK\egroup\CD@JK\CD@QA\relax}%
\let\CD@LB
\let\CD@mA\CD@XB\newcount\CD@MB\CD@tG
\def\CD@nD{-1}\def\CD@K{\CD@t\ifnum\CD@nD<\z@\else
\begingroup\scrollmode\showboxdepth\CD@nD\showboxbreadth\maxdimen\showlists
\endgroup\fi\CD@bI\CD@zF\CD@CA=\CD@u\advance\CD@CA1 \CD@XB=\CD@CA\ifnum\CD@NB
=1 \CD@JA\fi\advance\CD@XB\CD@NB\dimen1\z@\skip0\z@\count@=\insc@unt\advance
\count@\CD@u\divide\count@2 \ifnum\CD@XB>\count@\CD@KB{The diagram has too
many rows! It can't be reformatted.}\else\CD@NG\CD@WI\fi\CD@cH}\def\CD@NG{%
\CD@NB\CD@CA\CD@uF\ifnum\CD@NB<\CD@XB\setbox\CD@NB\box\voidb@x\advance\CD@NB1%
\relax\repeat\CD@NB\CD@CA\skip\z@\z@\CD@uF\CD@GB\lastpenalty\unpenalty\ifnum
\CD@GB>\z@\CD@KE\repeat\ifnum\CD@GB=-123 \CD@tJ\unpenalty\else\cd@shouldnt D%
\fi\ifx\v@grid\relax\else\CD@NB\CD@XB\advance\CD@NB\m@ne\expandafter\CD@VJ
\v@grid\fi\CD@MB\CD@mA\CD@tB\z@\CD@XG\ifx\h@grid\relax\else\expandafter\CD@LJ
\h@grid\fi\count@\CD@XB\advance\count@\m@ne\CD@YB\ht\count@}\def\CD@KE{%
\ifcase\CD@GB\or\CD@MG\else\CD@uA-\lastpenalty\unpenalty\CD@vA\lastpenalty
\unpenalty\setbox0=\lastbox\CD@WG\fi\CD@wD}\def\CD@wD{\skip1\lastskip\unskip
\advance\skip0\skip1 \ifdim\skip1=\z@\else\expandafter\CD@wD\fi}\def\CD@MG{%
\setbox0=\lastbox\CD@pI\dp0 \advance\CD@pI\skip\z@\skip\z@\z@\advance\CD@NF
\CD@pI\CD@uE\ifnum\CD@NB>\CD@CA\CD@NF\DiagramCellHeight\CD@pI\CD@NF\advance
\CD@pI-\CD@qI\fi\fi\CD@qI\ht0 \CD@NF\CD@qI\setbox\CD@NB\hbox{\unhbox\CD@NB
\unhbox0}\dp\CD@NB\CD@pI\ht\CD@NB\CD@qI\advance\CD@NB1 }\def\CD@WG{\ifnum
\CD@uA<\z@\advance\CD@uA\CD@XB\ifnum\CD@uA<\CD@CA\CD@UG\else\CD@OA\dp\CD@uA
\CD@PA\ht\CD@uA\setbox\CD@uA\hbox{\box\z@\penalty\CD@vA\penalty\CD@GB\unhbox
\CD@uA}\dp\CD@uA\CD@OA\ht\CD@uA\CD@PA\fi\else\CD@UG\fi}\def\CD@UG{\CD@KB{%
diagonal goes outside diagram (lost)}}\def\CD@fI{\advance\CD@uA\CD@XB\ifnum
\CD@uA<\CD@CA\CD@UG\else\ifnum\CD@uA=\CD@NB\CD@VG\else\ifnum\CD@uA>\CD@NB
\cd@shouldnt M\else\CD@OA\dp\CD@uA\CD@PA\ht\CD@uA\setbox\CD@uA\hbox{\box\z@
\penalty\CD@vA\penalty\CD@GB\unhbox\CD@uA}\dp\CD@uA\CD@OA\ht\CD@uA\CD@PA\fi
\fi\fi}\def\CD@WI{\CD@AJ\setbox\CD@PC=\hbox{\CD@k A\@super f\CD@lJ f\CD@ND}%
\CD@ZE\z@\CD@JK\z@\CD@kI\z@\CD@kF\z@\CD@NB=\CD@XB\CD@NF\z@\CD@uB\z@\CD@uF
\ifnum\CD@NB>\CD@CA\advance\CD@NB\m@ne\CD@qI\ht\CD@NB\CD@pI\dp\CD@NB\advance
\CD@NF\CD@qI\CD@rI\advance\CD@uB\CD@NF\CD@KC\CD@ZI\CD@w\ht\CD@NB\CD@qI\dp
\CD@NB\CD@pI\nointerlineskip\box\CD@NB\CD@NF\CD@pI\setbox\CD@NB\null\ht\CD@NB
\CD@uB\repeat\CD@wB\nointerlineskip\box\CD@NB\CD@gG\CD@ZE\DiagramCellWidth{%
width}\CD@gG\CD@JK\DiagramCellHeight{height}\CD@VA\CD@LB\advance\CD@VA-\CD@lA
\advance\CD@VA\m@ne\advance\CD@VA\CD@mA\dimen0\wd\CD@VA\CD@tI\axisheight
\dimen1\CD@uB\advance\dimen1-\CD@YB\dimen2\CD@kI\advance\dimen2-\dimen0
\advance\CD@XB-\CD@CA\advance\CD@LB-\CD@lA}\count@\year\multiply\count@12
\iftrue\message{gone February
2031!}\repeat\fi\def\CD@wB{\CD@qI-\CD@NF\CD@pI\CD@NF\setbox\CD@MH=\null\dp
\CD@MH\CD@NF\ht\CD@MH-\CD@NF\CD@mF\z@\CD@mI\z@\CD@lA\CD@LB\advance\CD@lA-%
\CD@MB\advance\CD@lA\CD@mA\CD@FA\CD@LB\CD@VA\CD@MB\CD@sF\ifnum\CD@FA>\CD@lA
\advance\CD@FA\m@ne\advance\CD@VA\m@ne\CD@tB\wd\CD@VA\setbox\CD@FA=\box
\voidb@x\CD@yB\repeat\CD@w\ht\CD@NB\CD@qI\dp\CD@NB\CD@pI}\def\CD@gG#1#2#3{%
\ifdim#1>.01\CD@zC\CD@PA#2\relax\advance\CD@PA#1\relax\advance\CD@PA.99\CD@zC
\count@\CD@PA\divide\count@\CD@zC\CD@KB{increase cell #3 to \the\count@ em}%
\fi}\def\CD@rI{\CD@FA=\CD@LB\penalty4 \noindent\unhbox\CD@NB\CD@sF\unskip
\setbox0=\lastbox\ifhbox0 \advance\CD@FA\m@ne\setbox\CD@FA\hbox to\wd0{\null
\penalty-9990\null\unhbox0}\repeat\CD@lA\CD@FA\advance\CD@FA\CD@MB\advance
\CD@FA-\CD@mA\ifnum\CD@FA<\CD@LB\count@\CD@FA\advance\count@\m@ne\dimen0=\wd
\count@\count@\CD@MB\advance\count@\m@ne\CD@tB\wd\count@\CD@sF\ifnum\CD@FA<%
\CD@LB\CD@DJ\CD@XG\dimen0\wd\CD@FA\advance\CD@FA1 \repeat\fi\CD@sF\CD@GB
\lastpenalty\unpenalty\ifnum\CD@GB>\z@\CD@vA\lastpenalty\unpenalty\CD@VG
\repeat\endgraf\unskip\ifnum\lastpenalty=4 \unpenalty\else\cd@shouldnt S\fi}%
\def\CD@VG{\advance\CD@vA\CD@lA\advance\CD@vA\m@ne\setbox0=\lastbox\ifnum
\CD@vA<\CD@LB\setbox\CD@vA\hbox{\box0\penalty\CD@GB\unhbox\CD@vA}\else\CD@UG
\fi}\def\CD@bG{}\CD@tG\CD@uE\CD@WB\CD@VB\def\CD@DJ{\advance\dimen0\wd\CD@FA
\divide\dimen0\tw@\CD@uE\dimen0\DiagramCellWidth\else\CD@V{\dimen0}%
\DiagramCellWidth\CD@pJ\fi\advance\CD@tB\dimen0 }\def\CD@XG{\setbox\CD@MB=%
\vbox{}\dp\CD@MB=\CD@uB\wd\CD@MB\CD@tB\advance\CD@MB1 }\def\CD@LJ#1,{\def
\CD@GK{#1}\ifx\CD@GK\CD@RD\else\advance\CD@tB\CD@GK\DiagramCellWidth\CD@XG
\expandafter\CD@LJ\fi}\def\CD@VJ#1,{\def\CD@GK{#1}\ifx\CD@GK\CD@RD\else\ifnum
\CD@NB>\CD@CA\CD@NF\CD@GK\DiagramCellHeight\advance\CD@NF-\dp\CD@NB\advance
\CD@NB\m@ne\ht\CD@NB\CD@NF\fi\expandafter\CD@VJ\fi}\def\CD@pJ{\CD@wE\CD@OA
\dimen0 \advance\CD@OA-\DiagramCellWidth\ifdim\CD@OA>2\MapShortFall\CD@KB{%
badly drawn diagonals (see manual)}\let\CD@pJ\empty\fi\else\let\CD@pJ\empty
\fi}\def\CD@KC{\CD@VA\CD@mA\CD@sF\ifnum\CD@VA<\CD@MB\dimen0\dp\CD@VA\advance
\dimen0\CD@NF\dp\CD@VA\dimen0 \advance\CD@VA1 \repeat}\def\CD@bH#1#2#3#4{%
\ifnum\CD@FA<\CD@LB\CD@OA=#1\relax\setbox\CD@FA=\hbox{\setbox0=#2\dimen7=#4%
\relax\dimen8=#3\relax\ifhbox\CD@FA\unhbox\CD@FA\advance\CD@OA-\lastkern
\unkern\fi\ifdim\CD@OA=\z@\else\kern-\CD@OA\fi\raise\dimen7\box0 \kern-\dimen
8 }\ifnum\CD@FA=\CD@lA\CD@V\CD@kF\CD@OA\fi\else\cd@shouldnt O\fi}\def\CD@w{%
\setbox\CD@NB=\hbox{\CD@FA\CD@lA\CD@VA\CD@mA\CD@PA\z@\relax\CD@sF\ifnum\CD@FA
<\CD@LB\CD@tB\wd\CD@VA\relax\CD@eI\advance\CD@FA1 \advance\CD@VA1 \repeat}%
\CD@V\CD@kI{\wd\CD@NB}\wd\CD@NB\z@}\def\CD@eI{\ifhbox\CD@FA\CD@OA\CD@tB\relax
\advance\CD@OA-\CD@PA\relax\ifdim\CD@OA=\z@\else\kern\CD@OA\fi\CD@PA\CD@tB
\advance\CD@PA\wd\CD@FA\relax\unhbox\CD@FA\advance\CD@PA-\lastkern\unkern\fi}%
\def\CD@ZI{\setbox\CD@sH=\box\voidb@x\CD@VA=\CD@MB\CD@FA\CD@LB\CD@VA\CD@mA
\advance\CD@VA\CD@FA\advance\CD@VA-\CD@lA\advance\CD@VA\m@ne\CD@tB\wd\CD@VA
\count@\CD@LB\advance\count@\m@ne\CD@hF.5\wd\count@\advance\CD@hF\CD@tB\CD@A
\m@ne\CD@gD\@m\CD@sF\ifnum\CD@FA>\CD@lA\advance\CD@FA\m@ne\advance\CD@hF-%
\CD@tB\CD@PI\wd\CD@VA\CD@tB\advance\CD@hF\CD@tB\advance\CD@VA\m@ne\CD@tB\wd
\CD@VA\repeat\CD@mF\CD@kF\CD@mI-\CD@mF\CD@vB}\newcount\CD@GB\def\CD@s{}\def
\def\parskip{\cd@shouldnt{PS}}\prevdepth\z@}\newbox\CD@KG\newbox\CD@IG
\def\CD@JG{\unhcopy\CD@KG}\def\CD@HG{\unhcopy\CD@IG}\def\CD@iJ{\hbox{}%
\penalty1\nointerlineskip}\def\CD@PI{\penalty5 \noindent\setbox\CD@MH=\null
\CD@mF\z@\CD@mI\z@\ifnum\CD@FA<\CD@LB\ht\CD@MH\ht\CD@FA\dp\CD@MH\dp\CD@FA
\unhbox\CD@FA\skip0=\lastskip\unskip\else\CD@OK\skip0=\z@\fi\endgraf\ifcase
\prevgraf\cd@shouldnt Y \or\cd@shouldnt Z \or\CD@RI\or\CD@XI\else\CD@QI\fi
\unskip\setbox0=\lastbox\unskip\unskip\unpenalty\noindent\unhbox0\setbox0%
\lastbox\unpenalty\unskip\unskip\unpenalty\setbox0\lastbox\CD@tF\CD@GB
\lastpenalty\unpenalty\ifnum\CD@GB>\z@\setbox\z@\lastbox\CD@lB\repeat\endgraf
\unskip\unskip\unpenalty}\def\CD@YJ{\CD@uA\CD@XB\advance\CD@uA-\CD@NB\CD@vA
\CD@FA\advance\CD@vA-\CD@lA\advance\CD@vA1 \expandafter\message{prevgraf=\the
\prevgraf at (\the\CD@uA,\the\CD@vA)}}\def\CD@XI{\CD@CE\setbox\CD@lI=\lastbox
\setbox\CD@lI=\hbox{\unhbox\CD@lI\unskip\unpenalty}\unskip\ifdim\ht\CD@lI>\ht
\CD@PC\setbox\CD@MH=\copy\CD@lI\else\ifdim\dp\CD@lI>\dp\CD@PC\setbox\CD@MH=%
\copy\CD@lI\else\CD@FG\CD@lI\fi\fi\advance\CD@mF.5\wd\CD@lI\advance\CD@mI.5%
\wd\CD@lI\setbox\CD@lI=\hbox{\unhbox\CD@lI\CD@HG}\CD@bH\CD@mF{\box\CD@lI}%
\CD@mI\z@\CD@yB\CD@vB}\def\CD@CE{\ifnum\CD@A>0 \advance\dimen0-\CD@tB\CD@iA-.%
5\dimen0 \CD@A-\CD@A\else\CD@A0 \CD@iA\z@\fi\setbox\CD@MH=\lastbox\setbox
\CD@MH=\hbox{\unhbox\CD@MH\unskip\unskip\unpenalty\setbox0=\lastbox\global
\CD@QA\lastkern\unkern}\advance\CD@iA-.5\CD@QA\unskip\setbox\CD@MH=\null
\CD@mI\CD@iA\CD@mF-\CD@iA}\def\CD@Z{\ht\CD@MH\CD@tI\dp\CD@MH\CD@sI}\def\CD@FG
\def
\def\CD@VI{\CD@FG\CD@lI\CD@UI\setbox\CD@sH=\box
\CD@lF\setbox\CD@tH=\box\CD@lI}\def\CD@YI{\CD@FG\CD@lF\setbox\CD@lI\hbox{%
\penalty8 \unhbox\CD@lI\unskip\unpenalty\ifnum\lastpenalty=8 \else\CD@xH\fi}%
\CD@UI\setbox\CD@lF=\hbox{\unhbox\CD@lF\unskip\unpenalty\global\setbox\CD@DA=%
\lastbox}\ifdim\wd\CD@lF=\z@\else\CD@xH\fi\setbox\CD@sH=\box\CD@DA}\def\CD@xH
\def\CD@UI{\CD@yB
\ifvoid\CD@sH\else\CD@KB{Clashing horizontal arrows}\CD@mI.5\CD@hF\CD@mF-%
\CD@mI\CD@vB\CD@mI\z@\CD@mF\z@\fi\CD@hI\CD@hF\advance\CD@hI-\CD@mI\CD@hF-%
\CD@mF\CD@JC\CD@FA}\def\CD@RI{\setbox0\lastbox\unskip\CD@iA\z@\CD@Z\ifdim
\skip0>\z@\CD@tJ\CD@A0 \else\ifnum\CD@A<1 \CD@A0 \dimen0\CD@tB\fi\advance
\CD@A1 \fi}\def\VonH{\CD@MA46\VonH{.5\CD@LF}}\def\HonV{\CD@MA57\HonV{.5\CD@LF
}}\def\HmeetV{\CD@MA44\HmeetV{-\MapShortFall}}\def\CD@MA#1#2#3#4{\CD@pB34#1{%
\string#3}\CD@SD\CD@GB-999#2 \dimen0=#4\CD@tI\dimen0\advance\CD@tI\axisheight
\CD@sI\dimen0\advance\CD@sI-\axisheight\CD@CF\CD@HC\CD@ZD}\def\CD@HC#1{%
\setbox0=\hbox{\CD@k#1\CD@ND}\dimen0.5\wd0 \CD@tI\ht0 \CD@sI\dp0 \CD@ZD}\def
\def\CD@TI{\CD@GC\CD@yB}\def\CD@dI{\CD@GC\CD@vB}\def\CD@SI{\CD@GC
\CD@yB\CD@vB}\def\CD@GC{\setbox\CD@lI=\hbox{\unhbox\CD@lI}\setbox\CD@lF=\hbox
{\unhbox\CD@lF\global\setbox\CD@DA=\lastbox}\ht\CD@MH\ht\CD@DA\dp\CD@MH\dp
\CD@DA\advance\CD@mF\wd\CD@DA\advance\CD@mI\wd\CD@lI}\CD@tG
\def
\def
\let\CD@rF\tenln\def\Use@line@char#1{\hbox{#1%
\CD@rF\ifPositiveGradient\else\advance\count@64 \fi\char\count@}}\def\CD@cF{%
\Use@line@char{\count@\CD@TC\multiply\count@8\advance\count@-9\advance\count@
\CD@LH}}\def\CD@ZF{\Use@line@char{\ifcase\DiagonalChoice\CD@gF\or\CD@fF\or
\CD@fF\else\CD@gF\fi}}\def\CD@gF{\ifnum\CD@TC=\z@\count@'33 \else\count@
\CD@TC\multiply\count@\sixt@@n\advance\count@-9\advance\count@\CD@LH\advance
\count@\CD@LH\fi}\def\CD@fF{\count@'\ifcase\CD@LH55\or\ifcase\CD@TC66\or22\or
52\or61\or72\fi\or\ifcase\CD@TC66\or25\or22\or63\or52\fi\or\ifcase\CD@TC66\or
16\or36\or22\or76\fi\or\ifcase\CD@TC66\or27\or25\or67\or22\fi\fi\relax}\def
\def
\def\CD@jI#1{\hbox{\setbox0=#1%
\dimen0=\wd0 \vbox to.25\ht0{\ifPositiveGradient\vss\fi\box0
\ifPositiveGradient\else\vss\fi}\kern-.75\dimen0 }}\CD@RC{+h:>}{%
\def\CD@tC#1#2{\vbox to#1{\vss\hbox to#%
2{\hss.\hss}\vss}}\def\hfdot{\CD@tC{2\axisheight}{.5em}}%
\def\vfdot{\CD@tC{1ex}\z@}
\def\CD@bF{\hbox{\dimen0=.3\CD@zC\dimen1\dimen0 \ifnum\CD@LH>\CD@TC\CD@iC{%
\dimen1}\else\CD@dG{\dimen0}\fi\CD@tC{\dimen0}{\dimen1}}}\newarrowfiller{.}%
\def\dfdot{\CD@bF\CD@CK}\CD@RC{+f:.}{\dfdot}\CD@RC{-f%
\def\CD@@K#1{\hbox\bgroup\def\CD@CH{#1\egroup}\afterassignment
\CD@CH
\count@='}\def\lnchar{\CD@@K\CD@qF}\def\CD@dF#1{\setbox#1=\hbox{\dimen5\dimen
#1 \setbox8=\box#1 \dimen1\wd8 \count@\dimen5 \divide\count@\dimen1 \ifnum
\count@=0 \box8 \ifdim\dimen5<.95\dimen1 \CD@gB{diagonal line too short}\fi
\else\dimen3=\dimen5 \advance\dimen3-\dimen1 \divide\dimen3\count@\dimen4%
\dimen3 \CD@dG{\dimen4}\ifPositiveGradient\multiply\dimen4\m@ne\fi\dimen6%
\dimen1 \advance\dimen6-\dimen3 \loop\raise\count@\dimen4\copy8 \ifnum\count@
>0 \kern-\dimen6 \advance\count@\m@ne\repeat\fi}}\def\CD@CG#1{\CD@EF\CD@xJ{#1%
}\else\CD@dF{#1}\fi}\def\CD@IH#1{}\newdimen\objectheight\objectheight1.8ex
\newdimen\objectwidth\objectwidth1em \def\CD@YD{\dimen6=\CD@aK
\DiagramCellHeight\dimen7=\CD@WK\DiagramCellWidth\CD@KJ\ifnum\CD@LH>0 \ifnum
\CD@TC>0 \CD@aF\else\aftergroup\CD@VC\fi\else\aftergroup\CD@UC\fi}\def\CD@VC{%
\CD@YA{diagonal map is nearly vertical}\CD@NA}\def\CD@UC{\CD@YA{diagonal map
is nearly horizontal}\CD@NA}\CD@rG\CD@NA{Use an orthogonal map instead}\def
\axisheight\CD@iC{\dimen8%
}\CD@X{\dimen8}{.5\wd3}\dimen9\dp3\advance\dimen9\axisheight\CD@iC{\dimen9}%
\else\CD@CG{2}\CD@CG{4}\ifPositiveGradient\dimen2-\dimen0%
\fi\rlap{\unhbox1}\fi\raise
\def\NorthWest{\CD@BI
\CD@rB\DiagonalChoice0 }\def\NorthEast{\CD@CI\CD@rB\DiagonalChoice1 }\def
\def\SouthEast{\CD@BI\CD@qB
\DiagonalChoice2 }\def\CD@aD{\vadjust{\CD@uA\CD@FA\advance\CD@uA
\ifPositiveGradient\else-\fi\CD@ZK\relax\CD@vA\CD@NB\advance\CD@vA-\CD@bK
\relax\hbox{\advance\CD@uA\ifPositiveGradient-\fi\CD@WK\advance\CD@vA\CD@aK
\hbox{\box6 \kern\CD@DC\kern\CD@eJ\penalty1 \box7 \box\z@}\penalty\CD@uA
\penalty\CD@vA}\penalty\CD@uA\penalty\CD@vA\penalty104}}\def\CD@eH#1{\relax
\vadjust{\hbox@maths{#1}\penalty\CD@FA\penalty\CD@NB\penalty\tw@}}\def\CD@lB{%
\ifcase\CD@GB\or\or\CD@bH{.5\wd0}{\box0}{.5\wd0}\z@\or\unhbox\z@\setbox\z@
\lastbox\CD@bH{.5\wd0}{\box0}{.5\wd0}\z@\unpenalty\unpenalty\setbox\z@
\lastbox\or\CD@TG\else\advance\CD@GB-100 \ifnum\CD@GB<\z@\cd@shouldnt B\fi
\setbox\z@\hbox{\kern\CD@mF\copy\CD@MH\kern\CD@mI\CD@uA\CD@XB\advance\CD@uA-%
\CD@NB\penalty\CD@uA\CD@uA\CD@FA\advance\CD@uA-\CD@lA\penalty\CD@uA\unhbox\z@
\global\CD@yA\lastpenalty\unpenalty\global\CD@zA\lastpenalty\unpenalty}\CD@uA
-\CD@yA\CD@vA\CD@zA\CD@fI\fi}\def\CD@TG{\unhbox\z@\setbox\z@\lastbox\CD@uA
\lastpenalty\unpenalty\advance\CD@uA\CD@mA\CD@vA\CD@XB\advance\CD@vA-%
\lastpenalty\unpenalty\dimen1\lastkern\unkern\setbox3\lastbox\dimen0\lastkern
\unkern\setbox0=\hbox to\z@{\unhbox0\setbox0\lastbox\setbox7\lastbox
\unpenalty\CD@eJ\lastkern\unkern\CD@DC\lastkern\unkern\setbox6\lastbox\dimen7%
\CD@tB\advance\dimen7-\wd\CD@uA\ifdim\dimen7<\z@\CD@CI\multiply\dimen7\m@ne
\let\mv\empty\else\CD@BI\def\mv{\raise\ht1}\kern-\dimen7 \fi\ifnum\CD@vA>%
\CD@NB\dimen6\CD@uB\advance\dimen6-\ht\CD@vA\else\dimen6\z@\fi\CD@jJ\CD@mK
\setbox1\null\ht1\dimen6\wd1\dimen7 \dimen7\dimen2 \dimen6\wd1 \CD@KJ\CD@uA
\CD@LH\CD@vA\CD@TC\dimen6\ht1 \CD@KJ\setbox2\null\divide\dimen2\tw@\advance
\dimen2\CD@eJ\CD@eG{\dimen2}\wd2\dimen2 \dimen0.5\dimen7 \advance\dimen0%
\ifPositiveGradient\else-\fi\CD@eJ\CD@dG{\dimen0}\advance\dimen0-\axisheight
\ht2\dimen0 \dimen0\CD@DC\CD@eG{\dimen0}\advance\dimen0\ht2\ht2\dimen0 \dimen
0\ifPositiveGradient-\fi\CD@DC\CD@dG{\dimen0}\advance\dimen0\wd2\wd2\dimen0
\setbox4\null\dimen0 .6\CD@zC\CD@eG{\dimen0}\ht4\dimen0 \dimen0 .2\CD@zC
\CD@dG{\dimen0}\wd4\dimen0 \dimen0\wd2 \ifvoid6\else\dimen1\ht4 \advance
\dimen1\ht2 \CD@CC6+-\raise\dimen1\rlap{\ifPositiveGradient\advance\dimen0-%
\wd6\advance\dimen0-\wd4 \else\advance\dimen0\wd4 \fi\kern\dimen0\box6}\fi
\dimen0\wd2 \ifvoid7\else\dimen1\ht4 \advance\dimen1-\ht2 \CD@CC7-+\lower
\dimen1\rlap{\ifPositiveGradient\advance\dimen0\wd4 \else\advance\dimen0-\wd7%
\advance\dimen0-\wd4 \fi\kern\dimen0\box7}\fi\mv\box0\hss}\ht0\z@\dp0\z@
\CD@bH{\z@}{\box\z@}{\z@}{\axisheight}}\def\CD@CC#1#2#3{\dimen4.5\wd#1 \ifdim
\dimen4>.25\dimen7\dimen4=.25\dimen7\fi\ifdim\dimen4>\CD@zC\dimen4.4\dimen4
\advance\dimen4.6\CD@zC\fi\CD@eG{\dimen4}\dimen5\axisheight\CD@dG{\dimen5}%
\advance\dimen4-\dimen5 \dimen5\dimen4\CD@eG{\dimen5}\advance\dimen0%
\ifPositiveGradient#2\else#3\fi\dimen5 \CD@dG{\dimen4}\advance\dimen1\dimen4 }
\def\CD@eD#1{\expandafter\CD@IK{#1}}\CD@ZA\CD@EK{output is PostScript
dependent}\def\CD@SC{\CD@IK{/bturn {gsave currentpoint currentpoint translate
4 2 roll neg exch atan rotate neg exch neg exch translate } def /eturn {%
currentpoint grestore moveto} def}}\def\CD@gK{\relax\CD@hK\CD@tK{Q}\else
\CD@IK{eturn}\fi} \def\CD@OJ#1{\count@#1\relax\multiply\count@7\advance
\count@16577\divide\count@33154 }\def\CD@fD#1{\expandafter\special{#1}} \def
\CD@LF\CD@fD{pn \the\count@}\CD@fD{pa 0 0}\CD@OJ{\dimen#%
\def\CD@JI{%
\CD@KJ\begingroup\ifdim\dimen7<\dimen6 \dimen2=\dimen6 \dimen6=\dimen7 \dimen
7=\dimen2 \count@\CD@LH\CD@LH\CD@TC\CD@TC\count@\else\dimen2=\dimen7 \fi
\ifdim\dimen6>.01\p@\CD@KI\global\CD@QA\dimen0 \else\global\CD@QA\dimen7 \fi
\endgroup\dimen2\CD@QA\CD@iK\CD@lK{\ifPositiveGradient\else-\fi\dimen6}\CD@iK
\CD@kK{\ifPositiveGradient-\fi\dimen6}\CD@iK\CD@eK{\dimen7}}\def\CD@KI{\CD@hJ
\ifdim\dimen7>1.73\dimen6 \divide\dimen2 4 \multiply\CD@TC2 \else\dimen2=0.%
353553\dimen2 \advance\CD@LH-\CD@TC\multiply\CD@TC4 \fi\dimen0=4\dimen2 \CD@ZG
4\CD@ZG{-2}\CD@ZG2\CD@ZG{-2.5}}\def\CD@AI{\begingroup\count@\dimen0 \dimen2 45%
pt \divide\count@\dimen2 \ifdim\dimen0<\z@\advance\count@\m@ne\fi\ifodd
\count@\advance\count@1\CD@@A\else\CD@y\fi\advance\dimen0-\count@\dimen2
\CD@gE\multiply\dimen0\m@ne\fi\ifnum\count@<0 \multiply\count@-7 \fi\dimen3%
\dimen1 \dimen6\dimen0 \dimen7 3754936sp \ifdim\dimen0<6\p@\def\CD@OG{4000}%
\fi\CD@KJ\dimen2\dimen3\CD@dG{\dimen2}\CD@hJ\multiply\CD@TC-6 \dimen0\dimen2
\CD@ZG1\CD@ZG{0.3}\dimen1\dimen0 \dimen2\dimen3 \dimen0\dimen3 \CD@ZG3\CD@ZG{%
1.5}\CD@ZG{0.3}\divide\count@2 \CD@gE\multiply\dimen1\m@ne\fi\ifodd\count@
\dimen2\dimen1\dimen1\dimen0\dimen0-\dimen2 \fi\divide\count@2 \ifodd\count@
\multiply\dimen0\m@ne\multiply\dimen1\m@ne\fi\global\CD@QA\dimen0\global
\CD@RA\dimen1\endgroup\dimen6\CD@QA\dimen7\CD@RA}\def\CD@OC{255}\let\CD@OG
\def\CD@KJ{\begingroup\ifdim\dimen7<\dimen6 \dimen9\dimen7\dimen7\dimen
6\dimen6\dimen9\CD@@A\else\CD@y\fi\dimen2\z@\dimen3\CD@XH\dimen4\CD@XH\dimen0%
\z@\dimen8=\CD@OG\CD@XH\CD@lC\global\CD@yA\dimen\CD@gE0\else3\fi\global\CD@zA
\dimen\CD@gE3\else0\fi\endgroup\CD@LH\CD@yA\CD@TC\CD@zA}\def\CD@lC{\count@
\dimen6 \divide\count@\dimen7 \advance\dimen6-\count@\dimen7 \dimen9\dimen4
\advance\dimen9\count@\dimen0 \ifdim\dimen9>\dimen8 \CD@@C\else\CD@AC\ifdim
\dimen6>\z@\dimen9\dimen6 \dimen6\dimen7 \dimen7\dimen9 \expandafter
\expandafter\expandafter\CD@lC\fi\fi}\def\CD@@C{\ifdim\dimen0=\z@\ifdim\dimen
9<2\dimen8 \dimen0\dimen8 \fi\else\advance\dimen8-\dimen4 \divide\dimen8%
\dimen0 \ifdim\count@\CD@XH<2\dimen8 \count@\dimen8 \dimen9\dimen4 \advance
\dimen9\count@\dimen0 \CD@AC\fi\fi}\def\CD@AC{\dimen4\dimen0 \dimen0\dimen9
\advance\dimen2\count@\dimen3 \dimen9\dimen2 \dimen2\dimen3 \dimen3\dimen9 }%
\def\CD@ZG#1{\CD@dG{\dimen2}\advance\dimen0 #1\dimen2 }\def\CD@dG#1{\divide#1%
\CD@TC\multiply#1\CD@LH}\def\CD@eG#1{\divide#1\CD@vA\multiply#1\CD@uA}\def
\def\CD@hJ{\dimen6\CD@LH\CD@XH
\multiply\dimen6\CD@LH\dimen7\CD@TC\CD@XH\multiply\dimen7\CD@TC\CD@KJ}\def
\let\CD@GH
\def\CD@GH{\errorcontextlines\m@ne}\fi\ifnum\inputlineno<0 \let
\let\CD@W\empty\let\CD@mD\relax\let\CD@uI\relax\let\CD@vI\relax
\let\CD@zF\relax\message{! Why not upgrade to TeX version 3? (available since
1990)}\else\def\CD@W{ at line \number\inputlineno}\def\CD@mD{ - first occurred%
}\def\CD@uI{\edef\CD@h{\the\inputlineno}\global\let\CD@jB\CD@h}\def\CD@h{9999%
}\def\CD@vI{\xdef\CD@jB{\the\inputlineno}}\def\CD@jB{\CD@h}\def\CD@zF{\ifnum
\CD@h<\inputlineno\edef\CD@CD{\space at lines \CD@h--\the\inputlineno}\else
\edef\CD@CD{\CD@W}\fi}\fi\let\CD@CD\empty\def\CD@YA#1#2{\CD@GH\errhelp=#2%
\expandafter\errmessage{\CD@tA: #1}}\def\CD@KB#1{\begingroup\expandafter
\message{! \CD@tA: #1\CD@CD}\ifnum\CD@XB>\CD@NB\ifnum\CD@CA>\CD@NB\else\ifnum
\CD@lA>\CD@FA\else\ifnum\CD@LB>\CD@FA\advance\CD@XB-\CD@NB\advance\CD@FA-%
\CD@lA\advance\CD@FA1\relax\expandafter\message{! (error detected at row \the
\CD@XB, column \the\CD@FA, but probably caused elsewhere)}\fi\fi\fi\fi
\endgroup}\def\CD@gB#1{{\expandafter\message{\CD@tA\space Warning: #1\CD@W}}}%
\def\CD@CB#1#2{\CD@gB{#1 \string#2 is obsolete\CD@mD}}\def\CD@AB#1{\CD@CB{%
Dimension}{#1}\CD@DE#1\CD@BB\CD@BB}\def\CD@BB{\CD@OA=}\def\CD@@B#1{\CD@CB{%
Count}{#1}\CD@DE#1\CD@OH\CD@OH}\def\CD@OH{\count@=}\def\HorizontalMapLength{%
\CD@AB\HorizontalMapLength}\def\VerticalMapHeight{\CD@AB\VerticalMapHeight}%
\def\VerticalMapDepth{\CD@AB\VerticalMapDepth}\def\VerticalMapExtraHeight{%
\CD@AB\VerticalMapExtraHeight}\def\VerticalMapExtraDepth{\CD@AB
\VerticalMapExtraDepth}\def\DiagonalLineSegments{\CD@@B\DiagonalLineSegments}%
\CD@ZA\CD@KH{\CD@eF\space diagonal line and arrow font not
available}\else\let\CD@KH\relax\fi\def\CD@aG#1#2<#3:#4:#5#6{\begingroup\CD@PA
#3\relax\advance\CD@PA-#2\relax\ifdim.1em<\CD@PA\CD@uA#5\relax\CD@vA#6\relax
\ifnum\CD@uA<\CD@vA\count@\CD@vA\advance\count@-\CD@uA\CD@KB{#4 by \the\CD@PA
}\if#1v\let\CD@CH\CD@JK\edef\tmp{\the\CD@uA--\the\CD@vA,\the\CD@FA}\else
\advance\count@\count@\if#1l\advance\count@-\CD@A\else\if#1r\advance\count@
\CD@A\fi\fi\advance\CD@PA\CD@PA\let\CD@CH\CD@ZE\edef\tmp{\the\CD@NB,\the
\CD@uA--\the\CD@vA}\fi\divide\CD@PA\count@\ifdim\CD@CH<\CD@PA\global\CD@CH
\CD@PA\fi\fi\fi\endgroup}\CD@tG\CD@xE\CD@JD\CD@ID\CD@rG\CD@xI{See the message
above.}\CD@rG\CD@lH{Perhaps you've forgotten to end the diagram before
resuming the text, in\CD@uG which case some garbage may be added to the
diagram, but we should be ok now.\CD@uG Alternatively you've left a blank line
in the middle - TeX will now complain\CD@uG that the remaining \CD@S s are
misplaced - so please use comments for layout.}\CD@rG\CD@hD{You have already
closed too many brace pairs or environments; an \CD@HD\CD@uG command was (%
over)due.}\CD@rG\CD@hH{\CD@dC\space and \CD@HD\space commands must match.}%
\def\CD@jH{\ifnum\inputlineno=0 \else\expandafter\CD@iH\fi}\def\CD@iH{\CD@MD
\CD@GD\crcr\CD@YA{missing \CD@HD\space inserted before \CD@kH- type "h"}%
\CD@lH\enddiagram\CD@AG\CD@kH\par}\def\CD@AG#1{\edef\enddiagram{\noexpand
\CD@rD{#1\CD@W}}}\def\CD@rD#1{\CD@YA{\CD@HD\space(anticipated by #1) ignored}%
\CD@xI\let\enddiagram\CD@SG}\def\CD@SG{\CD@YA{misplaced \CD@HD\space ignored}%
\CD@hH}\def\CD@mC{\CD@YA{missing \CD@HD\space inserted.}\CD@hD\CD@AG{closing
group}}\ifx\DeclareOption\CD@qK\else\ifx\DeclareOption\@notprerr\else
\def\vboxtoz{\vbox to\z@}
\def\scriptaxis#1{\@scriptaxis{$\scriptstyle#1$}}
\def\ssaxis#1{\@ssaxis{$\scriptscriptstyle#1$}}
\def\@scriptaxis#1{\dimen0\axisheight\advance\dimen0-\ss@axisheight\raise
\dimen0\hbox{#1}}\def\@ssaxis#1{\dimen0\axisheight\advance\dimen0-%
\ss@axisheight\raise\dimen0\hbox{#1}}
\let\boldscriptaxis\scriptaxis
\def\boldscript#1{\hbox{$\scriptstyle#1$}}
\def\boldscriptaxis#1{\@scriptaxis{\boldmath$\scriptstyle#1$}}
\def\boldscript#1{\hbox{\boldmath$\scriptstyle#1$}}
\def\raisehook#1#2#3{\hbox{\setbox3=\hbox{#1$\scriptscriptstyle#3$}%
\dimen0\ss@axisheight
\dimen1\axisheight\advance\dimen1-\dimen0
\dimen2\ht3\advance\dimen2-\dimen0%
\advance\dimen2-0.021em\advance\dimen1 #2\dimen2%
\raise\dimen1\box3}}
\def\shifthook#1#2#3{\setbox1=\hbox{#1$\scriptscriptstyle#3$}\dimen0\wd1%
\divide\dimen0 12\CD@zH{\dimen0}
\dimen1\wd1\advance\dimen1-2\dimen0 \advance\dimen1-2\CD@oI\CD@zH{\dimen1}%
\kern#2\dimen1\box1}
\def\@cmex{\mathchar"03}
\def\make@pbk#1{\setbox\tw@\hbox to\z@{#1}\ht\tw@\z@\dp\tw@\z@\box\tw@}\def
\def\CD@qH{\kern0.11em}\def\CD@pH{\kern0%
.35em}
\def\dblvert{\def\CD@rH{\kern.5\PileSpacing}}\def\CD@rH{}
\def\SEpbk{\make@pbk{\CD@qH\CD@rH\vrule depth 2.87ex height -2.75ex width 0.%
95em \vrule height -0.66ex depth 2.87ex width 0.05em \hss}}
\def\SWpbk{\make@pbk{\hss\vrule height -0.66ex depth 2.87ex width 0.05em
\vrule depth 2.87ex height -2.75ex width 0.95em \CD@qH\CD@rH}}
\def\NEpbk{\make@pbk{\CD@qH\CD@rH\vrule depth -3.81ex height 4.00ex width 0.%
95em \vrule height 4.00ex depth -1.72ex width 0.05em \hss}}
\def\NWpbk{\make@pbk{\hss\vrule height 4.00ex depth -1.72ex width 0.05em
\vrule depth -3.81ex height 4.00ex width 0.95em \CD@qH\CD@rH}}
\def\puncture{{\setbox0\hbox{A}\vrule height.53\ht0 depth-.47\ht0 width.35\ht
0 \kern.12\ht0 \vrule height\ht0 depth-.65\ht0 width.06\ht0 \kern-.06\ht0
\vrule height.35\ht0 depth0pt width.06\ht0 \kern.12\ht0 \vrule height.53\ht0
depth-.47\ht0 width.35\ht0 }}
\def\NEclck{\overprint{\raise2.5ex\rlap{ \CD@rH$\scriptstyle\searrow$}}}
\def\NEanti{\overprint{\raise2.5ex\rlap{ \CD@rH$\scriptstyle\nwarrow$}}}
\def\NWclck{\overprint{\raise2.5ex\llap{$\scriptstyle\nearrow$ \CD@rH}}}
\def\NWanti{\overprint{\raise2.5ex\llap{$\scriptstyle\swarrow$ \CD@rH}}}
\def\SEclck{\overprint{\lower1ex\rlap{ \CD@rH$\scriptstyle\swarrow$}}}
\def\SEanti{\overprint{\lower1ex\rlap{ \CD@rH$\scriptstyle\nearrow$}}}
\def\SWclck{\overprint{\lower1ex\llap{$\scriptstyle\nwarrow$ \CD@rH}}}
\def\SWanti{\overprint{\lower1ex\llap{$\scriptstyle\searrow$ \CD@rH}}}
\def\rhvee{\mkern-10mu\greaterthan}
\def\lhvee{\lessthan\mkern-10mu}
\def\dhvee{\vboxtoz{\vss\hbox{$\vee$}\kern0pt}}
\def\uhvee{\vboxtoz{\hbox{$\wedge$}\vss}}
\def\dhlvee{\vboxtoz{\vss\hbox{$\scriptstyle\vee$}\kern0pt}}
\def\uhlvee{\vboxtoz{\hbox{$\scriptstyle\wedge$}\vss}}
\def\dhblvee{\vboxtoz{\vss\boldscript\vee\kern0pt}}
\def\uhblvee{\vboxtoz{\boldscript\wedge\vss}}
\def\rhcvee{\mkern-10mu\succ}
\def\lhcvee{\prec\mkern-10mu}
\def\dhcvee{\vboxtoz{\vss\hbox{$\curlyvee$}\kern0pt}}
\def\uhcvee{\vboxtoz{\hbox{$\curlywedge$}\vss}}
\def\rhvvee{\mkern-13mu\gg}
\def\lhvvee{\ll\mkern-13mu}
\def\dhvvee{\vboxtoz{\vss\hbox{$\vee$}\kern-.6ex\hbox{$\vee$}\kern0pt}}
\def\uhvvee{\vboxtoz{\hbox{$\wedge$}\kern-.6ex \hbox{$\wedge$}\vss}}
\def\rhtriangle{\triangleright\mkern1.2mu}
\def\lhtriangle{\triangleleft\mkern.8mu}
\def\uhtriangle{\vbox{\kern-.2ex \hbox{$\scriptscriptstyle\bigtriangleup$}%
\kern-.25ex}}
\def\dhtriangle{\vbox{\kern-.28ex \hbox{$\scriptscriptstyle\bigtriangledown$}%
\kern-.1ex}}
\def\dhblack{\vbox{\kern-.25ex\nointerlineskip\hbox{$\blacktriangledown$}}}%
\def\uhblack{\vbox{\kern-.25ex\nointerlineskip\hbox{$\blacktriangle$}}}%
\def\dhlblack{\vbox{\kern-.25ex\nointerlineskip\hbox{$\scriptstyle
\blacktriangledown$}}}
\def\uhlblack{\vbox{\kern-.25ex\nointerlineskip\hbox{$\scriptstyle
\blacktriangle$}}}
\uhblack\newarrowhead{littleblack}{\mkern-1mu%
\scriptaxis\blacktriangleright}{\scriptaxis\blacktriangleleft\mkern-2mu}%
\def\rhla{\hbox{\setbox0=\lnchar55\dimen0=\wd0\kern-.6\dimen0\ht0\z@\raise
\axisheight\box0\kern.1\dimen0}}
\def\lhla{\hbox{\setbox0=\lnchar33\dimen0=\wd0\kern.05\dimen0\ht0\z@\raise
\axisheight\box0\kern-.5\dimen0}}
\def\dhla{\vboxtoz{\vss\rlap{\lnchar77}}}
\def\uhla{\vboxtoz{\setbox0=\lnchar66 \wd0\z@\kern-.15\ht0\box0\vss}}
\def\lhlala{\lhla\kern.3em\lhla}
\def\rhlala{\rhla\kern.3em\rhla}
\def\uhlala{\hbox{\uhla\raise-.6ex\uhla}}
\def\dhlala{\hbox{\dhla\lower-.6ex\dhla}}
\def\hhO{\scriptaxis\bigcirc\mkern.4mu} \def\hho{{\circ}\mkern1.2mu}%
\hhO\hhO{\scriptstyle\bigcirc}{\scriptstyle\bigcirc}
\def\rhtimes{\mkern-5mu{\times}\mkern-.8mu}\def\lhtimes{\mkern-.8mu{\times}%
\mkern-5mu}\def\uhtimes{\setbox0=\hbox{$\times$}\ht0\axisheight\dp0-\ht0%
\lower\ht0\box0 }\def\dhtimes{\setbox0=\hbox{$\times$}\ht0\axisheight\box0 }%
\Rightarrow\Leftarrow{\@cmex7F}{\@cmex7E}
\def\twoheaddownarrow{\rlap{$\downarrow$}\raise-.5ex\hbox{$\downarrow$}}
\def\twoheaduparrow{\rlap{$\uparrow$}\raise.5ex\hbox{$\uparrow$}}
\def\ltvee{\mkern-1mu{\lessthan}\mkern.4mu}
\else\newarrowtail{%
boldlittlevee}{\boldscriptaxis\greaterthan}{\mkern-1mu\boldscriptaxis
\lessthan}{\boldscript\vee}{\boldscript\wedge}\fi
\def\rttriangle{\mkern1.2mu\triangleright}
\uhblack\newarrowtail{littleblack}{\scriptaxis
\blacktriangleright\mkern-2mu}{\mkern-1mu\scriptaxis\blacktriangleleft}%
\def\rtla{\hbox{\setbox0=\lnchar55\dimen0=\wd0\kern-.5\dimen0\ht0\z@\raise
\axisheight\box0\kern-.2\dimen0}}
\def\ltla{\hbox{\setbox0=\lnchar33\dimen0=\wd0\kern-.15\dimen0\ht0\z@\raise
\axisheight\box0\kern-.5\dimen0}}
\def\dtla{\vbox{\setbox0=\rlap{\lnchar77}\dimen0=\ht0\kern-.7\dimen0\box0%
\kern-.1\dimen0}}
\def\utla{\vbox{\setbox0=\rlap{\lnchar66}\dimen0=\ht0\kern-.1\dimen0\box0%
\kern-.6\dimen0}}
\def\rtvvee{\gg\mkern-3mu}
\def\ltvvee{\mkern-3mu\ll}
\def\dtvvee{\vbox{\hbox{$\vee$}\kern-.6ex \hbox{$\vee$}\vss}}
\def\utvvee{\vbox{\vss\hbox{$\wedge$}\kern-.6ex \hbox{$\wedge$}\kern\z@}}
\def\ltlala{\ltla\kern.3em\ltla}
\def\rtlala{\rtla\kern.3em\rtla}
\def\utlala{\hbox{\utla\raise-.6ex\utla}}
\def\dtlala{\hbox{\dtla\lower-.6ex\dtla}}
\def\utbar{\vrule height 0.093ex depth0pt width 0.4em}
\let\dtbar\utbar
\def\rtbar{\mkern1.5mu\vrule height 1.1ex depth.06ex width .04em\mkern1.5mu}%
\let\ltbar\rtbar
\def\rthooka{\raisehook{}+\subset\mkern-1mu}
\def\lthooka{\mkern-1mu\raisehook{}+\supset}
\def\rthookb{\raisehook{}-\subset\mkern-2mu}
\def\lthookb{\mkern-1mu\raisehook{}-\supset}
\def\dthooka{\shifthook{}+\cap}
\def\dthookb{\shifthook{}-\cap}
\def\uthooka{\shifthook{}+\cup}
\def\uthookb{\shifthook{}-\cup}
\uthooka\newarrowtail{hookb}%
\CD@qK\newarrowtail{boldhooka}\rthooka\lthooka\dthooka\uthooka
\uthookb\newarrowtail{%
boldhook}\rthooka\lthooka\dthookb\uthooka\else\def\rtbhooka{\raisehook
\boldmath+\subset\mkern-1mu}
\def\ltbhooka{\mkern-1mu\raisehook\boldmath+\supset}
\def\rtbhookb{\raisehook\boldmath-\subset\mkern-2mu}
\def\ltbhookb{\mkern-1mu\raisehook\boldmath-\supset}
\def\dtbhooka{\shifthook\boldmath+\cap}
\def\dtbhookb{\shifthook\boldmath-\cap}
\def\utbhooka{\shifthook\boldmath+\cup}
\def\utbhookb{\shifthook\boldmath-\cup}
\utbhooka\newarrowtail{%
boldhookb}\rtbhookb\ltbhookb\dtbhookb\utbhookb\newarrowtail{boldhook}%
\def\dtsqhooka{\shifthook{}+\sqcap}
\def\ltsqhooka{\mkern-1mu\raisehook{}+\sqsupset}
\def\rtsqhooka{\raisehook{}+\sqsubset\mkern-1mu}
\def\utsqhooka{\shifthook{}+\sqcup}
\uthooka\newarrowtail{C}\rthooka
\hhO\hhO{\scriptstyle\bigcirc}{\scriptstyle\bigcirc}
\Leftarrow\Rightarrow{\@cmex7E}{\@cmex7F}
\def\vfthree{\mid\!\!\!\mid\!\!\!\mid}
\def\vfdashstrut{\vrule width0pt height1.3ex depth0.7ex}
\def\vfthedash{\vrule width\CD@LF height0.6ex depth 0pt}
\def\hfthedash{\CD@AJ\vrule\horizhtdp width 0.26em}
\def\hfdash{\mkern5.5mu\hfthedash\mkern5.5mu}
\def\vfdash{\vfdashstrut\vfthedash}
\def\rightBrace{\d@brace[thick,cmex]}
\def\leftBrace{\u@brace[thick,cmex]}
\def\upperBrace{\r@brace[thick,cmex]}
\def\lowerBrace{\l@brace[thick,cmex]}
\def\rightParenth{\d@parenth[thick,cmex]}
\def\leftParenth{\u@parenth[thick,cmex]}
\def\upperParenth{\r@parenth[thick,cmex]}
\def\lowerParenth{\l@parenth[thick,cmex]}
\let\hEq\rEq
\let\vEq\uEq
\def\labelstyle{
\ifincommdiag
\textstyle
\else
\scriptstyle
\fi}
\let\objectstyle\displaystyle
\CD@hK\message{| running in pdf mode -- diagonal arrows will work
automatically |}\else\message{| >>>>>>>> POSTSCRIPT MODE (DVIPS) IS NOW THE
DEFAULT <<<<<<<<<<<<|}\message{|(DVI mode has not been supported since 1992
\else\message{| >>>>>>>> USING UGLY
OBSOLETE DVI CODE - PLEASE STOP <<<<<<<<<<<<|}\message{|(DVI mode has not been
\let\@wraptoccontribs\wraptoccontribs
\DeclareMathOperator\C{\mathbb C}
\DeclareMathOperator\Z{\mathbb Z}
\newtheorem{theorem}{Theorem}[section]
\newtheorem{lemma}[theorem]{Lemma}
\newtheorem{prop}[theorem]{Proposition}
\newtheorem{claim}[theorem]{Claim}
\theoremstyle{definition}
\newtheorem{definition}[theorem]{Definition}
\newtheorem{case}{Case}
\theoremstyle{remark}
\newtheorem{remark}[theorem]{Remark}
\newtheorem{conjecture}[theorem]{Conjecture}
\numberwithin{equation}{section}
\newcommand{\dontprint}[1]\relax
\newcommand{\Id}{\operatorname{Id}}
\newcommand{\Gr}{\operatorname{Gr}}
\newcommand{\sm}{\setminus}
\newcommand{\sms}{\smallskip}
\newcommand{\mC}{{\mathbb C}}
\newcommand{\bo}{\omega}
\newcommand{\bl}{\lambda}
\newcommand{\bL}{\Lambda}
\newcommand{\mcF}{\mathcal F}
\newcommand{\mcG}{\mathcal G}
\newcommand{\mcL}{\mathcal L}
\newcommand{\mcS}{\mathcal S}
\newcommand{\mcW}{\mathcal W}
\newcommand{\mcY}{\mathcal Y}
\newcommand{\fh}{\mathfrak h}
\newcommand{\fn}{\mathfrak n}
\newcommand{\ft}{\mathfrak t}
\newcommand{\fz}{\mathfrak z}
\newcommand{\ti}{\tilde}
\newcommand{\ad}{\operatorname{ad}}
\newcommand{\ssl}{\mathfrak sl}
\newcommand{\can}{\operatorname{can}}
\newcommand{\fg}{{\frak g}}
\newcommand{\fb}{{\frak b}}
\newcommand{\Ad}{\operatorname{Ad}}
\newcommand{\De}{\Delta}
\newcommand{\La}{\Lambda}
\newcommand{\Ga}{\Gamma}
\newcommand{\Aut}{\operatorname{Aut}}
\newcommand{\und}{\underline}
\newcommand{\Pic}{\operatorname{Pic}}
\newcommand{\hra}{\hookrightarrow}
\renewcommand{\P}{{\mathbb P}}
\newcommand{\A}{{\mathbb A}}
\newcommand{\wt}{\widetilde}
\newcommand{\ot}{\otimes}
\newcommand{\fu}{{\mathfrak u}}
\newcommand{\Hom}{\operatorname{Hom}}
\newcommand{\HH}{{\mathcal H}}
\newcommand{\XX}{{\mathcal X}}
\newcommand{\YY}{{\mathcal Y}}
\newcommand{\CC}{{\mathcal C}}
\newcommand{\EE}{{\mathcal E}}
\renewcommand{\SS}{{\mathcal S}}
\newcommand{\FF}{{\mathcal F}}
\newcommand{\GG}{{\mathcal G}}
\newcommand{\LL}{{\mathcal L}}
\newcommand{\MM}{{\mathcal M}}
\newcommand{\OO}{{\mathcal O}}
\newcommand{\UU}{{\mathcal U}}
\newcommand{\WW}{{\mathcal W}}
\newcommand{\si}{\sigma}
\newcommand{\de}{\delta}
\newcommand{\sub}{\subset}
\newcommand{\Spec}{\operatorname{Spec}}
\newcommand{\PGL}{\operatorname{PGL}}
\newcommand{\ov}{\overline}
\newcommand{\om}{\omega}
\newcommand{\la}{\lambda}
\renewcommand{\a}{\alpha}
\renewcommand{\b}{\beta}
\newcommand{\id}{\operatorname{id}}
\newcommand{\GL}{\operatorname{GL}}
\newcommand{\SO}{\operatorname{SO}}
\newcommand{\tot}{\operatorname{tot}}
\newcommand{\G}{{\mathbb G}}
\newcommand{\ga}{\gamma}
\newcommand{\lan}{\langle}
\newcommand{\ran}{\rangle}
\newcommand{\triv}{{\operatorname{triv}}}
\newcommand{\SL}{{\operatorname{SL}}}
\newcommand{\End}{{\operatorname{End}}}
\newcommand{\Bun}{{\operatorname{Bun}}}
\newcommand{\cusp}{{\operatorname{cusp}}}
\newcommand{\fm}{{\mathfrak m}}
\newcommand{\eps}{\epsilon}
\newcommand{\SSet}{{\mathbb S}et}
\newcommand{\bSet}{{\bf S}et}
\newcommand{\VVect}{{\mathbb V}ect}
\newcommand{\Pro}{{\operatorname{Pro}}}
\newcommand{\Ind}{{\operatorname{Ind}}}
\renewcommand{\H}{{\mathbb H}}
\newcommand{\Coinv}{{\operatorname{Coinv}}}
\newcommand{\Rep}{{\operatorname{Rep}}}
\newcommand{\Quot}{{\operatorname{Quot}}}
\title[Hecke operators]{Hecke operators for curves over non-archimedean local fields and related finite rings}
\author{Alexander Braverman}
\author{David Kazhdan}
\author{Alexander Polishchuk}
\thanks{D.K. is partially supported by the ERC grant No 101142781.
A.P. is partially supported by the NSF grants DMS-2001224, 
NSF grant DMS-2349388, by the Simons Travel grant MPS-TSM-00002745,
and within the framework of the HSE University Basic Research Program}
\address{Deparment of Mathematics, University of Toronto, Ontario, Canada}
\email{braval@math.toronto.edu}
\address{Einstein Institute of Mathematics,
The Hebrew University of Jerusalem,
Jerusalem 91904, Israel}
\email{kazhdan@math.huji.ac.il}
\address{
    Department of Mathematics, 
    University of Oregon,     
    Eugene, OR 97403, USA; National Research University Higher School of Economics
  }
  \email{apolish@uoregon.edu}
\address{Department of Mathematics,University of Oregon,     
    Eugene, OR 97403, USA}
\email{kwong9@uoregon.edu}
\begin{document}

\begin{abstract} We study Hecke operators associated with curves over a non-archimedean local field $K$ and over the rings $O/\fm^N$, where
$O\sub K$ is the ring of integers. Our main result is commutativity of a certain ``small" local Hecke algebra over $O/\fm^N$, associated with
a connected split reductive group $G$ such that $[G,G]$ is simply connected. The proof uses a Hecke algebra associated with $G(K(\!(t)\!))$
and a global argument involving $G$-bundles on curves. 
\end{abstract}

\maketitle

\section{Introduction}

Let $C$ be a smooth proper curve over a non-archimedean local field $K$.
In this paper we study Hecke operators on certain vector spaces related to 
the moduli space of $G$-bundles on $C$ and over the related curves over finite rings $O/\fm^n$,
under the assumption of existence of a smooth model $C_O$ over the ring of integers $O\sub K$.

We refer to \cite{BK} for a survey of conjectures and approaches to the analog of the Langlands program in the
case of curves over local fields, and to \cite{EFK1}, \cite{EFK2} for more precise conjectures and results in the archimedean case.

For a connected split reductive group $G$ we consider the moduli stack $\und{\Bun}_G$ of $G$-bundles over $C$.
Applying the construction of \cite{GK2} we consider the Schwartz space 
$$\WW:=\SS(\Bun_G,|\om|^{1/2})$$ 
of half-densities on $\Bun_G=\und{\Bun}_G(K)$. Hecke operators associated with points of $C$ generate a commutative algebra $H(C)$ of endomorphisms of the
$\C$-vector space $\WW$ (see Theorem \ref{HC-comm-thm}).

\begin{definition}
(1) For a homomorphism $ s:H (C)\to \mC $ we define 
 $$\mcW _s (C) = \{w\in \mcW (C) |\ hw= s(h)w\}.$$ 
\item 

\noindent
(2) The {\it spectrum} $S(C)$ of $H (C) $
is the set of homomorphisms $ s:H (C)\to \mC $ such that $ \mcW _s (C)\neq \{0\} $. 
\end{definition}

\begin{conjecture}
$\bigoplus _{s \in S (C)} \mcW _s  (C) = \mcW (C) $.
\end{conjecture}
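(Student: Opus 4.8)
The statement is a spectral‑decomposition assertion, and the natural plan is to realize $H(C)$ as a commutative algebra of normal operators on a Hilbert‑space completion of $\mcW(C)$, establish a compactness property, and then descend the resulting orthogonal eigen‑decomposition back to the Schwartz space. First, half‑densities pair to densities, so $\langle\phi,\psi\rangle:=\int_{\Bun_G(K)}\phi\,\ov\psi$ is a positive‑definite Hermitian form on $\mcW(C)=\SS(\Bun_G,|\om|^{1/2})$; let $\HH$ be its completion, an $L^2$‑space of half‑densities. Using the self‑duality of the Hecke correspondences (each is self‑dual up to replacing a representation $V$ of the dual group by $V^*$), I would show that every Hecke operator extends to a bounded operator on $\HH$, that the resulting $*$‑algebra $\ov{H(C)}\sub B(\HH)$ stays commutative — commutativity on $\mcW(C)$ is Theorem \ref{HC-comm-thm}, and it persists on $\HH$ by density — and that it is generated by normal operators. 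This reduces the problem to the spectral theory of the commutative von Neumann algebra generated by $\ov{H(C)}$.

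The crux is to show that $\HH$ decomposes \emph{discretely} under $\ov{H(C)}$. For this I would prove that a suitable nonzero $h\in H(C)$ — for instance a Hecke operator at a single point $x\in C$, averaged over the modifications of a fixed bounded type — acts on $\HH$ as a \emph{compact} operator. Concretely $h$ is given by an integral kernel $K_h$ on $\Bun_G(K)\times\Bun_G(K)$ supported on the image of the Hecke correspondence $\Hecke_x$; one analyzes the fibers of $\Hecke_x\to\Bun_G\times\Bun_G$ and bounds the relevant volumes to conclude that $K_h$ is square‑integrable, i.e. $h$ is Hilbert–Schmidt, hence compact. Here the hypothesis that $[G,G]$ is simple and simply connected, together with the geometry of the smooth model $C_O$, should be used to control the unstable locus and the behaviour of $\Bun_G(K)$ at infinity — this is what makes $K_h$ integrable despite the non‑compactness of $\Bun_G(K)$.

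Granting this, the spectral theorem for a commuting family of normal operators containing a compact one yields an orthogonal Hilbert‑space decomposition $\HH=\widehat{\bigoplus}_{s}\HH_s$ into joint eigenspaces indexed by a countable set of characters $s\colon\ov{H(C)}\to\mC$, with $\HH_s$ finite‑dimensional whenever some compact $h\in H(C)$ has $s(h)\neq 0$ (one varies $x$ to see that, apart from a possible augmentation character whose eigenspace one shows separately to vanish, this always happens). It remains to show each $\HH_s$ lies in $\mcW(C)$ and that the decomposition is an \emph{algebraic} direct sum on $\mcW(C)$. The first point is a regularity statement: an $L^2$ half‑density fixed up to a scalar by the Hecke operators inherits the smoothness (local constancy) of the Hecke kernels and, via the eigenvalue equations at varying points, acquires the decay forcing membership in $\SS(\Bun_G,|\om|^{1/2})$, so that $\HH_s=\mcW_s(C)$. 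For the second point one shows that $H(C)$ preserves $\mcW(C)$ with enough uniformity that for $w\in\mcW(C)$ the tail of its eigen‑expansion is again a Schwartz half‑density in the $L^2$‑closure of $\mcW(C)$; combined with the finiteness coming from compactness of the relevant $h$ (a compactly supported $w$ meets only finitely many $\HH_s$ with $s(h)\neq 0$), this gives $w\in\bigoplus_{s}\mcW_s(C)$, and $S(C)$ is then exactly the support of the decomposition.

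I expect the serious difficulty to be the middle step — the Hilbert–Schmidt/compactness estimate for Hecke operators — since it is precisely there that the non‑compactness of $\Bun_G(K)$ (degrees, unstable bundles, positive‑dimensional automorphism loci) must be tamed; a global input about $G$‑bundles on the curve over $O$, or a reduction to the finite‑ring truncations $O/\fm^N$ where the analogous spaces are combinatorially finite and one can pass to a limit, is likely needed. A secondary difficulty is the clean passage from the $L^2$ eigen‑decomposition to an honest direct sum of subspaces of $\mcW(C)$, i.e. ruling out infinite eigen‑expansions of compactly supported half‑densities whose tails fail to be Schwartz.
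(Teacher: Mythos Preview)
The statement you are attempting to prove is stated in the paper as a \emph{conjecture}, not a theorem; the paper offers no proof of it and does not claim one. There is therefore no ``paper's own proof'' to compare against --- if your argument were complete it would resolve an open problem that the paper explicitly leaves open.

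As for the substance of your proposal: the overall architecture (complete $\mcW(C)$ to an $L^2$-space, show the Hecke operators are normal and that some nonzero element is compact, invoke the spectral theorem, then prove a regularity statement to descend eigenfunctions back to the Schwartz space) is a reasonable strategy, and is indeed the shape of what is done in the archimedean analogue in \cite{EFK1}, \cite{EFK2}. But the two steps you yourself flag as difficult are genuine gaps, not technicalities. The Hilbert--Schmidt/compactness estimate for Hecke operators on $\Bun_G(K)$ over a non-archimedean local field is not established anywhere in the paper and is not known in this generality; the non-compactness of $\Bun_G(K)$ (unstable strata, positive-dimensional automorphism groups, infinitely many components) makes square-integrability of the Hecke kernel a serious issue that your sketch does not address. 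Likewise, the regularity step --- that an $L^2$ joint eigenfunction is automatically a Schwartz half-density, and that the eigen-expansion of a compactly supported $w$ is actually a finite sum --- is asserted but not argued; in related settings this is where most of the work lies. Finally, note that the conjecture as stated carries no hypothesis that $[G,G]$ be simple and simply connected; that assumption appears in the paper only for Theorem~\ref{small-hecke-thm}, so invoking it here would already be proving a weaker statement than the one conjectured.
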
 

\begin{remark} In the archimedean case, the analogs of Hecke operators could also be defined, and the
spectrum $S(C)$ has a conjectural description in terms of $G^\vee$-opers on $C$ (see \cite{EFK1}, \cite{EFK2}).
\end{remark}


In the case when there exists a smooth proper model $C_O$ over $O$ one can look for an
approach to the above conjecture through the analysis of similar objects for $C_O$ and its reductions
over $O/\fm^N$, where $\fm\sub O$ is the maximal ideal.
From now on we fix a smooth proper model $C_O$ over $O$.

Let $\Bun^O_G\sub \Bun_G$ be the open subgroupoid of
$G$-bundles on $C$ for which there exists an extension to $C_O$ (not to be confused with the groupoid $\Bun_G(O)$ of $G$-bundles on $C_O$).
We show that in the case when $[G,G]$ is 
simply connected, 
$\Bun^O_G$ coincides with the subgroupoid of generically trivial $G$-bundles (see Lemma \ref{gen-tr-O-lem}).
We also show that all Schwartz half-densities supported on $\Bun^O_G$ come from
smooth functions on $\Bun_G(O)$ and therefore depend on the reduction modulo some power of the maximal ideal $\fm^N\sub O$
(see Prop.\ \ref{adm-surj-prop}). 

We then consider the reductions $C_N$ of $C$ over $O/\fm^N$. As in \cite{BKP}, we consider
a {\it big local Hecke algebra} (noncommutative for $N>1$), defined in terms of distributions on the local group $G(O/\fm^N(\!(t)\!))$. 
For every point $v\in C_N(O/\fm^N)$, there is an action of this Hecke algebra on the space $\SS(\Bun_G(O/\fm^N))$ of finitely supported functions.
In \cite{BKP} we studied this action in the context of automorphic representations.
In this paper we define a {\it small local Hecke algebra} $\HH^{sm}_{G,O/\fm^N[\![t]\!]}$ (a subalgebra in the big Hecke algebra). Our main result,
generalizing \cite[Thm.\ 2.6]{BKP}, is that this small Hecke algebra is commutative 
provided $[G,G]$ is simply connected, and the characteristic of $O/\fm$ is sufficiently large (see Theorem \ref{small-hecke-thm}).

Due to the compatibility between Hecke operators over $K$ and $O/\fm^N$ (see Sec.\ \ref{hecke-K-O-sec}), one can therefore reduce the construction of $H(C)$-eigenvectors
in $\WW$ to the similar problem for the curves $C_N$ over $O/\fm^N$. The latter problem is much closer to the classical Langlands program for curves over finite fields
(see \cite{BKP} for some partial results).

The key tool we use in this paper is the theory of representations of the ind-pro-group $\G:=G(K(\!(t)\!))$ in pro-vector spaces developed in \cite{GK1}, \cite{GK2}.
More precisely, similarly to \cite[Sec.\ 3]{GK2}, we realize local Hecke algebras as endomorphisms of functors of $G[\![t]\!]$-coinvariants.
Such an interpretation allows to define and analyze actions of these algebras on spaces related to $G$-bundles.

\begin{remark}
The local Hecke algebras we consider in this paper are associated with the embedding of groups $G(K[\![t]\!])\sub G(K(\!(t)\!))$ (resp., $G(O/\fm^N[\![t]\!])\sub G(O/\fm^N(\!(t)\!))$,
in a version over $O/\fm^N$). One can exchange the roles of the uniformizer in $O$ and of formal variable $t$, and consider instead the pair $G(O(\!(t)\!))\sub G(K(\!(t)\!))$ 
(resp., $G(O[t]/(t^N))\sub G(K[t]/(t^N))$). For example, the work \cite{K-YD} (and most of \cite{BK-Hecke}) deals with the Hecke algebras coming from pairs of the second kind.

In the functional case $K=k(\!(x)\!)$, when $O=k[\![x]\!]$ and $O/\fm^N=k[x]/(x^N)$, we can identify the pair of the first kind associated
to $K$ with the pair of the second kind associated to $K'=k(\!(t)\!)$ (with $O'=k[\![t]\!]$):
$$O/\fm^N(\!(t)\!)=K'[x]/(x^N), \ \ O/\fm^N[\![t]\!]=O'[x]/(x^N).$$

We conjecture that in the case when the characteristic of $K$ is zero, small Hecke algebras associated with pairs $G(O[t]/(t^N))\sub G(K[t]/(t^N))$ are still commutative
but we do not address this problem in this paper (the case $N=2$ is proved in \cite{K-YD}).
\end{remark}

The paper is organized as follows. In Sec.\ \ref{geom-sec} we collect some general results about Schwartz spaces of varieties and stacks over $K$ and over $O$.
In Sec.\ \ref{hecke-sec} we discuss local Hecke algebras related to the loop group of $G$ over $K$ and over related rings.
First, we discuss the Hecke algebra over $K$, denoted by $\HH(\hat{\G},\H)_c$ (where $c$ is the level), in the framework of $G(K(\!(t)\!))$-representations in pro-vector spaces.
In Sec.\ \ref{crit-level-sec} we recall the commutative subalgebra in the Hecke algebra
$\HH(\G,\H)_{crit}$ at the critical level constructed in \cite{BK-Hecke}. 
In Sec.\ \ref{fin-hecke-sec} we consider versions of this algebra 
over $O$ and $O/\fm^N$ denoted by $\HH(\G_O,\H_O)$ and $\HH(\G_N,\H_N)$, respectively.
We show that the algebra $\HH(\G_N,\H_N)$ is isomorphic to 
the algebra of $G(O/\fm^N[\![t]\!])$-biinvariant distributions  with compact support on $G(O/\fm^N(\!(t)\!))$.
In Sec.\ \ref{hecke-hom-sec} we define algebra homomorphisms between the Hecke algebras,
$$\HH(\hat{\G},\H)_c\to \HH(G_O,\H_O)\to \HH(\G_N,\H_N),$$
and show their compatibility with the action on $G[\![t]\!]$-coinvariants. 
In Sec.\ \ref{coord-free-sec} we discuss coordinate-free versions of our local Hecke algebras
and define the small Hecke algebra $\HH^{sm}_{G,O/\fm^N[\![t]\!]}$.

In Sec.\ \ref{G-bun-sec} we present auxiliary results on $G$-bundles.
In Sec.\ \ref{nice-G-sec} 
we introduce 
the notion of a {\it nice $G$-bundle} on a curve with a fixed point $p$: these are $G$-bundles for which we can control automorphisms over $C-p$
with a fixed order $n$ of pole at $p$. In the appendix we prove existence of curves over arbitrary fields (with a mild restriction on the characteristic)
with such nice $G$-bundles for arbitrarily large $n$.
In Sec.\ \ref{gen-triv-O-sec} we relate generic triviality of $G$-bundles over a curve $C$ over $K$
with existence of an extension to $C_O$ (a smooth proper model over $O$). 

In Sec.\ \ref{hecke-bunG-sec} we study Hecke operators on the Schwartz space $\WW$ of half-densities on $\Bun_G$ for a curve $C$ over $K$, and use global arguments
to prove our main result Theorem \ref{small-hecke-thm}.
In Sec.\ \ref{hecke-bunG-K-sec} we relate the operators on $\WW$ arising from the representation theory of $G(K(\!(t)\!))$ in pro-vector spaces with those given by the Hecke
correspondences. We also prove commutation of Hecke operators in $H(C)$ corresponding to different $K$-points of a curve $C$ (see Theorem \ref{HC-comm-thm}).
In Sec.\ \ref{hecke-K-O-sec} we discuss Hecke operators over $O$ and $O/\fm^N$ arising from elements of $H(C)$. Note that we can explicitly describe these Hecke operators
only for minuscule coweights $\la$. Finally in Sec.\ \ref{comm-hecke-sec} we give a global proof of Theorem \ref{small-hecke-thm} on commutativity of the small local Hecke algebra over $O/\fm^N$,
using Theorem \ref{HC-comm-thm} and nice $G$-bundles.

\medskip

\noindent
{\it Notation}:

\noindent
$K$ is a non-archimedian local field, $O \subset K$ its ring of integers, $\fm \subset O$ the maximal ideal, $k= O/\fm$.

\noindent
For a connected split reductive group $G$, $\bL$ denotes the lattice of coweights of $G$, $\bL ^+\subset \bL$ the semigroup of dominant coweights,
 $\Gr_G=G(\!(t)\!)/G[\![t]\!]$ the affine Grassmannian.

\section{Geometry over $K$ and $O$}\label{geom-sec}

\subsection{Varieties over $K$ and $O$}\label{varieties-sec}

Let us recall some standard general constructions. 

\begin{definition}\label{variety} Let $X$ be a scheme of finite type over $K$.

\begin{enumerate}
\item $\mC (X(K))$ is the space of locally constant 
$\mC$-valued functions on $X(K)$.

\item $\mcS (X(K))\subset \mC (X(K))$ is the subspace of 
compactly supported functions.
\item For a line bundle
 $\mcL$ on $X$ and a character $c:K^*\to \C^*$ (i.e., a continuous homomorphism), we denote by $\mcL_c$
 the associated complex local system on $X(K)$ (defined using push-out with respect to $c$).
In particular, for a complex number $z$, we denote by $|\mcL|^z:=\mcL_{|\cdot |^z}$ the local system associated with
 the homomorphism $|\cdot|^z:K^*\to \C^*$.

\item For any $\C$-local system $L$ on $X(K)$, 
since transition functions 
are locally constant, we can define the space  $\mC(X(K),L)$
of locally constant sections 
and  the subspace $\mcS (X(K),L)\subset \mC (X(K),L) $ of compactly supported sections. 
In particular, for a line bundle $\mcL$ on $X$, and a character $c:K^*\to \C^*$, this gives a complex vector space $\mcS(X(K),\mcL_c)$.
\footnote{See Section $2.1$ of \cite{BK}.}
\end{enumerate} \end{definition} 

Now consider the case when $X$ is scheme of finite type over $O$, with the associated scheme $X_K$ over $K$.  
In this case, the space $X(O)$ is an open subset of $X(K)=X_K(K)$.
Furthermore, for any line bundle $\mcL$ on $X$, the
complex line bundle $ |\mcL|^z$ is canonically trivialized over $X(O)$.
Hence, the space $\mcS(X(O),|\mcL|^z)$ of compactly supported locally constant sections of $|\mcL|^z$ can be identified with the space $\mcS(X(O))$ of
locally constant functions on $X(O)$.

Thus, for any line bundle $\mcL$ on $X$, and every $n\ge 1$, we have a natural map
\begin{equation}\label{r-n-X-OK-map}
E_{n,|\mcL|^z}:\SS(X(O/\fm^n))\to \SS(X(O))\hra \SS(X(K),|\mcL|^z),
\end{equation}
which sends a finitely supported function on $X(O/\fm^n)$ to the corresponding locally constant compactly supported function on $X(O)$, 
which is then viewed as a locally constant section of $|\mcL|^z$ on $X(O)$\footnote{The map $E_{n,|\mcL|^z}$ can be viewed as an analog of the parabolic Eisenstein series.}. Note that we have
$$\SS(X(O))=\varinjlim \SS(X(O/\fm^n)).$$

Assume now that $X$ is smooth over $K$, and let $\bo _X$ be the canonical line bundle. Then $\mC (X(K), |\bo _X|)$ is the space of 
smooth complex valued measures on $X(K)$. A nowhere vanishing section $\eta\in \Ga (X,\bo _X)$  defines a smooth measure  $|\eta|$ on $X(K)$.
For $\mu\in \SS(X(K),|\bo_X|)$, the integral $\int _{X(K)} \mu$ is well defined. Similarly, for a smooth morphism $f:X\to Y$, a line bundle $\mcL$ on $Y$, and a character 
$c:K^*\to \C^*$,
we have a well-defined
push-forward map
\begin{equation}\label{f!-var-eq}
f_!:\SS(X(K),(f^*\mcL)_c\ot|\om_{X/Y}|)\to \SS(Y(K),\mcL_c).
\end{equation}

More generally, if $X$ is Gorenstein and has rational singularities then there is a well defined integration map
$$\SS(X(K),|\bo_X|)\to \C: \mu \mapsto \int_{X^{sm}(K)}\mu,$$
where the integral is absolutely convergent (see \cite[Sec.\ 3.4]{AA}). 
Now consider a proper Gorenstein morphism $f:X\to Y$ of integral schemes of finite type over $K$. Assume that there exists a proper birational map $\pi:\wt{X}\to X$ with the following
properties:
\begin{itemize}
\item The map $\pi f:\wt{X}\to Y$ is smooth;
\item there is an isomorphism $\pi^*\om_{X/Y}\simeq \om_{\wt{X}/Y}(-D)$ for some effective Cartier divisor $D$ on $\wt{X}$;
\item locally in smooth topology over $Y$, there is an isomorphism $(\wt{X},D)\simeq (Y\times F, Y\times E)$, where $F$ is smooth and $E\sub F$ is a Cartier divisor.
\end{itemize}
Then for any line bundle $\LL$ on $Y$ and a character $c:K^*\to \C^*$, we can define the push-forward map
$$f_!:\SS(X(K),(f^*\mcL)_c\ot|\om_{X/Y}|)\to \SS(Y(K),\mcL_c)$$ 
as the composition
$$\SS(X(K),(f^*\mcL)_c\ot|\om_{X/Y}|)\to \SS(\wt{X}(K),(\pi^*f^*\LL)_c\ot |\om_{\wt{X}/Y}(-D)|)\to \SS(Y(K),\mcL_c),$$
where the second arrow is given by integration in fibers of $\wt{X}\to Y$. The fact that the resulting section of $\mcL_c$ is locally constant follows from
our assumption on the local structure of $(\wt{X},D)$. The independence on the choice of the relative resolution $\wt{X}$ follows from the fact that the values of $f_!\varphi$
are given by convergent integrals over the smooth loci in the fibers of $f$. 

The following result goes back to Weil \cite{Weil}.

\begin{lemma}\label{Weil-lem}
Let $X$ be a smooth variety over $O$. Then for any function $\phi\in \SS(X(O/\fm^n))$ one has
$$\int_{X(K)} E_{n,|\bo_X|}(\phi)=\frac{1}{|O/\fm^n|^{\dim X}}\sum_{x\in X(O/\fm^n)} \phi(x).$$
\end{lemma}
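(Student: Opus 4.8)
The plan is to reduce the statement to the local structure of $X$ over $O$ and a direct computation on residue disks. Since $X$ is smooth over $O$ of relative dimension $d=\dim X$, the map $X(O)\to X(O/\fm^n)$ is surjective, and for each point $\bar x\in X(O/\fm^n)$ the fiber $U_{\bar x}\subset X(O)$ of points reducing to $\bar x$ is a ``residue disk''. First I would note that $X(O)=\bigsqcup_{\bar x}U_{\bar x}$ is a finite disjoint union of such disks (finite because $X(O/\fm^n)$ is finite, as $X$ is of finite type over $O$ and $O/\fm^n$ is finite), and that $r_{n,|\bo_X|}(\phi)$ is by construction the locally constant section of $|\bo_X|$ on $X(O)$ which on $U_{\bar x}$ equals $\phi(\bar x)$ times the canonical trivialization of $|\bo_X|$ over $X(O)$. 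Hence
$$\int r_{n,|\bo_X|}(\phi)=\sum_{\bar x\in X(O/\fm^n)}\phi(\bar x)\cdot\vol(U_{\bar x}),$$
where $\vol(U_{\bar x})$ is the integral over $U_{\bar x}$ of the canonical trivializing measure. So the whole statement comes down to showing $\vol(U_{\bar x})=|O/\fm^n|^{-d}$ for every $\bar x$.

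To compute $\vol(U_{\bar x})$ I would use smoothness to choose, in an étale neighborhood of a lift $x\in X(O)$ of $\bar x$, local coordinates: an étale $O$-morphism $W\to \A^d_O$ from an open $W\subset X$ containing $x$, so that on $W(O)$ the canonical bundle $\bo_X$ is trivialized by $dt_1\wedge\cdots\wedge dt_d$ (the pullback of the standard form on $\A^d_O$), and this trivialization agrees with the canonical one up to a unit in $O^*$, hence up to a measure-preserving factor. The key point is that an étale $O$-morphism induces a bijection on $O$-points over a fixed $O/\fm^n$-point and is ``measure preserving'' for these forms (the Jacobian is a unit); this is exactly the change-of-variables statement underlying Weil's original argument. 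Under the coordinates, $U_{\bar x}$ is identified with a coset $v+\fm^n O^d\subset O^d$, whose volume for the measure $|dt_1\cdots dt_d|$ (normalized so that $O^d$ has volume $1$) is $|\fm^n|^d_K=q^{-n}\cdots=|O/\fm^n|^{-d}$, since $|O/\fm^n|=q^n$ where $q=|k|$.

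The main obstacle, and the only genuinely non-formal point, is justifying the change-of-variables step: that passing along an étale $O$-morphism $W\to\A^d_O$ identifies the residue disk $U_{\bar x}$ measure-theoretically with a polydisk in $O^d$ of the expected radius, and that the canonically trivializing measure on $X(O)$ matches $|dt_1\wedge\cdots\wedge dt_d|$ on the nose (not just up to a constant). For this I would invoke the non-archimedean inverse/implicit function theorem: an étale morphism of smooth $O$-schemes is, on $O$-points lying over a fixed residue point, an analytic isomorphism of the corresponding polydisks with unit Jacobian, so the pushforward of Haar measure is exact. Alternatively one can cite \cite{Weil} directly, since this is precisely the content of Weil's lemma. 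Once this is in hand, summing $|O/\fm^n|^{-d}\phi(\bar x)$ over $\bar x\in X(O/\fm^n)$ gives the claimed formula.
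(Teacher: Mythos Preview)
Your proposal is correct and follows essentially the same approach as the paper's own proof: both reduce by linearity to delta-functions at points of $X(O/\fm^n)$, identify the integral with the volume of the residue disk $U_{\bar x}=\{x\in X(O):x\equiv \bar x\bmod \fm^n\}$, and compute that volume using an \'etale map to $\A^d_O$ (the paper invokes Hensel's lemma for the bijection on residue disks, which is exactly your non-archimedean inverse function theorem step). The only difference is emphasis: you spell out the Jacobian/change-of-variables issue more explicitly, while the paper simply cites Weil and Hensel.
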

 
\begin{proof} 
This is essentially in \cite[Sec.\ 2.2]{Weil}. It is enough to consider the case when $\phi$ is the delta-function of a point $x_0\in X(O/\fm^n)$.
Then we have the corresponding compact open neighborhood
$$V(x_0,n):=\{x\in X(O) \ | \ x\equiv x_0 \mod \fm^n\},$$
and our statement reduces to the statement that
$$\mu(V_{x_0,n})=\frac{1}{|O/\fm^n|^{\dim X}},$$
where $\mu$ is the measure on $X(O)$ obtained from the canonical trivialization of $|\om_X|$ over $X(O)$.
We can choose near $x_0$ an \'etale map $f:X\to \A^d_O$ sending $x_0$ to the origin, where $d=\dim X$. Then by Hensel's lemma, $f$ maps
$V_{x_0,n}$ bijectively onto $\A^d(\fm^n)\sub \A^d(O)$, and the result follows. 
\end{proof}

\subsection{Stacks over $K$ and $O$}\label{KO-stacks-sec}
 
We refer to \cite{GK2} for details and proofs of the results below on stacks over $K$. 
 
 \begin{definition}\label{stack}

\begin{enumerate} 
\item For a smooth stack $\mcY$ over $K$ we denote by $\bo _\mcY$ the canonical bundle on $\mcY$.

\item For a smooth representable map $q: X\to \mcY$ 
we denote by $\bo _q=\bo_{X/\mcY}$
the relative canonical bundle.

\item We say that a smooth stack  $\mcY$ of finite type over $K$ is {\it admissible} if it can be represented as a quotient $[X/H]$ where $X$ is a smooth variety over $K$ and $H$ is a linear algebraic group acting on $X$.

\item A stack $\mcY$ over $K$ is called {\it admissible}\footnote{This notion is slightly stronger than that of \cite{GK2}.} if it is the union $\cup_{i\ge 1}\mcY_i$ of an increasing sequence $\mcY_1\sub \mcY_2\sub\ldots$ of open admissible substacks of finite type over $K$. 
\end{enumerate} \end{definition}

\begin{claim} Let $\mcY$ be an admissible stack of finite type over $K$. 
\begin{enumerate}

\item A line bundle $\mcL$ on $\mcY=[X/H]$ is canonically represented by an $H$-equivariant line bundle  $\tilde \mcL$ on $X$.
\item The canonical bundle $\bo _\mcY$ on $\mcY=[X/H]$ is represented by $\bo _X\otimes \bL ^d(\fh)$ where $\fh$ is the Lie algebra of $H$ and $d=dim(\fh)$ (we use the adjoint action of
$H$ in defining the $H$-equivariant structure).
\item Any admissible stack $\mcY$ of finite type can be represented as a quotient $[X/\GL _N]$.
\item The topological groupoid $X(K)/\GL _N(K)$ does not depend on a choice of a presentation $\mcY=[X/\GL_N]$. 
\end{enumerate}
\end{claim}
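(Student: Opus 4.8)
The plan is to handle the four assertions in order: (1) and (2) are descent facts about the $H$-torsor $q\colon X\to\mcY=[X/H]$, (3) is a change-of-structure-group construction, and (4) is where the $K$-analytic topology genuinely enters. For (1): the map $q$ is faithfully flat and smooth of relative dimension $d=\dim\fh$, being an $H$-torsor, so by faithfully flat descent along $q$ a line bundle (indeed a quasi-coherent sheaf) on $\mcY$ is the same datum as an $H$-equivariant line bundle on $X$, namely its pullback with the tautological equivariant structure coming from the action groupoid. For (2): differentiating the $H$-action on $X$ gives a map $\fh\ot\OO_X\to T_{X/\mcY}$, $\xi\mapsto\xi_X$, which is an isomorphism because $H$ acts freely on the total space of a torsor, and a check on points shows it is $H$-equivariant for the adjoint action on $\fh$ (if $m_h$ denotes the action of $h$, then $dm_h(\xi_X)=(\Ad_h\xi)_X$). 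Hence the relative canonical bundle $\bo_q=\bo_{X/\mcY}$ is, $H$-equivariantly, the inverse of $\bL^d(\fh)\ot\OO_X$, and combining with $\bo_X\simeq q^*\bo_\mcY\ot\bo_{X/\mcY}$ yields the $H$-equivariant isomorphism $q^*\bo_\mcY\simeq\bo_X\ot\bL^d(\fh)$ asserted in (2).

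For (3), write $\mcY=[X'/H]$ and fix a closed embedding $H\hra\GL_N$ (possible since $H$ is affine). Let $H$ act on $X'\times\GL_N$ by $h\cdot(x',g)=(hx',gh^{-1})$; this is a free action commuting with left translation by $\GL_N$ on the second factor, so on the quotient $X:=(X'\times\GL_N)/H$ there is a residual $\GL_N$-action making $X\to\mcY$ a $\GL_N$-torsor. Projection to $\GL_N$ exhibits $X$ as the bundle with fibre $X'$ over $\GL_N/H$ associated to the $H$-torsor $\GL_N\to\GL_N/H$; since $\GL_N/H$ is a quasi-projective variety (Chevalley), this associated bundle is a scheme of finite type over $K$ by descent, and it is smooth since $\GL_N\to\GL_N/H$ is smooth with smooth fibres over a smooth base. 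Finally, the standard equivalence between $H$-torsors equipped with an equivariant map to $X'$ and $\GL_N$-torsors equipped with an equivariant map to $X$ (induction and restriction of structure group along $H\hra\GL_N$) identifies $[X/\GL_N]$ with $[X'/H]=\mcY$.

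For (4), suppose $\mcY\simeq[X/\GL_N]\simeq[X'/\GL_M]$ and form the fibre product $Z:=X\times_\mcY X'$. Since $X\to\mcY$ and $X'\to\mcY$ are torsors, $Z\to X'$ is a $\GL_N$-torsor and $Z\to X$ is a $\GL_M$-torsor, so $Z$ is a smooth scheme of finite type over $K$ carrying a free $\GL_N\times\GL_M$-action with $[Z/(\GL_N\times\GL_M)]\simeq\mcY$. It suffices to prove that $Z\to X$ induces an equivalence of topological groupoids $Z(K)/(\GL_N(K)\times\GL_M(K))\to X(K)/\GL_N(K)$; applying the symmetric statement to $X'$ then gives $X(K)/\GL_N(K)\simeq X'(K)/\GL_M(K)$. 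For this I would show that $Z(K)\to X(K)$ is a locally trivial $\GL_M(K)$-torsor in the analytic topology: it is surjective because for $x\in X(K)$ the fibre $Z_x$ is a $\GL_M$-torsor over $\Spec K$, hence trivial by Hilbert's Theorem 90, so $Z_x(K)\neq\emptyset$; and since $Z\to X$ is smooth, the implicit function theorem over $K$ furnishes continuous local sections over $X(K)$, which trivialize $Z(K)\to X(K)$ locally as a $\GL_M(K)$-torsor. A locally trivial topological torsor induces an equivalence of quotient groupoids, which completes the argument.

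The only substantive point is in (4): all of (1)--(3) are essentially formal, whereas in (4) the crux is the passage from the algebraic torsor $Z\to X$ to a genuine \emph{topological} $\GL_M(K)$-torsor $Z(K)\to X(K)$, which requires simultaneously the vanishing $H^1(K,\GL_M)=1$ (for surjectivity on $K$-points) and the non-archimedean implicit function theorem (for continuous local sections), so that the resulting equivalence of quotient groupoids respects the topology. Granting that, the common refinement $Z=X\times_\mcY X'$ reduces the comparison of any two $\GL$-presentations of $\mcY$ to this single input, and transitivity yields the asserted independence of the presentation.
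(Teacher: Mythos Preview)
Your proof is correct. The paper does not actually supply a proof of this Claim; it is stated as background with a blanket reference to \cite{GK2} for ``details and proofs of the results below on stacks over $K$.'' Your arguments for (1)--(3) are the standard ones (descent along a torsor, identification of the vertical tangent bundle of an $H$-torsor with $\fh\otimes\OO_X$ via the infinitesimal action, and induction of structure group along a closed embedding $H\hookrightarrow\GL_N$), and your treatment of (4) via the common refinement $Z=X\times_{\mcY}X'$, Hilbert~90 for $\GL_M$, and the non-archimedean implicit function theorem is exactly the expected route and matches what is done in \cite{GK2}.
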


\begin{proof} Most of the assertions are straightforward. Replacing a presentation $[X/H]$ by one with $H=\GL_N$ is achieved using an embedding $H\to \GL_N$
(see \cite[Lem.\ 6.7]{GK2}). The last assertion follows from Hilbert's theorem 90, which implies triviality of $\GL_N$-torsors over $K$.
\end{proof}

\begin{definition}\label{stackF}

\begin{enumerate}
\item For a line bundle $\mcL$ on an admissible stack of finite type over $K$, $\mcY = [X/\GL_N]$, and a character $c:K^*\to \C^*$, 
we denote by   $\mcS (\mcY(K), \mcL_c)$ (where $z\in \C$)
the space of coinvariants 
$$\mcS (\mcY(K), \mcL_c):= \mcS (X(K), \ti \mcL_c \otimes |\bo _{X/\mcY}|) _{\GL_N(K)},$$ 
where 
 $\ti \mcL$ is   the  $\GL_N$-equivariant line bundle  on $X$ which represents $\mcL$.

\item If  $\mcL$ is   a line bundle on an admissible 
stack $\mcY $ which is the union of increasing open 
substacks $\mcY _i$ of finite type we define
  $\mcS (\mcY(K), \mcL_c)= \varinjlim  
\mcS (\mcY _i(K), (\mcL|_{\mcY _i})_c)$. 

\end{enumerate} \end{definition}

\begin{claim} The space $\mcS (\mcY(K), \mcL_c)$ for an admissible stack of finite type does not depend on a presentation of $\mcY$ as a quotient $[X/\GL_N]$. 
The space $\mcS(\mcY(K),\mcL_c)$ for an admissible stack does not depend on a choice of open admissible substacks of finite type $\mcY_i$ such that $\mcY=\cup \mcY_i$.
\end{claim}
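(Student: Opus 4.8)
The plan is to prove the two independence statements separately, since they have slightly different flavors, though both rest on comparing two presentations by choosing a common refinement.

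\textbf{Independence of presentation for a finite type stack.} Suppose $\mcY=[X/\GL_N]=[X'/\GL_{N'}]$ are two presentations of the same admissible stack of finite type, and let $\mcL$ be a line bundle on $\mcY$. First I would form the fiber product $Z:=X\times_\mcY X'$, which is a smooth variety over $K$ carrying commuting actions of $\GL_N$ and $\GL_{N'}$, with projections $p:Z\to X$ and $p':Z\to X'$ that are torsors under $\GL_{N'}$ and $\GL_N$ respectively (as pullbacks of the atlas maps $X'\to\mcY$ and $X\to\mcY$, which are $\GL_{N'}$- and $\GL_N$-torsors). The pullback $\ti\mcL$ to $X$, tensored with $|\bo_{X/\mcY}|$, pulls back along $p$ to the pullback to $Z$ of $\mcL$ tensored with $|\bo_{Z/\mcY}|$, using that $\bo_{Z/\mcY}\simeq p^*\bo_{X/\mcY}\ot\bo_{Z/X}$ and that $\bo_{Z/X}$ is the relative canonical of a $\GL_{N'}$-torsor, hence canonically $\GL_{N'}$-equivariantly trivial up to the adjoint twist $\bL^{d'}(\fgl_{N'})$ that is absorbed in the comparison with the third item of the first Claim. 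The key computation, then, is that integration along the fibers of the torsor $p$ induces an isomorphism
$$\SS(Z(K),\,p^*\ti\mcL_c\ot|\bo_{Z/\mcY}|)_{\GL_{N'}(K)}\ \xrightarrow{\ \sim\ }\ \SS(X(K),\,\ti\mcL_c\ot|\bo_{X/\mcY}|),$$
and similarly for $p'$ and $X'$; this is the statement that for a $\GL_m(K)$-torsor $\pi:Z(K)\to X(K)$ (in the sense of $K$-points), Schwartz half-densities on the total space, twisted by the relative canonical, descend to Schwartz sections on the base after taking $\GL_m(K)$-coinvariants — which is precisely the content recalled from \cite{GK2} that makes Definition \ref{stackF} well posed in the first place, applied now to the atlas map rather than to $X\to\mcY$. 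Taking further $\GL_N(K)\times\GL_{N'}(K)$-coinvariants and using that coinvariants can be computed in either order, both sides are identified with $\SS(Z(K),(\text{pullback of }\mcL)_c\ot|\bo_{Z/\mcY}|)_{\GL_N(K)\times\GL_{N'}(K)}$, giving the desired canonical isomorphism $\SS([X/\GL_N](K),\mcL_c)\simeq\SS([X'/\GL_{N'}](K),\mcL_c)$. One must check this isomorphism is independent of intermediate choices, but since $Z$ is canonically attached to the two presentations the only thing to verify is the compatibility of the two torsor-descent isomorphisms, which follows from their construction via integration.

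\textbf{Independence of the exhaustion for a general admissible stack.} Now let $\mcY=\cup_i\mcY_i=\cup_j\mcY'_j$ be two exhaustions by open admissible substacks of finite type, and $\mcL$ a line bundle on $\mcY$. Restriction along an open immersion of admissible finite type stacks $\mcU\hra\mcV$ induces, by construction of the Schwartz space as compactly supported sections, an extension-by-zero map $\SS(\mcU(K),(\mcL|_\mcU)_c)\to\SS(\mcV(K),(\mcL|_\mcV)_c)$ (realized on a common atlas by extension by zero of half-densities, compatibly with the previous part), and these maps are functorial in nested open immersions. Given the two exhaustions, I would use that each $\mcY_i$, being of finite type and open in $\mcY=\cup_j\mcY'_j$, is quasicompact and hence contained in some $\mcY'_{j(i)}$; symmetrically each $\mcY'_j$ lies in some $\mcY_{i(j)}$. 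This produces a cofinal interleaving of the two directed systems, so that
$$\varinjlim_i\SS(\mcY_i(K),(\mcL|_{\mcY_i})_c)\ \simeq\ \varinjlim_j\SS(\mcY'_j(K),(\mcL|_{\mcY'_j})_c),$$
with the isomorphism canonical because the transition maps in both systems are the extension-by-zero maps and the interleaving maps are restrictions along open immersions, which commute on the nose. The only subtlety is to confirm that an open admissible substack of finite type is genuinely quasicompact in the relevant sense — it is, being itself covered by a single affine-type atlas $[X/\GL_N]$ with $X$ of finite type — and that the colimit does not see the choice of which $\mcY'_{j(i)}\supseteq\mcY_i$ is picked, which holds because any two such choices are dominated by a third term of the $\mcY'$-system and the restriction maps agree there.

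\textbf{Main obstacle.} The genuinely non-formal step is the torsor-descent isomorphism in the first part: that integration along the $K$-fibers of a $\GL_m(K)$-torsor, suitably twisted by the relative canonical bundle, is an isomorphism from Schwartz half-densities upstairs to Schwartz sections downstairs after taking coinvariants. Everything else is bookkeeping with directed colimits and canonicity. I expect this descent statement to be available from \cite{GK2} (it is the same mechanism that makes $\SS(\mcY(K),\mcL_c)$ well defined at all), so in the write-up the real work is to set up the fiber product $Z=X\times_\mcY X'$ correctly, identify the relative canonical bundles compatibly with the $\GL$-equivariant structures from the first Claim, and invoke that descent in the two directions, concluding by the interchangeability of coinvariants.
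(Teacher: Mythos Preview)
Your proposal is correct and follows the standard approach. Note that the paper itself does not supply a proof of this Claim: at the start of Section~\ref{KO-stacks-sec} it refers to \cite{GK2} for details and proofs of the results on stacks over $K$, and this Claim is one of those. Your argument --- forming the fiber product $Z=X\times_{\mcY}X'$, using fiberwise integration along the two $\GL$-torsor projections to identify both presentations' coinvariant spaces with $\SS(Z(K),\ldots)_{\GL_N(K)\times\GL_{N'}(K)}$, and then a cofinality argument for the exhaustion --- is precisely the mechanism used in \cite{GK2} (cf.\ the proof of Prop.\ 6.2 there, which the present paper invokes elsewhere). You have correctly isolated the only non-formal input as the torsor-descent isomorphism, and correctly noted that it is supplied by \cite{GK2}.
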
 


We define {\it admissible stacks over $O$} in the same way as above, replacing $K$ by $O$.
 
 \begin{lemma}\label{eq-triv-O-lem}
 Let $X$ be a scheme over $O$ equipped with an action of an algebraic group $H$ (defined over $O$), and let $\MM$ be
an $H$-equivariant line bundle on $X$. Then the natural trivialization of $|\MM|$ over $X(O)$ is compatible with the $H(O)$-action.
\end{lemma}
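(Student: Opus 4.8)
The plan is to reduce the statement to a local computation in a neighborhood of an $O$-point, where the $H$-equivariant structure becomes explicit. Recall that the trivialization of $|\MM|$ over $X(O)$ comes from the following fact: if $x\in X(O)$ is an $O$-point, then the fiber $\MM_x$ is a free $O$-module of rank one, so the associated complex line $|\MM_x|$ (obtained via $|\cdot|:K^*\to\mR_{>0}$ applied to transition functions, after base-change to $K$) is canonically trivialized by declaring any $O$-basis vector of $\MM_x$ to have norm $1$; this is well-defined since two $O$-bases differ by a unit in $O^*$, which has norm $1$. So the first step is to make this description precise: for $x\in X(O)$, pick a Zariski (or \'etale) neighborhood $U$ of the image of $x$ over which $\MM$ is trivialized by a section $s\in\Ga(U,\MM)$ that restricts to a basis of $\MM_x$; then near $x$ the trivialization of $|\MM|$ sends $s$ to the constant function $1$.

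Next I would unwind what it means for the $H(O)$-action to be compatible. Given $h\in H(O)$ and $x\in X(O)$, the $H$-equivariant structure on $\MM$ gives an isomorphism $\phi_{h,x}:\MM_x\xrightarrow{\sim}\MM_{h\cdot x}$ of $O$-modules (this is where $H$-equivariance over $O$, not just over $K$, is used: the comparison isomorphism is defined over $O$, hence carries an $O$-basis to an $O$-basis). Therefore if $v\in\MM_x$ is an $O$-basis, then $\phi_{h,x}(v)$ is an $O$-basis of $\MM_{h\cdot x}$, so both $v$ and $\phi_{h,x}(v)$ have norm $1$ under the respective canonical trivializations. This is exactly the assertion that the action map $H(O)\times X(O)\to X(O)$, together with the equivariant structure, carries the canonical trivialization to itself. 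The key point to articulate carefully is that the formation of $|\MM_x|$ and its canonical norm is functorial in $O$-module isomorphisms of rank-one free $O$-modules, with unit-determinant (i.e. norm-$1$) scaling; I would phrase this as: $|\cdot|$ applied to the transition cocycle of $\MM$, evaluated at $O$-points, takes values in $O^*\subset K^*$, which $|\cdot|$ sends to $1$.

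To organize this cleanly, I would choose an affine \'etale chart: since the claim is local on $X$, I may assume $X=\Spec A$ with $A$ an $O$-algebra and $\MM$ free, trivialized by a section $s$. Then for $x\in X(O)$ corresponding to $\phi_x:A\to O$, and $h\in H(O)$, the equivariant structure is encoded by a comparison on $X\times_O H$, i.e. an element $u\in\Ga(X\times_O H,\OO^*)$ with $a^*s = u\cdot\pr_X^*s$ where $a$ is the action map; evaluating $u$ at $(x,h)\in (X\times_O H)(O)$ gives a unit $u(x,h)\in O^*$ relating the trivializing sections at $x$ and $h\cdot x$. Since $|u(x,h)|=1$, the two canonical trivializations of $|\MM|$ at $x$ and at $h\cdot x$ agree under the transport by $a$. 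Gluing over a cover of $X$ (the constant $1$ is patched consistently because the $O$-valued transition functions between charts also have norm $1$) finishes the argument.

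The main obstacle I anticipate is purely bookkeeping: being careful that the ``$H$-equivariant structure over $O$'' genuinely provides $O$-linear (not merely $K$-linear) comparison isomorphisms on fibers at $O$-points — this is the crux, and it is where the hypothesis that $X$, $H$, and the equivariant structure are all defined over $O$ is essential; over $K$ alone the comparison unit could have any nonzero norm and the statement would fail. Beyond that, one must phrase things so that the argument is manifestly independent of the choices of local trivializing sections, which is automatic once one observes that changing $s$ by an element of $\Ga(U,\OO^*)$ changes $u$ by an $O^*$-valued function, not affecting norms. There are no deep inputs here; the lemma is a compatibility statement whose proof is an exercise in chasing the definition of the canonical trivialization through the equivariant structure.
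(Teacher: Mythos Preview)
Your proposal is correct and takes essentially the same approach as the paper: express the equivariant structure via a unit-valued function $u$ (the paper's $f_{ij}$) on $H\times X$ defined over $O$, and observe that at $O$-points this unit lies in $O^*$ and hence has absolute value $1$. The paper's version is slightly terser---it works directly with an open cover $(U_i)$ of $X$ and the corresponding cover $V_{ij}$ of $H\times X$---but the content is identical to your local computation plus gluing.
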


\begin{proof}
Let $s_i:\OO_{U_i}\rTo{\sim} \MM|_{U_i}$ be trivializations of $\MM$ over an open covering $(U_i)$ of $X$. Consider the covering of $H\times X$ by
the open subsets
$$V_{ij}=\{(h,x)\in X\times H \ | x\in U_i, hx\in U_j\}.$$
Over each $V_{ij}$ we have an equality
$$s_j(hx)=f_{ij}(h,x)\cdot s_i(x),$$
for some $f_{ij}\in \OO^*(V_{ij})$ (we use the equivariant structure to view both sides as sections of $p_2^*\MM$).
Hence, taking absolute values we get the equality
$$|s_j(hx)|=|s_i(x)|$$
on $V_{ij}(O)$, which implies our assertion.
\end{proof}
 
\begin{definition}\label{OL} Let $\mcY =[X/\GL_N]$  be an admissible stack of finite type defined over $O$. 

\begin{enumerate}

 \item  We set $\mcS(\mcY(O)):=\mcS(X(O))_{\GL_N(O)}$.

\item  If $\mcY $ is the union of increasing open 
substacks $\mcY _i$ of finite type we define
  $\mcS (\mcY(O))= \varinjlim  
\mcS (\mcY _i(O))$. 

\end{enumerate} \end{definition}

\begin{remark} By Lemma \ref{eq-triv-O-lem}, we have $\mcS(X(O))_{\GL_N(O)}=\mcS(X(O),|\om_{X/\YY}|)_{\GL_N(O)}$.
The space $\mcS (\mcY(O))$ does not depend on a representation of $\mcY$ as a quotient $[X/\GL_N]$.
\end{remark}

If $\YY=[X/\GL_N]$ is an admissible stack of finite type over $O$, $\LL$ a line bundle on $\YY$, represented by a $\GL_N$-equivariant line bundle
$\wt{\LL}$ on $X$, then by Lemma \ref{eq-triv-O-lem}, the identification 
$$\SS(X(O),|\wt{\LL}|^z\ot |\om_{X/\YY}|)\simeq \SS(X(O))$$
is compatible with the $\GL_N(O)$-actions. Hence, we get a natural map
$$i^{\mcY,|\LL|^z}_*:\SS(\YY(O))=\SS(X(O))_{\GL_N(O)}\to \SS(X(K),|\wt{\LL}|^z\ot |\om_{X/\YY}|)_{\GL_N(K)}=\SS(\YY_K(K),|\LL|^z),$$
where $\YY_K$ is corresponding stack $[X_K/\GL_N]$ over $K$. 
 
More generally, if $\mcY$ is an admissible stack over $O$, $\mcY_K$ the corresponding admissible stack over $K$,
then for any line bundle $\mcL$ on $\mcY$, we get a natural map
\begin{equation}\label{r*-stacks-eq}
i^{\mcY,|\mcL|^z}_*:\mcS(\mcY(O))\to \mcS(\mcY_K(K),|\mcL|^z)
\end{equation}
defined as the limit of similar maps for $\mcY_i=[X/\GL_{N}]$,

\begin{definition}\label{O}

For a stack $\mcY$ over $O$ we denote by $i^\mcY $ the natural map $\mcY (O)\to \mcY (K)$ of topological groupoids.

\end{definition} 


\begin{prop}\label{adm-surj-prop} 
The image of the map $i^{\mcY,|\mcL|^z}_*$ (see \eqref{r*-stacks-eq}) consists of densities supported on the open subgroupoid 
$i^\mcY(\mcY (O))$.
\end{prop}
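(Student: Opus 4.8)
The plan is to prove the two inclusions separately; the inclusion $\im(r^\mcY_*)\subseteq\{\text{densities supported on }r^\mcY(\mcY(O))\}$ is formal, and the content is the reverse one. First I would reduce to the case of a finite-type admissible stack $\mcY=[X/\GL_N]$ over $O$. For a general admissible $\mcY=\bigcup_i\mcY_i$ this is a standard limiting argument: $\mcS(\mcY(O))$ and $\mcS(\mcY_K(K),|\mcL|^z)$ are the colimits of the corresponding spaces for the $\mcY_i$, and $r^\mcY(\mcY(O))=\bigcup_i r^{\mcY_i}(\mcY_i(O))$ is an increasing union of open subgroupoids, so, using that supports of Schwartz sections are compact, every density supported on $r^\mcY(\mcY(O))$ is already, at some finite level $i$, a density supported on $r^{\mcY_i}(\mcY_i(O))$, and the finite-type statements assemble to the general one. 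So assume $\mcY=[X/\GL_N]$ is of finite type, with $\mcL$ represented by a $\GL_N$-equivariant line bundle $\wt\mcL$ on $X$, and put $\FF:=|\wt\mcL|^z\ot|\om_{X/\mcY}|$, a $\GL_N$-equivariant local system on $X$. By Lemma~\ref{eq-triv-O-lem} the local system $\FF$ is $\GL_N(O)$-equivariantly trivialized over $X(O)$, and by construction $r^\mcY_*$ is the map on $\GL_N$-coinvariants induced by the $\GL_N(O)$-equivariant extension-by-zero $\mcS(X(O))=\mcS(X(O),\FF)\hra\mcS(X(K),\FF)$. Set $W:=\GL_N(K)\cdot X(O)\subseteq X(K)$: since $X(O)$ is open in $X(K)$, $W$ is open and $\GL_N(K)$-invariant, and its image under $X(K)\to\mcY_K(K)=X(K)/\GL_N(K)$ is exactly $r^\mcY(\mcY(O))$ (which is therefore the open subgroupoid of the statement). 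Consequently, a density on $\mcY_K(K)$ is supported on $r^\mcY(\mcY(O))$ precisely when it is the class of some $\ti\mu\in\mcS(X(K),\FF)$ with $\supp(\ti\mu)\subseteq W$.

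The inclusion $\im(r^\mcY_*)\subseteq\{\text{densities supported on }r^\mcY(\mcY(O))\}$ is then immediate: an element of $\mcS(\mcY(O))$ is represented by a section of $\FF$ supported on $X(O)\subseteq W$, hence its image under $r^\mcY_*$ is the class of a section supported on $W$.

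For the reverse inclusion, start from a density $\mu$ on $\mcY_K(K)$ supported on $r^\mcY(\mcY(O))$, and choose a representative $\ti\mu\in\mcS(X(K),\FF)$ with $S:=\supp(\ti\mu)\subseteq W$; recall $S$ is compact. Since $\{g\cdot X(O)\}_{g\in\GL_N(K)}$ is an open cover of $W$, finitely many translates cover $S$, say $S\subseteq g_1X(O)\cup\cdots\cup g_mX(O)$. Here I would use that $K$ is non-archimedean to split $S=S_1\sqcup\cdots\sqcup S_m$ into compact open pieces with $S_i\subseteq g_iX(O)$; letting $\phi_i$ be the characteristic function of $S_i$, we get $\ti\mu=\sum_i\phi_i\ti\mu$ with $\phi_i\ti\mu\in\mcS(X(K),\FF)$ supported in $g_iX(O)$, so that $g_i^{-1}\cdot(\phi_i\ti\mu)$ is supported in $X(O)$. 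Because $g_i^{-1}\cdot(\phi_i\ti\mu)$ and $\phi_i\ti\mu$ have the same class in the $\GL_N(K)$-coinvariants, the section $\nu:=\sum_i g_i^{-1}\cdot(\phi_i\ti\mu)\in\mcS(X(O),\FF)$ also represents $\mu$. Transporting $\nu$ through the trivialization of $\FF$ over $X(O)$ gives $f\in\mcS(X(O))$, and by the description of $r^\mcY_*$ above $r^\mcY_*\big([f]_{\GL_N(O)}\big)=[\nu]_{\GL_N(K)}=\mu$; thus $\mu\in\im(r^\mcY_*)$.

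I expect the only non-formal ingredient to be the step that turns a compactly supported representative into one supported in a single copy of $X(O)$ at the cost of a $\GL_N(K)$-translation: this combines compactness of supports of Schwartz sections with a locally constant partition of unity subordinate to a finite subcover of $\{g\cdot X(O)\}$, which is where the non-archimedean hypothesis enters and which makes the translation invisible after passing to coinvariants. The rest is bookkeeping with the $\GL_N(O)$-equivariant trivialization of Lemma~\ref{eq-triv-O-lem} and the reduction to finite type, and I do not anticipate a genuine obstacle there.
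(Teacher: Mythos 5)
Your argument is correct in substance, but it distributes the work differently from the paper, and the step you treat most briefly is exactly where the paper's proof does its real work. In the finite-type case your partition-and-translate trick (cover the compact open support $S\sub \GL_N(K)\cdot X(O)$ by finitely many translates $g_iX(O)$, disjointify, translate each piece back by $g_i^{-1}$, and use that translation is invisible in $\GL_N(K)$-coinvariants) is a clean, more elementary substitute for the paper's Step 3, which instead lifts the density along the smooth surjection $X_{i,j}\to X_i$ (as in \cite[Prop.\ 6.2]{GK2}) and pushes it forward to $X_j$. The paper, however, never reduces to a single finite-type chart, because the reduction is the delicate point: for an admissible $\mcY=\cup_i\mcY_i$ one has in general $r^{\mcY_i}(\mcY_i(O))\subsetneq r^\mcY(\mcY(O))\cap\mcY_i(K)$ — a $K$-point of $\mcY_i$ may extend to an $O$-point only of some larger $\mcY_j$ — and handling this is precisely the purpose of the fibre products $X_{i,j}$ with $j\ge i$ in Steps 1--2. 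Your reduction can be made to work: each $r^{\mcY_j}(\mcY_j(O))$ is open, their union is $r^\mcY(\mcY(O))$, so a compact support lies in some $r^{\mcY_j}(\mcY_j(O))$ with $j$ possibly larger than the level $i_0$ at which the density is first represented, and you must then transport the representative from level $i_0$ to level $j$ through the transition maps of the colimit defining $\mcS(\mcY_K(K),|\mcL|^z)$, checking that the support does not grow; those transition maps (extension by zero across a change of presentation) are themselves built by exactly the lifting-and-pushforward along $X_{i_0,j}$ that the paper carries out, so a fully written version of your proof would either cite that construction or reproduce Steps 1--3. What your route buys, once that bookkeeping is granted, is that the remaining content is the elementary translation argument in a single chart rather than an appeal to integration along torsor fibres; what the paper's route buys is that the change-of-level and the move into $X_j(O)$ are accomplished in one construction, so no separate support-preservation statement for the transition maps is needed.
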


 \begin{proof}
 Let $\mcY_i=[X_i/\GL_{N_i}]$. For each $j\ge i$, let us consider the cartesian square
 \begin{diagram}
 X_{i,j}&\rTo{\wt{f}_{i,j}}& X_j\\
 \dTo{p_{i,j}}&&\dTo{}\\
 X_i&\rTo{f_{i,j}}& \mcY_j
 \end{diagram}
 where $f_{i,j}$ is the composition of the projection $X_i\to \mcY_i$ with the open embedding $\mcY_i\to \mcY_j$.
 Note that $X_{i,j}$ is a $GL_N$-torsor over $X_i$.
 
\noindent
{\bf Step 1}.
We claim that for every point $x\in X_i(K)$ in the preimage of $r^\YY(\YY(\OO))\sub \YY(K)$, there exists $j\ge i$ and a point $\wt{x}\in X_{i,j}(K)$ over $x$ such that $\wt{f}_{i,j}(\wt{x})\in X_j(O)$.
Indeed, by definition
there exists $j\ge i$ such that the image of $x$ in $\mcY_j(K)$ comes from a point in $X_j(O)$.
Thus, if $\wt{x}\in X_{i,j}(K)$ is any point over $x$ then the $\GL_{N_j}(K)$-orbit of $\wt{f}_{i,j}(x')$ contains a point in $X_j(O)$.
Hence, changing $\wt{x}$ to $g\wt{x}$ for an appropriate $g\in \GL_{N_j}(K)$, we can achieve that $\wt{f}_{i,j}(\wt{x})\in X_j(O)$.

\noindent
{\bf Step 2}.
Next, we claim that for any compact open subset $A$ in the preimage of $r^\YY(\YY(\OO))$ in $X_i(K)$, there exists $j\ge i$ and a compact open $\wt{A}\sub X_{i,j}(K)$ such that $p_{i,j}(\wt{A})=A$
and $\wt{f}_{i,j}(\wt{A})\sub X_j(O)$.
Indeed, by Step 1, for any point $x\in A$ we can find $j\ge i$ and a point $\wt{x}\in X_{i,j}(K)$ over $x$ such that $\wt{f}_{i,j}(\wt{x})\in X_j(O)$.
Furthermore, there exists an open compact neighborhood $U$ of $\wt{x}$ in $X_{i,j}(K)$ such that $\wt{f}_{i,j}(U)\sub X_j(O)$. By compactness of $A$,
finitely many open sets of the form $p_{i,j}(U)$ cover $A$, which implies our claim.

\noindent
{\bf Step 3}. 
Given an element $\phi\in \mcS(X_i(K),|\mcL \otimes \bo_{X_i/\mcY}|)$, supported on a compact open subset $A\sub X_i(K)$ contained in the preimage of $i^\YY(\YY(\OO))$, we can find $j\ge i$ and
$\wt{A}\sub X_{i,j}(K)$ as in Step 2. Since the projection $\wt{A}\to A$ is smooth and surjective, as in the proof of \cite[Prop.\ 6.2]{GK2}, we can find 
$\wt{\phi}\in \mcS(X_{i,j}, |\mcL \otimes \bo_{X_{i,j}/\mcY}|)$ supported on $\wt{A}$, such that $p_{i,j,!}(\wt{\phi})=\phi$ 
(note that $\bo_{X_{i,j}/\mcY}\simeq \bo_{X_i/\mcY}\ot \bo_{X_{i,j}/X_i}$). Now $\wt{f}_{i,j,!}(\wt{\phi})\in \mcS(X_j,|\mcL \otimes \bo_{X_j/\mcY}|)$
maps to the same element in $\mcS(\mcY(K),|\mcL|)$ as $\phi$. Since $\wt{f}_{i,j,!}(\wt{\phi})$ is supported on $\wt{f}_{i,j}(\wt{A})\sub X_j(O)$,
our assertion follows.
\end{proof}

  


For a smooth representable morphism $f:\XX\to \YY$ of admissible stacks over $K$, a line bundle $\LL$ over $\YY$, and a character $c:K^*\to \C^*$, we define
the push-forward maps
$$f_!:\SS(\XX(K),(f^*\LL)_c\ot |\om_{\XX/\YY}|)\to \SS(\YY(K),\LL_c)$$
using the corresponding maps \eqref{f!-var-eq} for varieties (see also \cite[Sec.\ 6.8]{GK2}).

Let $\YY$ be an admissible stack of finite type over $O$, $\LL$ a line bundle over $\YY$.
For each $n>0$, we have the groupoid $\YY(O/\fm^n)$ of $O/\fm^n$-points of $\YY$, and the corresponding space $\SS(\YY(O/\fm^n))$ of finitely
supported functions.
Furthermore, 
we have a natural map
$$E_{n,|\LL|^z}:\SS(\YY(O/\fm^n))\to \SS(\YY(O)) \rTo{i^{\YY}_*} \SS(\YY(K),|\LL|^z)$$
induced by the map \eqref{r-n-X-OK-map}.
As in the case of varieties, we see that $i^{\YY}_*(\SS(\YY(O)))$ is the union of the subspaces $E_{n,|\LL|^z}(\SS(\YY(O/\fm^n)))$.
  
\begin{prop}\label{int-point-integration-prop}
Let $f:\XX\to \YY$ be a representable smooth morphism of admissible stacks over $O$, and let $f_{O/\fm^n}:\XX(O/\fm^n)\to \YY(O/\fm^n)$ denote the corresponding
functor between groupoids of $O/\fm^n$-points. Then for any line bundle $\LL$ over $\YY$, and any $\phi\in \SS(\XX(O/\fm^n))$, we have
$$f_!E_{n,|f^*\LL|^z\ot|\om_{\XX/\YY}|}(\phi)=\frac{1}{|O/\fm^n|^{\dim \XX-\dim \YY}}\cdot E_{n,|\LL|^z}f_{O/\fm^n,*}\phi.$$
Here we use the natural push-forward map $f_{O/\fm^n,*}$ for finitely supported functions on groupoids. 
\end{prop}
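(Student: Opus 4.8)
The plan is to reduce the statement for stacks to the corresponding statement for schemes, namely Lemma \ref{Weil-lem} together with the compatibility of push-forward with the trivializations over $O$-points. First I would choose presentations $\XX=[X/\GL_N]$ and $\YY=[Y/\GL_M]$ as admissible stacks of finite type over $O$; since $f$ is representable and smooth, after passing to a common presentation (using part (3) of the Claim following Definition \ref{variety}, in its $O$-version) I may assume there is a $\GL_N\times\GL_M$-equivariant smooth morphism of schemes $\tilde f:X\to Y$ inducing $f$, and that the groupoid-level functors $f_{O/\fm^n}$ and $f_!$ are both computed from $\tilde f$ by taking $\GL$-coinvariants. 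The relative canonical bundle $\om_{\XX/\YY}$ is represented, via part (2) of that Claim, by $\om_{X/Y}$ twisted by a one-dimensional representation coming from $\Lambda^{\dim\GL_N}(\mathfrak{gl}_N)\ot\Lambda^{\dim\GL_M}(\mathfrak{gl}_M)^\vee$; but this twist is irrelevant for absolute values since $|\cdot|$ kills it on $O$-points (Lemma \ref{eq-triv-O-lem}), so on the level of $\SS(-(O))$ and $\SS(-(O/\fm^n))$ everything is governed by $\om_{X/Y}$ alone.

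Next I would unwind both sides. The map $r_{n,|f^*\LL|^z\ot|\om_{\XX/\YY}|}$ is, by definition and by the previous remark, induced by $r_{n,|\om_{X/Y}|}:\SS(X(O/\fm^n))\to\SS(X(O))$, followed by $\GL_N(O)$-coinvariants; similarly on $\YY$. So it suffices to prove the scheme-level identity
$$\tilde f_!\, r_{n,|\om_{X/Y}|}(\psi)=\frac{1}{|O/\fm^n|^{\dim X-\dim Y}}\cdot r_{n}\,\tilde f_{O/\fm^n,*}(\psi)$$
for $\psi\in\SS(X(O/\fm^n))$, and then take $\GL$-coinvariants of both sides (both $r$ and $f_!$ commute with the $\GL(O)$-action, and $f_{O/\fm^n,*}$ on finitely supported functions on groupoids is compatible with the coinvariant quotient by construction). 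For this it is enough to take $\psi=\delta_{x_0}$, the delta-function of a point $x_0\in X(O/\fm^n)$. Let $y_0=\tilde f_{O/\fm^n}(x_0)\in Y(O/\fm^n)$. Then $r_{n,|\om_{X/Y}|}(\delta_{x_0})$ is the function on $X(O)$ equal to the fiberwise half-... — rather, to the normalized measure supported on the residue disk $V(x_0,n)=\{x\in X(O)\mid x\equiv x_0\bmod\fm^n\}$, viewed as a section of $|\om_{X/Y}|$ via the canonical trivialization; and $\tilde f_!$ integrates it along the fibers of $\tilde f$ over the residue disk $V(y_0,n)\subset Y(O)$.

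The key local computation is then: for $x_0\in X(O/\fm^n)$ lying over $y_0$, integrating the canonical relative measure on the fiber over the residue disk $V(y_0,n)$ gives $|O/\fm^n|^{-(\dim X-\dim Y)}$ times the characteristic function of $V(y_0,n)$, i.e. $|O/\fm^n|^{-(\dim X-\dim Y)}\cdot\delta_{y_0}$ as an element of $\SS(Y(O/\fm^n))$ pushed to $\SS(Y(O))$. To see this, I would argue exactly as in the proof of Lemma \ref{Weil-lem}: near $x_0$ choose an \'etale morphism $g:X\to\A^{\dim Y}_O\times\A^{d}_O$ over $Y$ (where $d=\dim X-\dim Y$), sending $x_0$ to the origin of the $\A^d$-factor in the appropriate fiber; such a $g$ exists because $\tilde f$ is smooth of relative dimension $d$, so \'etale-locally it looks like a projection $\A^d_Y\to Y$. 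By the Hensel lemma $g$ maps the residue disk $V(x_0,n)$ bijectively and measure-preservingly (for the relative canonical measures) onto $\A^d(\fm^n)\times\{pt\}$ inside a fiber of $\A^d_Y\to Y$ over $V(y_0,n)$, and $\vol(\A^d(\fm^n))=|O/\fm^n|^{-d}$. Fiberwise integration therefore produces exactly $|O/\fm^n|^{-d}$ on all of $V(y_0,n)$, which is what we want. Summing over $x_0$ in a fiber and taking $\GL$-coinvariants recovers the asserted formula, since $f_{O/\fm^n,*}\delta_{x_0}=\delta_{y_0}$.

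I expect the main obstacle to be bookkeeping rather than a deep point: one must check carefully that the normalization factor coming from the relative dimension of $\tilde f$ is insensitive to the choice of presentation (the $\GL_N$ versus $\GL_M$ factors contribute $\dim\GL_N-\dim\GL_M$ to $\dim X-\dim Y$, but exactly the same difference appears in $\dim\XX-\dim\YY$ with opposite sign, so the stack-level exponent $\dim\XX-\dim\YY$ is the correct one), and that all the maps in sight — $r_n$, $f_!$, $f_{O/\fm^n,*}$, and passage to $\GL(O)$-coinvariants — genuinely commute. The étale-local model for $\tilde f$ and the compatibility of $\tilde f_!$ with étale base change is the technical heart, but it is entirely parallel to Lemma \ref{Weil-lem} and \cite[Sec.\ 2.2]{Weil}.
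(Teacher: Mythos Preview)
Your approach is essentially correct and follows the same overall strategy as the paper --- reduce to schemes, then apply Weil's lemma fiberwise --- but the paper executes both steps more simply.

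First, the paper does not introduce two different groups $\GL_N$ and $\GL_M$. Since $f$ is \emph{representable}, one can start from any presentation $\YY=[Y/\GL_N]$, set $X:=\XX\times_\YY Y$ (a scheme, by representability), and obtain $\XX=[X/\GL_N]$ with a smooth $\GL_N$-equivariant map $X\to Y$ inducing $f$. With a common group, $\dim X-\dim Y=\dim\XX-\dim\YY$ on the nose, and no Lie-algebra twist or dimension bookkeeping is needed. Your final paragraph tries to reconcile the discrepancy between $\dim X-\dim Y$ and $\dim\XX-\dim\YY$ in the two-group setup, but the accounting there is not quite right (in the natural setup where $X=X_0\times_\YY Y$ carries a $\GL_N\times\GL_M$-action and $Y$ only a $\GL_M$-action, the difference is $\dim\GL_N$, not $\dim\GL_N-\dim\GL_M$); the paper's choice of presentation sidesteps this entirely.

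Second, at the scheme level the paper does not redo the \'etale-local computation. It simply observes that for $y\in Y(O)$,
\[
\bigl(f_! \, r_{n,|\om_{X/Y}|}(\phi)\bigr)(y)=\int r_{n,|\om_{X_y}|}\bigl(\phi|_{X_y(O/\fm^n)}\bigr),
\]
where $X_y$ is the (smooth, over $O$) fiber, and then applies Lemma~\ref{Weil-lem} directly to $X_y$. Your hands-on argument with an \'etale chart $X\to \A^{\dim Y}_O\times\A^d_O$ is correct and amounts to reproving that lemma in the relative setting; invoking it for the fiber is shorter.
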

  
\begin{proof}
We can assume that $\XX=X/G$, $\YY=Y/G$, where $G=\GL_N$, 
and the morphism $\XX\to \YY$ 
is induced by a smooth $G$-equivariant map $f:X\to Y$. The commutative diagrams
\begin{diagram}
\SS(X(O),|\om_{X/Y}|)&\rTo{f_!}& \SS(Y(O))\\
\dTo{}&&\dTo{}\\
\SS(\XX(K),|f^*\LL|^z\ot|\om_{\XX/\YY}|)&\rTo{f_!}&\SS(\YY(K),|\LL|^z)
\end{diagram}
\begin{diagram}
\SS(X(O/\fm^n))&\rTo{f_{O/\fm^n,*}}& \SS(Y(O/\fm^n))\\
\dTo{}&&\dTo{}\\
\SS(\XX(O/\fm^n))&\rTo{f_{O/\fm^n,*}}&\SS(\YY(O/\fm^n))
\end{diagram}
show that it is enough to prove our assertion with $\XX\to \YY$ replaced by $X\to Y$. We can also assume $\LL$ to be trivial.

Let us start with $\phi\in \SS(X(O/\fm^n))$ then $f_!E_{n,|\om_{X/Y}|}(\phi)$ is supported on $Y(O)$ and we need to compute its value at $y\in Y(O)$.
Let $X_y\sub X$ be the fiber over $y$ (which is a smooth variety over $O$). Then
$$f_!E_{n,|\om_{X/Y}|}(\phi)(y)=\int E_{n,|\om_{X_y}|}(\phi|_{X_y(O/\fm^n)}).$$
Applying Lemma \ref{Weil-lem}, we can rewrite the integral in terms of the summation of values of $\phi$ over $X_y(O/\fm^n)$, which gives the result.
\end{proof}

\section{Local Hecke algebras}\label{hecke-sec}

In this section we discuss local Hecke algebras associated with a group $G$ and a $2$-dimensional local field $K(\!(t)\!)$, as
well as the related algebras over $O$ and $O/\fm^N$.  

First, in Sec.\ \ref{hecke-def-sec}, \ref{coinv-sec} and \ref{crit-level-sec} we discuss 
Hecke algebras associated with $K(\!(t)\!)$ in the context of representations of $G(K(\!(t)\!))$ in pro-vector spaces.
Then in Sec.\ \ref{fin-hecke-sec}, we consider analogous algebras over $O$ and $O/\fm^N$. In Sec.\ \ref{hecke-hom-sec}
we construct homomorphisms connecting the three kinds of algebras. Finally, in Sec.\ \ref{coord-free-sec}, we use
these homomorphisms to define the {\it small Hecke algebra} over $O/\fm^N$, and we formulate our main theorem about
its commutativity (Theorem \ref{small-hecke-thm}).

Note that all constructions of this section are purely local and do not involve $G$-bundles on curves.

\subsection{Representations on pro-vector spaces and the local Hecke algebra over $K$}\label{hecke-def-sec}

Following \cite{GK1}, \cite{GK2}, 
we work in the framework of groups in $\SSet=\Ind(\Pro(\Ind(\Pro(Set_0))))$ (where $Set_0$ is the category of finite sets) and their representations in pro-vector spaces,
i.e., objects of $\VVect=\Pro(Vect)=\Pro(\Ind(Vect_0))$ (where $Vect_0$ is the category of finite-dimensional $\C$-vector spaces). 
Following \cite{GK1}, we write pro-objects of a category $\CC$ as $``\varprojlim" C_j$, where $C_j\in \CC$, to distinguish them from the projective limits taken in the category $\CC$.

As before, $G$ is a split connected reductive group over $\Z$.

Let $K$ be a local non-archimedean field. 
Then there is a natural group in $\SSet$ which we denote as $\G=G(K(\!(t)\!))$. Namely, 
$K(\!(t)\!)$ can be viewed as the ind-object $(t^{-n}K[\![t]\!])$, while $K[\![t]\!]$ is the pro-object $(K[t]/(t^n))$,
where each $K[t]/t^n$ is an object of $\bSet=\Ind(\Pro(Set_0))$.

Note that $\G$ comes from a group-ind-scheme $G(\!(t)\!)$ defined over $\Z$ (see \cite[Sec.\ 2.12]{GK1}).
We also fix a central extension $\hat{G}$ of $G(\!(t)\!)$ by $\G_m$ in the category of group-ind-schemes, equipped with
a splitting over $G[\![t]\!]$, and denote by 
$\hat{\G}$ the corresponding extension of $\G$ by $K^*$ (see \cite[Sec.\ 2.14]{GK1}). 

We consider the subgroup $\H:=G(K[\![t]\!])$ of $\G$ (also in $\SSet$), and the natural congruence subgroups
$\G^i\sub\H$, $i\ge 0$ (see \cite[Sec.\ 2.12]{GK1}).
Note that $\H$ is a group object of $\Pro(\bSet)$, in fact, it is given
by the projective system $(G(K[t]/(t^n)))$ of groups in $\bSet$. Note that $\H$ is a {\it thick } subgroup of $\G$ in the sense of \cite[2.12]{GK1},
and we have a splitting of the central extension $\hat{\G}$ over $\H$. 

Let $\Rep_c(\hat{\G})$ denote the category of $\hat{\G}$-representations in pro-vector spaces at level $c$, where $c:K^*\to \C^*$ is a character (see \cite[2.14]{GK1}). 
The quotient $\G/\H\in \Ind(\bSet)$ is {\it ind-compact}, i.e., a direct system of compact
objects of $\bSet$ (see \cite[3.4]{GK1}). In this case there is a natural induction functor 
$$i_{\H}^{\hat{\G}}:\Rep(\H)\to \Rep_c(\hat{\G}),$$ 
forming an adjoint pair $(r_{\H}^{\hat{\G}},i_{\H}^{\hat{\G}})$ 
with the corresponding restriction functors (see \cite[Prop.\ 3.5]{GK1}).

We are interested in the induced representation $i_{\H}^{\hat{\G}}(\C)$,
where $\C$ is the trivial representation of $\H$. 

\begin{definition}\label{hecke-alg-K-def}
We define the local Hecke algebra at the level $c$ by
$$\HH(\hat{\G},\H)_c:=\End_{\Rep_c(\hat{\G})}(i_\H^{\hat{\G}}(\C))^{op}$$
\end{definition}

Using the adjunction of the restriction and the induction functors, we can rewrite this definition as
$$\HH(\hat{\G},\H)_c=\Hom_{\H}(i_\H^{\hat{\G}}(\C),\C).$$

More concretely, let us consider the affine Grassmannian $\Gr_G=G(\!(t)\!)/G[\![t]\!]$ (defined over $\Z$), and represent it as the union of finite-dimensional proper schemes $\ov{\Gr}_\la$
(where $\la$ runs over dominant coweights).
The central extension $\hat{G}\to G(\!(t)\!)$ induces a $\G_m$-torsor $\hat{G}/G[\![t]\!]$ over $\Gr_G$ (equipped with a $G[\![t]\!]$-equivariant structure), which we denote by $\LL$.
Taking the push-out with respect to the character $c:K^*\to \C^*$ we get a complex line bundle $\LL_c$ on $\Gr_G(K)$,
and we can identify $i_\H^{\hat{\G}}(\C)$ with the projective limit  
$$i_\H^{\hat{\G}}(\C)=``\varprojlim" \SS(\ov{\Gr}_\la(K),\LL_c).$$
Hence, as a vector space, $\HH(\hat{\G},\H)_c$ can be identified with the inductive limit,
$$\HH(\hat{\G},\H)_c=\varinjlim \Hom_{\H}(\SS(\ov{\Gr}_\la(K),\LL_c),\C).$$

%

\subsection{Endomorphisms of the functor of coinvariants}\label{coinv-sec}

By \cite[Prop.\ 2.5]{GK1}, we have the functor of coinvariants,
$$\Coinv_{\H}:\Rep(\H)\to \VVect,$$
left adjoint to the functor $\triv_{\H}:\VVect\to \Rep(\H)$ of the trivial representations. 
We want to identify the endomorphisms of the composed functor
$$\Coinv_{\H}r^{\hat{\G}}_{\H}:\Rep_c(\hat{\G})\to \VVect$$
with the opposite algebra to $\HH(\hat{\G},\H)_c$.

First, we observe that the functor $\Coinv_{\H}r^{\hat{\G}}_{\H}$ is left adjoint to the composition 
$$i^{\hat{\G}}_{\H}\triv_{\H}:\VVect\to \Rep_c(\hat{\G}).$$
Hence, we have a natural identification of algebras
$$\End(\Coinv_{\H}r^{\hat{\G}}_{\H})\simeq \End(i^{\hat{\G}}_{\H}\triv_{\H})^{op}.$$
Now we consider the natural evaluation map
\begin{equation}\label{end-hecke-map}
\End(i^{\hat{\G}}_{\H}\triv_{\H})^{op}\to \End_{\Rep_c(\hat{\G})}(i^{\hat{\G}}_{\H}(\C))^{op}=\HH(\hat{\G},\H)_c.
\end{equation}

\begin{prop}
The map \eqref{end-hecke-map} is an isomorphism.
\end{prop}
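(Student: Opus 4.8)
The plan is to exhibit an explicit inverse to the evaluation map \eqref{end-hecke-map}, using the fact that a natural transformation of functors out of $\VVect$ is determined by its value on the unit object $\C$. More precisely, since $\Coinv_\H r^{\hat\G}_\H$ is left adjoint to $i^{\hat\G}_\H\triv_\H$, we already have the canonical isomorphism $\End(\Coinv_\H r^{\hat\G}_\H)\simeq \End(i^{\hat\G}_\H\triv_\H)^{op}$, so it suffices to show that the evaluation-at-$\C$ map $\End(i^{\hat\G}_\H\triv_\H)\to \End_{\Rep_c(\hat\G)}(i^{\hat\G}_\H(\C))$ is an isomorphism of algebras. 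The point is that the functor $i^{\hat\G}_\H\triv_\H\colon \VVect\to \Rep_c(\hat\G)$ is \emph{$\VVect$-linear} (it is built from the colimit/induction construction of \cite{GK1}, which commutes with the $\VVect$-module structure, i.e.\ with tensoring by objects of $\VVect$), and for a $\VVect$-linear functor $F\colon \VVect\to \mcA$ with $\mcA$ a $\VVect$-module category, one has $F(V)\simeq F(\C)\ot V$ functorially, whence $\End(F)\simeq \End_{\mcA}(F(\C))$ by evaluation.

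Concretely, I would proceed as follows. First I would record that for any object $V\in \VVect$ there is a natural isomorphism $i^{\hat\G}_\H\triv_\H(V)\simeq i^{\hat\G}_\H(\C)\ot V$ in $\Rep_c(\hat\G)$; this is immediate from the description $i^{\hat\G}_\H(\C)=\text{``}\varprojlim\text{''}\,\SS(\ov{\Gr}_\la(K),\LL_c)$ together with the fact that induction from $\H$ (for the ind-compact quotient $\G/\H$) is computed levelwise and commutes with $-\ot V$ for $V$ a pro-(ind-finite-dimensional) vector space. Then, given $\alpha\in \End(i^{\hat\G}_\H\triv_\H)$, its component $\alpha_\C$ is an element of $\HH(\hat\G,\H)_c^{op}$; conversely, given $h\in \HH(\hat\G,\H)_c^{op}=\End_{\Rep_c(\hat\G)}(i^{\hat\G}_\H(\C))$, define $\beta(h)_V := h\ot \id_V$ under the identification above. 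Naturality in $V$ is formal, and $\beta(h)$ is visibly a natural transformation of the functor $i^{\hat\G}_\H\triv_\H$; the two assignments $\alpha\mapsto \alpha_\C$ and $h\mapsto \beta(h)$ are mutually inverse because $\alpha_V$ must equal $\alpha_\C\ot\id_V$ (apply naturality of $\alpha$ to the maps $\C\to V$ picking out elements, or more cleanly use that $\alpha$ is automatically $\VVect$-linear since every morphism in $\VVect$ is $\C$-linear). Finally I would check that evaluation at $\C$ is an algebra homomorphism for the composition of natural transformations versus composition of endomorphisms — this is where the $op$ is bookkept — so that $\beta$ is an algebra isomorphism onto $\HH(\hat\G,\H)_c^{op}$, and dualizing gives \eqref{end-hecke-map}.

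The main obstacle I expect is making precise the claim that $i^{\hat\G}_\H\triv_\H$ commutes with tensoring by objects of $\VVect$, and that the evaluation map respects composition, within the $\Ind$-$\Pro$ formalism of \cite{GK1}: one must be careful that the induction functor $i^{\hat\G}_\H$ out of $\Rep(\H)$ is defined via a colimit over the ind-compact pieces of $\G/\H$, and interchanging that colimit with the limit defining a pro-vector space $V=\text{``}\varprojlim\text{''}V_j$ requires invoking the relevant compatibility (e.g.\ \cite[Prop.\ 3.5]{GK1} and the exactness/continuity properties of induction for thick subgroups established in \cite[Sec.\ 3]{GK1}). Once that bookkeeping is in place, the argument is formal: it is the standard ``a $\VVect$-enriched functor out of $\VVect$ is determined by its value on the unit'' principle, and no genuinely new input beyond the cited results of \cite{GK1} is needed.
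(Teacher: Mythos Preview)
Your proposal is correct and follows essentially the same strategy as the paper: construct the inverse of evaluation by sending $h\in\End_{\Rep_c(\hat\G)}(i^{\hat\G}_\H(\C))$ to the natural transformation $V\mapsto h\ot\id_V$, using the identification $i^{\hat\G}_\H\triv_\H(V)\simeq i^{\hat\G}_\H(\C)\ot V$. The paper carries out exactly the technical verification you flag in your last paragraph, but instead of invoking an abstract $\VVect$-linearity statement it argues concretely: for $V\in Vect$ it embeds $V\hookrightarrow\prod_i\C e_i$, uses that $F=i^{\hat\G}_\H\triv_\H$ (being a right adjoint) preserves this product and monomorphism, and then checks by hand---writing $h$ as a compatible system $h_\la:\SS_{\phi(\la)}\to\SS_\la$---that the diagonal endomorphism $\prod_i h$ of $\prod_i F(\C e_i)$ preserves the sub-pro-object $F(V)=``\varprojlim"(\SS_\la\ot V)$ and restricts there to $h_\la\ot\id_V$; passage to general $V\in\VVect$ then uses that $F$ commutes with projective limits. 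So the paper's argument is a hands-on substitute for your appeal to $\VVect$-module compatibility of induction, but the content is identical.
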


\begin{proof}
We will construct a map in the opposite direction.
Let us set $F:=i^{\hat{\G}}_{\H}\triv_{\H}$. First, we claim that every $h\in \End_{\Rep_c(\hat{\G})}(F(\C))$
induces an endomorphism $h_V$ of $F(V)$ for every (usual) vector space $V$ that is uniquely characterized
by the property that it is compatible with $h$ via any linear map $\C\to V$.

To construct $h_V$ we choose a basis in $V$, so $V=\bigoplus_i \C e_i$. Let us consider the corresponding embedding
$V\hra \prod_i \C e_i$. Since $F$ is a right adjoint functor, it commutes with products and is left exact, hence, we get the induced embedding
$$F(V)\hra \prod_i F(\C e_i).$$ 
Now the endomorphism $h$ induces an endomorphism $\prod_i h$ of $\prod_i F(\C e_i)$. We claim that it preserves the subobject $F(V)$.
The pro-vector space underlying $F(V)$ is given by 
$$F(V)=``\varprojlim" (\SS_\la\ot V)=``\varprojlim" (\bigoplus_i \SS_\la\ot \C e_i).$$
where we set $\SS_\la:=\SS(\ov{\Gr}_\la(K),\LL_c)$.
On the other hand, as a pro-vector space, $\prod_i F(\C e_i)$ is given by
$$\prod_i F(\C e_i)=``\varprojlim" (\prod_i \SS_\la\ot \C e_i).$$

Now by definition, we have
$$\Hom_{\VVect}(``\varprojlim" \SS_\mu, ``\varprojlim" \SS_\la)=\varprojlim_\la \varinjlim_\mu \Hom(\SS_\mu,\SS_\la).$$
Thus, $h$ is given by a collection of linear maps $h_{\la}:\SS_{\phi(\la)}\to \SS_\la$, for some function $\phi:\La\to \La$,
that are compatible in the following sense: for any $\la'<\la$ there exists a sufficiently large $\mu$ such that the compositions
$$\SS_\mu\to \SS_{\phi(\la)}\rTo{h_\la} \SS_\la\to \SS_{\la'} \ \text{ and}$$
$$\SS_\mu\to \SS_{\phi(\la')}\rTo{h_{\la'}} \SS_{\la'}$$
are the same.
Now the
diagonal endomorphism $\prod_i h$ of $\prod_i F(\C e_i)$ is given by the collection of maps
$$\prod_i h_{\la}: \prod_i \SS_{\phi(\la)}\ot \C e_i\to \prod_i \SS_\la \ot \C e_i.$$
It is clear that these maps preserve the subspaces obtained by replacing direct products by direct sums.
The corresponding endomorphism of the pro-vector space underlying $F(V)$ is given by
$$h_\la\ot \id:\SS_{\phi(\la)}\ot V\to \SS_\la\ot V.$$
Our argument shows that it is in fact compatible with the $\hat{\G}$-action.

It is easy to check that the constructed $h_V$ is compatible with arbitrary linear maps $V'\to V$, and in particular, is uniquely determined.
Since $F$ commutes with projective limits, we can now construct $h_V\in \End(F(V))$ for any pro-vector space $V$.
One also easily checks that the obtained map is inverse to \eqref{end-hecke-map}.
\end{proof}

Next, we will give an explicit formula for the action of the Hecke algebra on coinvariants of some $\hat{\G}$-representations.
Recall that $V\in \Rep_c(\hat{\G})$ is called {\it admissible} if for every congruence-subgroup $\G^i\sub \H$, the pro-vector space of coinvariants $V_{\G^i}$ is a usual
vector space.
The action map $(g,v)\mapsto g^{-1}v$ of $\hat{\G}$ induces a collection of weight-$c$ maps
\begin{equation}\label{Gr-action-maps}
\a_{\la}:\tot_{\Gr_\la}(\LL)\times V_{\G_{i(\la)}}\to V_{\H},
\end{equation}
for some function $i(\la)$, where $\tot_{\Gr_\la}(\LL)$ is the total space of the $\G_m$-bundle corresponding to $\LL$.

\begin{lemma}\label{coinv-action-lem}
For any admissible $V\in \Rep_{c}(\G)$, consider the unit for the adjoint pair $(\Coinv_\H r^{\hat{\G}}_{\H},i^{\hat{\G}}_{\H}\triv_{\H})$,
$$u_V:V\to i^{\hat{\G}}_{\H}(V_{\H})\simeq ``\varprojlim" (\SS(\ov{\Gr}_\la,\LL_{c})\ot V_{\H}).$$
Then for any $h\in \HH(\G,\H)_{c}$ and any admissible $V\in \Rep_{c}(\G)$, the corresponding endomorphism $h_V\in \End(V_{\H})$ is determined from the
commutative diagram
\begin{diagram}
V &\rTo{}& V_{\H}\\
\dTo{u_V}&&\dTo{h_V}\\
``\varprojlim" (\SS(\ov{\Gr}_\la,\LL_{c})\ot V_{\H})&\rTo{h_0\ot\id}& V_\H.
\end{diagram}
where $h_0\in \Hom_{\H}(i^{\hat{\G}}_{\H}(\C),\C)$ corresponds to $h$.

More explicitly, if $h$ comes from $h_0\in \Hom_{\H}(\SS(\ov{\Gr}_\la,\LL_{c}),\C)$, then $h_V$ is induced by the composition
$$V_{\G_{i(\la)}}\rTo{\a_\la^*} \SS(\ov{\Gr}_\la,\LL_{c})\ot V_{\H}\rTo{h_0\ot\id} V_\H.$$
\end{lemma}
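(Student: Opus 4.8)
The statement is essentially a naturality assertion: the isomorphism of the previous proposition, $\HH(\hat\G,\H)_c \simeq \End(i_\H^{\hat\G}\triv_\H)^{op}$, turns an element $h$ into a natural endofunctor-transformation of $F = i_\H^{\hat\G}\triv_\H$, and the claim is simply to spell out what this transformation does when composed with the unit of the adjunction $(\Coinv_\H r_\H^{\hat\G}, F)$. So the first step is to recall that $h_V \in \End(V_\H)$ is, by construction in the previous proposition, the endomorphism through which $h \in \End(F(\C))$ acts on $F(V_\H)$, and then to apply the general adjunction yoga: for the adjoint pair $(\Coinv_\H r_\H^{\hat\G}, F)$ with unit $u_V : V \to F(V_\H)$, naturality of $h$ as a transformation of $F$ gives a commuting square relating $u_V$, $u_{V_\H}$ (or rather the map induced by $h$ on $F(\C)$ tensored with $V_\H$) and $h_V$. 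Concretely, $h_{V_\H}\circ u_V = u_V' $-type manipulations reduce the first diagram to the statement that $h_V$ is obtained by postcomposing $u_V$ with $(h_0\otimes \id_{V_\H})$, where $h_0 : i_\H^{\hat\G}(\C) \to \C$ is the image of $h$ under the adjunction isomorphism $\HH(\hat\G,\H)_c = \Hom_\H(i_\H^{\hat\G}(\C),\C)$ recorded right after Definition \ref{hecke-alg-K-def}. I would present this as: apply the counit/unit triangle identities to identify $h_V$ with the composite $V \xrightarrow{u_V} F(V_\H) \xrightarrow{h_0\otimes\id} V_\H$, using that $F(V_\H) \simeq {}``\varprojlim"\, \SS(\ov\Gr_\la,\LL_c)\otimes V_\H$ by the explicit description of $i_\H^{\hat\G}$ from Sec.\ \ref{hecke-def-sec}.

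**The explicit formula.** For the "more explicit" part I would unwind the isomorphism $\HH(\hat\G,\H)_c = \varinjlim_\la \Hom_\H(\SS(\ov\Gr_\la(K),\LL_c),\C)$: an element represented by $h_0 \in \Hom_\H(\SS(\ov\Gr_\la,\LL_c),\C)$ corresponds, under the projective-limit description $F(\C) = {}``\varprojlim"\,\SS(\ov\Gr_\la,\LL_c)$, to the endomorphism of $F(\C)$ whose $\la$-component is $h_0$ composed with the natural projection. Then $h_0\otimes\id : F(V_\H) \to V_\H$ is really the $\la$-th coordinate projection $F(V_\H) \to \SS(\ov\Gr_\la,\LL_c)\otimes V_\H$ followed by $h_0\otimes\id_{V_\H}$. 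So it remains to identify the $\la$-component of $u_V : V \to F(V_\H)$ — i.e., the map $V \to \SS(\ov\Gr_\la,\LL_c)\otimes V_\H$ — with the map $\a_\la^*$ arising from the action maps \eqref{Gr-action-maps}. This is where admissibility of $V$ enters: it guarantees $V_{\G_{i(\la)}}$ is an honest vector space, so the weight-$c$ action map $\a_\la : \tot_{\Gr_\la}(\LL)\times V_{\G_{i(\la)}} \to V_\H$ dualizes to a map $\a_\la^* : V_{\G_{i(\la)}} \to \SS(\ov\Gr_\la,\LL_c)\otimes V_\H$, and I claim this is precisely the $\la$-component of the unit $u_V$ (precomposed with $V \to V_{\G_{i(\la)}}$). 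This identification is basically the definition of the induction functor $i_\H^{\hat\G}$ as a space of sections on $\Gr_G$ together with the standard description of the unit of $(r_\H^{\hat\G}, i_\H^{\hat\G})$ — or rather of the closely related pair $(\Coinv_\H r_\H^{\hat\G}, i_\H^{\hat\G}\triv_\H)$ — in terms of the $\hat\G$-action; one simply traces through \cite[Prop.\ 3.5]{GK1}.

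**Main obstacle.** The conceptual content is light; the real work is bookkeeping with the nested pro/ind structures and making sure the compatibility maps in $``\varprojlim"$ are respected, i.e., that the collection of maps $\a_\la^*$ for varying $\la$ genuinely assembles into a morphism of pro-vector spaces equal to $u_V$, and that the function $i(\la)$ (governing which congruenz-subgroup coinvariants are needed at level $\la$) is compatible with the function $\phi(\la)$ appearing in the description of $\Hom_{\VVect}$ from the previous proof. The subtle point is verifying that $h_0\otimes\id$ and $u_V$ are "aligned" at the right index — that one can choose $\mu$ large enough that the $\mu$-level data computes both sides — but this is exactly the kind of cofinality argument already carried out in the proof of the previous proposition, so I would invoke it rather than redo it. I expect no genuine difficulty beyond this indexing care, and the proof should be a half-page: state the two diagrams follow from adjunction naturality plus the explicit form of $i_\H^{\hat\G}(\C)$ as $``\varprojlim"\,\SS(\ov\Gr_\la,\LL_c)$, then identify the $\la$-component of the unit with $\a_\la^*$ directly from the construction of the induction functor.
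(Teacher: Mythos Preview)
Your proposal is correct and follows essentially the same route as the paper: both arguments unwind the adjunction $(\Coinv_\H r^{\hat\G}_\H, i^{\hat\G}_\H\triv_\H)$ using the unit $u_V$ and the counit, identify the resulting composition with $h_0\otimes\id$ (via the description of $h_W$ on $i^{\hat\G}_\H(W)$ as $(h_\la\otimes\id_W)$ from the preceding proposition), and then read off the explicit formula by identifying the $\la$-component of $u_V$ with $\a_\la^*$. The only cosmetic difference is that the paper writes out the counit step explicitly as $V_\H\to (i^{\hat\G}_\H(V_\H))_\H\to V_\H$ and closes with the small naturality square between $V\to V_\H$ and $\Coinv_\H(u_V)$, whereas you package this as ``triangle identities''; the content is the same.
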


\begin{proof}
Recall that $h$, viewed as an endomorphism of $i^{\hat{\G}}_{\H}(\C)$ in $\Rep_{c}(\G)$, can be represented by a collection of linear maps
$h_\la:\SS_{\phi(\la)}\to \SS_\la$, where $\SS_\la=\SS(\ov{\Gr}_\la,\LL_{c})$, and for any $W\in Vect$, the corresponding endomorphism $h_W$ of
$i^{\hat{\G}}_{\H}(W)$ is given by the maps $(h_{\la}\ot\id_W)$.

Recall that we use the isomorphism $\End(\Coinv_{\H}r^{\hat{\G}}_{\H})\simeq \End(i^{\hat{\G}}_{\H}\triv_{\H})^{op}$ to define the action of $\HH(\G,\H)_{c}$ on $\Coinv_{\H}$.
Hence, for any $V\in\Rep_{c}(\G)$, the endomorphism $h_V:V_{\H}\to V_{\H}$ corresponds by adjunction to the composed map
$$V\rTo{u_V} i^{\hat{\G}}_{\H}(V_{\H})\rTo{h_{V_{\H}}} i^{\hat{\G}}_{\H}(V_{\H}).$$
This means that $h_V$ is the composition
$$V_{\H}\rTo{\Coinv_\H(u_V)} (i^{\hat{\G}}_{\H}(V_{\H}))_\H \rTo{(h_{V_{\H}})_\H} (i^{\hat{\G}}_{\H}(V_{\H}))_\H\rTo{\eps_{V_{\H}}} V_{\H},$$
where $\eps_W$ is the counit of adjunction.

It is easy to see that for any $W\in Vect$, the composition
$$i^{\hat{\G}}_{\H}(W)\to (i^{\hat{\G}}_{\H}(W))_\H \rTo{(h_W)_\H} (i^{\hat{\G}}_{\H}(W))_\H\rTo{\eps_{W}} W$$
is given by $h_0\ot \id_W$. Now the assertion follows immediately from the commutative diagram
\begin{diagram}
V &\rTo{}& V_{\H}\\
\dTo{u_V}&&\dTo_{\Coinv_{\H}(u_V)}\\
i^{\hat{\G}}_{\H}(V_{\H})&\rTo{}& (i^{\hat{\G}}_{\H}(V_{\H}))_\H
\end{diagram}
\end{proof}


\subsection{Critical level and integration}\label{crit-level-sec}

Assuming that the commutator subgroup $[G,G]$ is simply connected, let us consider the central extension $\hat{G}_{crit}\to G(\!(t)\!)$ at the {\it critical level}. It corresponds to
the invariant form $-B/2$ on the Lie algebra $\fg$ of $G$, where $B$ is the Killing form, and is characterized by the
fact that the corresponding $G[\![t]\!]$-equivariant line bundle $\LL_{crit}$ on the affine Grassmannian $\Gr_G$ satisfies 
\begin{equation}\label{L-crit-isom}
\LL_{crit}|_{\Gr_\la}\simeq \om_{\Gr_\la} \ot L_\la,
\end{equation}
where $L_\la$ is a $1$-dimensional vector space depending multiplicatively on $\la$ (see \cite{BD}, \cite[Thm.\ 2.4]{BK-Hecke}, \cite[Thm.\ 5.1]{BK}).
By definition, the action of $G(\!(t)\!)$ lifts to an action of $\hat{G}_{crit}$ on $\LL_{crit}$ such that $\G_m$ acts by the identity character.

Recall that the variety $\ov{\Gr}_\la$ is Gorenstein and has rational singularities (see \cite{Faltings}, \cite[Thm.\ 2.2]{BK-Hecke}). This implies that the isomorphism \eqref{L-crit-isom}
extends to a similar isomorphism on $\ov{\Gr}_\la$ (see \cite[Thm.\ 2.5]{BK-Hecke}).

We denote by $\Rep_{crit}(\G):=\Rep_{|\cdot|}(\hat{\G}_{crit})$ the category of representations of the central extension $\hat{\G}=\hat{G}_{crit}(K)$ in pro-vector spaces such that $K^*$ acts via $|\cdot |:K^*\to \C^*$, and by $\HH(\G,\H)_{crit}:=\HH(\hat{G}_{crit},\H)_{|\cdot|}$ the corresponding local Hecke algebra.

\begin{definition} For $\la\in \La_+$, we denote by 
$$h^\la\in \Hom_{\H}(\SS(\ov{\Gr}_\la(K),|\LL_{crit}|),\C)\ot L_\la\sub \HH(\G,\H)_{crit}\ot L_\la$$
the element given by the absolutely convergent integral
$$\mu\mapsto \int_{\Gr_\la(K)}\mu,$$
where we view elements of $\SS(\Gr_\la(K),|\LL_{crit}|)$ as smooth $L_\la$-valued measures on $\Gr_\la(K)$.
\end{definition}

The absolute convergence in the above definition follows from the fact that $\ov{\Gr}_\la$ has rational singularities
(see \cite[Sec.\ 3.4]{AA}).
The next theorem is proved in \cite{BK-Hecke}.

\begin{theorem}\label{BK-thm}
The elements $(h^\la)$ commute in $\HH(\G,\H)_{crit}$. 
\end{theorem}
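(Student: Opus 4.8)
\textbf{Proof proposal for Theorem~\ref{BK-thm} (commutativity of the $(h^\la)$ in $\HH(\G,\H)_{crit}$).}

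The plan is to reduce the commutativity of the operators $h^\la$ to a geometric statement about the affine Grassmannian, exploiting the fact that at the critical level the line bundle $\LL_{crit}$ is (up to a one-dimensional twist $L_\la$) the canonical bundle on each $\ov{\Gr}_\la$, so that the elements $h^\la$ are honest ``integration over $\Gr_\la(K)$'' operators and do not depend on any auxiliary choices. First I would make the product $h^\la\cdot h^\mu$ explicit. Using the description of $\HH(\G,\H)_{crit}$ as $\varinjlim_\la \Hom_\H(\SS(\ov{\Gr}_\la(K),|\LL_{crit}|),\C)$ and the convolution structure on the Hecke algebra coming from composition of endomorphisms of $i_\H^{\hat\G}(\C)$, the product $h^\la\ast h^\mu$ is computed on the convolution Grassmannian $\ov{\Gr}_\la\,\widetilde\times\,\ov{\Gr}_\mu$ with its two maps to $\Gr_G$ (the projection $p$ to the first factor and the multiplication map $m$), via pull-push: $h^\la\ast h^\mu$ is represented by integration along the fibres of $m:\ov{\Gr}_\la\,\widetilde\times\,\ov{\Gr}_\mu\to \ov{\Gr}_{\le\la+\mu}$, using the relative canonical bundle identification $m^*\LL_{crit}\simeq \om_{\ov{\Gr}_\la\widetilde\times\ov{\Gr}_\mu}\otimes L_{\la+\mu}$ that follows from \eqref{L-crit-isom} and the factorization of $\om$ over the convolution diagram. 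Here the push-forward of measures is legitimate because the convolution variety is Gorenstein with rational singularities (as $\ov{\Gr}_\la$ and $\ov{\Gr}_\mu$ are, by \cite{Faltings}, \cite{BK-Hecke}), so the integral $\int_{(\ov{\Gr}_\la\widetilde\times\ov{\Gr}_\mu)(K)}$ converges absolutely and depends only on the variety, not on the order of the factors.

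The heart of the argument is then a symmetry statement: the two convolution varieties $\ov{\Gr}_\la\,\widetilde\times\,\ov{\Gr}_\mu$ and $\ov{\Gr}_\mu\,\widetilde\times\,\ov{\Gr}_\la$ are canonically isomorphic as varieties over $\ov{\Gr}_{\le\la+\mu}$ via $m$, compatibly with the identifications of their canonical bundles with $m^*\LL_{crit}\otimes L_{\la+\mu}^{-1}$. This is where the critical-level normalization is essential: an arbitrary level would produce a twisting line bundle on the convolution variety that distinguishes the two orderings, but the canonical bundle is intrinsic, so both convolutions give the \emph{same} measure on $(\ov{\Gr}_{\le\la+\mu})(K)$ after pushing forward. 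Concretely, I would show that the birational map $\ov{\Gr}_\la\,\widetilde\times\,\ov{\Gr}_\mu \dashrightarrow \ov{\Gr}_\mu\,\widetilde\times\,\ov{\Gr}_\la$ over $\ov{\Gr}_{\le\la+\mu}$, which is an isomorphism over the open locus where the relevant lattice positions are transverse, extends to an equality of integrals by the rational-singularities convergence estimates (the non-transverse locus has measure zero and the integrands are bounded near it). This makes $h^\la\ast h^\mu = h^\mu\ast h^\la$ as elements of $\HH(\G,\H)_{crit}\otimes L_{\la+\mu}$, i.e.\ after identifying $L_\la\otimes L_\mu\simeq L_{\mu}\otimes L_\la\simeq L_{\la+\mu}$ via the multiplicative structure.

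The main obstacle I anticipate is making the pull-push description of convolution in the Hecke algebra $\HH(\G,\H)_{crit}$ rigorous in the pro-vector-space framework of \cite{GK1}, \cite{GK2} — that is, identifying the abstract composition of endomorphisms of $i_\H^{\hat\G}(\C)$ with an integral over the convolution Grassmannian, including keeping track of the line bundle twists $L_\la$ and the relative dualizing sheaves, and checking that all the push-forward maps \eqref{f!-var-eq} used are well-defined on the (possibly singular, but Gorenstein with rational singularities) convolution varieties, invoking the integration formalism of \cite[Sec.\ 3.4]{AA} and the structure assumptions recorded before Lemma~\ref{Weil-lem}. Once the convolution is geometrized this way, the commutativity becomes the transparent statement that the ``total space'' of the two-step Hecke correspondence and its canonical measure are insensitive to which coweight is applied first. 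Since the paper attributes the full proof to \cite{BK-Hecke}, I would cite that reference for the technical verification and only sketch the geometric mechanism here.
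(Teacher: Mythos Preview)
The paper gives no proof of this theorem; it simply records the statement and attributes the argument to \cite{BK-Hecke}. You explicitly acknowledge this at the end of your proposal, so in that sense there is nothing to compare: your sketch is an attempt to reconstruct what happens in \cite{BK-Hecke}, not to reproduce anything in the present paper.

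Your reconstruction is broadly faithful to the strategy of \cite{BK-Hecke}: one realizes $h^\la\ast h^\mu$ as integration of $m^*\phi$ over the convolution variety $\ov{\Gr}_\la\,\widetilde\times\,\ov{\Gr}_\mu$, uses that at the critical level $m^*\LL_{crit}$ is identified (up to the one-dimensional twist $L_\la\otimes L_\mu\simeq L_{\la+\mu}$) with the dualizing sheaf of the convolution variety, and then invokes the Gorenstein/rational-singularities machinery so that the integral is insensitive to the choice of birational model over $\ov{\Gr}_{\le\la+\mu}$. Both orderings give the same functional, namely $h^{\la+\mu}$, and commutativity follows.

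One point to tighten: your sentence asserting that $\ov{\Gr}_\la\,\widetilde\times\,\ov{\Gr}_\mu$ and $\ov{\Gr}_\mu\,\widetilde\times\,\ov{\Gr}_\la$ are ``canonically isomorphic as varieties over $\ov{\Gr}_{\le\la+\mu}$'' is false in general --- they are only birational over the base (isomorphic over the open cell $\Gr_{\la+\mu}$). You effectively retract this in the next sentence, where you work with the birational map and argue that the exceptional loci have measure zero; that weaker statement is what is actually used, and it suffices. The substantive technical point you do not spell out, and which carries real content in \cite{BK-Hecke}, is that the identification $m^*\LL_{crit}\simeq \om_{\ov{\Gr}_\la\widetilde\times\ov{\Gr}_\mu}\otimes L_{\la+\mu}$ holds on the whole convolution variety and not merely over the open cell (i.e.\ the convolution map is crepant); this is what makes the rational-singularities reduction legitimate. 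With that caveat, your outline is essentially the argument of \cite{BK-Hecke}.
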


\subsection{Local Hecke algebras over $O$ and $O/\fm^N$}\label{fin-hecke-sec}

The analogs of the constructions and results of Sections \ref{hecke-def-sec} and \ref{coinv-sec} also hold if we replace $K$ with $O$ or with $O/\fm^N$.

Namely, we consider the group $\G_O:=G(O(\!(t)\!))$ in $\SSet$ and its subgroup $\H_O:=G(O[\![t]\!])$, as well as the group
$\G_N:=G(O/\fm^N(\!(t)\!))$ with the subgroup $\H_N:=G(O/\fm^N[\![t]\!])$.
(Note that we do not consider central extensions of $\G_O$ and $\G_N$, just the usual categories of representations.)

Similarly to Definition \ref{hecke-alg-K-def}, we define local Hecke algebras
$$\HH(\G_O,\H_O):=\End_{\Rep(\G_O)}(i_{\H_O}^{\G_O}(\C))^{op}, \ \ \HH(\G_N,\H_N):=\End_{\Rep(\G_N)}(i_{\H_N}^{\G_N}(\C))^{op}.$$
We still have the identifications
$$\HH(\G_O,\H_O)=\Hom_{\H_O}(i_{\H_O}^{\G_O}(\C),\C)=\varinjlim \Hom_{\H_O}(\SS(\ov{\Gr}_\la(O)),\C),$$
$$\HH(\G_N,\H_N)=\Hom_{\H_N}(i_{\H_N}^{\G_N}(\C),\C)=\varinjlim \Hom_{\H_N}(\SS(\ov{\Gr}_\la(O/\fm^N)),\C).$$


Next, we will show how to identify $\HH(\G_N,\H_N)$ with the algebra $\HH_{G,O/\fm^N}$ of $G(O/\fm^N[\![t]\!])$-biinvariant distributions  with compact support on 
$G(O/\fm^N(\!(t)\!))$. Indeed, 
the group $G(O/\fm^N[\![t]\!])$ acts transitively on the fibers of the map $G(O/\fm^N(\!(t)\!))\to \Gr(O/\fm^N)$. Hence,
the orbits of $G(O/\fm^N[\![t]\!])$ on $\Gr(O/\fm^N)$ are in bijection with the double $G(O/\fm^N[\![t]\!])$-cosets on $G(O/\fm^N(\!(t)\!))$. 
Since the delta-functions of the former orbits form a basis in $\HH(\G_N,\H_N)$, while the delta-functions of the latter double cosets form a basis in $\HH_{G,O/\fm^N}$,
we get an identification
$$\nu:\HH(\G_N,\H_N)\rTo{\sim} \HH_{G,O/\fm^N}.$$
More explicitly, given a $G(O/\fm^N[\![t]\!])$-invariant functional $\de$ on $\SS(\ov{\Gr}_\la(O/\fm^N))$, we construct a $G(O/\fm^N[\![t]\!])$-biinvariant distribution on
$G(O/\fm^N(\!(t)\!))$ as follows: starting with a smooth function on $G(O/\fm^N(\!(t)\!))$ we restrict it to
$G(O/\fm^N(\!(t)\!))_{\le \la}$, the preimage of $\ov{\Gr}_\la(O/\fm^N)$, then integrate along the fibers of the map
$G(O/\fm^N(\!(t)\!))_{\le \la}\to \ov{\Gr}_\la(O/\fm^N)$, and finally apply $\de$ to the resulting function.

\begin{prop} The map $\nu$ is an isomorphism of algebras. 
\end{prop}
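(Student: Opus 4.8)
The plan is to verify that the vector-space isomorphism $\nu$ intertwines the two algebra structures by unwinding what multiplication means on each side. On the side of $\HH(\G_N,\H_N)=\End_{\Rep(\G_N)}(i_{\H_N}^{\G_N}(\C))^{op}$, multiplication is composition of endomorphisms of the induced representation $i_{\H_N}^{\G_N}(\C)$. On the side of $\HH_{G,O/\fm^N}$, multiplication is convolution of $\H_N$-biinvariant compactly supported distributions on $\G_N$. The key point is that both can be described through the same geometric object: the convolution diagram
$$\G_N/\H_N \times \G_N/\H_N \leftarrow \G_N\times^{\H_N}\G_N/\H_N \rightarrow \G_N/\H_N,$$
or rather its truncations over the $\ov{\Gr}_\la(O/\fm^N)$, which are the $\H_N$-orbit closures in $\Gr(O/\fm^N)$.

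First I would recall the standard identification of the Hecke algebra of biinvariant distributions with $\Hom_{\H_N}(\SS(\Gr(O/\fm^N)),\C)$ where the algebra structure on the latter is given by the convolution: for functionals $\de_1,\de_2$, the product $\de_1 * \de_2$ is obtained by pulling a function on $\Gr(O/\fm^N)$ back along the two projections from the convolution space $\G_N\times^{\H_N}\Gr(O/\fm^N)$, integrating along the fibers of the left projection using $\de_2$ fiberwise, and then applying $\de_1$. This is exactly the description already given right before the statement for the map $\nu$ (restrict, integrate along fibers, apply $\de$), so one only needs to check that $\nu$ carries this convolution to composition. I would do this by using the interpretation of $\HH(\G_N,\H_N)$ as $\End(i_{\H_N}^{\G_N}\triv_{\H_N})^{op}$ acting on the functor of $\H_N$-coinvariants (the analog over $O/\fm^N$ of the material in Sec.\ \ref{coinv-sec}): an element $h\in\HH(\G_N,\H_N)$ acts on $V_{\H_N}$ for admissible $V$, and by the analog of Lemma \ref{coinv-action-lem}, if $h$ comes from $h_0\in\Hom_{\H_N}(\SS(\ov{\Gr}_\la(O/\fm^N)),\C)$ then $h_V$ is induced by $\a_\la^*$ followed by $h_0\ot\id$. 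Applying this to the regular representation $V=\SS(\G_N)$ (with its left $\G_N$-action), whose $\H_N$-coinvariants recover $\SS(\G_N/\H_N)=\SS(\Gr(O/\fm^N))$ and on which the right translation action of $\H_N$-biinvariant distributions is manifestly by convolution, identifies the composition of Hecke operators with convolution of the corresponding distributions.

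Concretely, the steps in order are: (1) set up the regular representation $\SS(\G_N)$ as an object on which both algebras act — the Hecke algebra $\HH(\G_N,\H_N)$ via the coinvariants formalism on the left-translation structure, and $\HH_{G,O/\fm^N}$ via right convolution — and check these two actions are faithful; (2) verify that under the identification $\SS(\G_N)_{\H_N}\simeq\SS(\Gr(O/\fm^N))$ the operator attached to $h\in\HH(\G_N,\H_N)$ agrees, as an endomorphism of $\SS(\Gr(O/\fm^N))$, with right convolution by the distribution $\nu(h)$, using the explicit formula of the $O/\fm^N$-analog of Lemma \ref{coinv-action-lem} and comparing it with the fiber-integration description of $\nu$; (3) conclude that a product $h'h$ (composition) goes to the convolution $\nu(h')*\nu(h)$ because composition of right-convolution operators is right-convolution by the convolution of distributions (in the appropriate, i.e.\ opposite-consistent, order — here one must be careful that $\HH(\G_N,\H_N)$ was defined with an ``$op$'' and match conventions so that the orders line up). The main obstacle I expect is precisely this bookkeeping of the opposite-algebra conventions together with the left/right action conventions: one must ensure that the ``$op$'' in the definition of $\HH(\G_N,\H_N)$, the choice of left- versus right-translation in realizing the regular representation, and the left- versus right-convolution in the distribution algebra all conspire to give an honest algebra isomorphism rather than an anti-isomorphism. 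Everything else — transitivity of $\H_N$ on the fibers of $\G_N\to\Gr(O/\fm^N)$, hence the bijection of orbits with double cosets, which gives that $\nu$ is a linear isomorphism — is already in place from the discussion preceding the statement, and the fiber integrations are legitimate because all the spaces involved are locally compact totally disconnected (everything being over the finite ring $O/\fm^N$, the relevant truncations $\ov{\Gr}_\la(O/\fm^N)$ are finite sets, so no convergence issues arise and all ``integrals along fibers'' are finite sums).
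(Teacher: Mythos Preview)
Your proposal is correct and follows essentially the same route as the paper: both reduce to the computation that if $h$ corresponds to $h_0\in\Hom_{\H_N}(\SS_{\la,N},\C)$ then the induced map $h_\la$ on each $\SS_{\la,N}$ is given by $h_\la(\phi)(g)=h_0(g^{-1}\phi)$, from which composition of endomorphisms visibly matches convolution of distributions.

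The one difference is that the paper works directly with $i_{\H_N}^{\G_N}(\C)=``\varprojlim"\,\SS_{\la,N}$ itself rather than passing through the regular representation $\SS(\G_N)$: since $\HH(\G_N,\H_N)$ is \emph{defined} as $\End(i_{\H_N}^{\G_N}(\C))^{op}$, faithfulness of the action on this object is tautological, so your Step~(1) is unnecessary and your Step~(2) becomes the entire proof. Your detour through $\SS(\G_N)$ and its $\H_N$-coinvariants only reproduces $\SS(\Gr(O/\fm^N))$, which is already sitting inside the induced representation; working there directly (as the paper does) spares you the left/right bookkeeping you flagged as the main obstacle.
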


\begin{proof}
To understand multiplication on $\HH(\G_N,\H_N)$ let us look again at the isomorphism
$$\End_{\Rep(\G_N)}(i_{\H_N}^{\G_N}(\C))\rTo{\sim} \varinjlim \Hom_{\H_N}(\SS_{\la,N},\C),$$
where $\SS_{\la,N}=\SS(\ov{\Gr}_\la(O/\fm^N))$ (these are finite dimensional $\H_N$-representations). 
We have $i_{\H_N}^{\G_N}(\C)=``\varprojlim" \SS_{\la,N}$,
and an endomorphism $h$ of this pro-vector space is given by a compatible collection 
$$h_\la:\SS_{\phi(\la),N}\to \SS_{\la,N}.$$

The fact that $h$ commutes with the $\G_N$-action allows us to recover $(h_\la)$ from $h_0:\SS_{\phi(0),N}\to \C$.
Namely, for each $\la,\mu$, the action map $(g,\phi)\mapsto (g^{-1}\phi)(x)=\phi(gx)$, induces a map
$$\a:\ov{\Gr}_{\la}(O/\fm^N)\times \SS_{\psi(\la,\mu),N}\to (\SS_{\mu,N})_{\H_N},$$
Hence, for each $\la$, the composition
$$\ov{\Gr}_{\la}(O/\fm^N)\times \SS_{\psi(\la,\phi(0)),N}\to (\SS_{\phi(0),N})_{\H_N}\rTo{h_0} \C$$
can be viewed as a map
$$\SS_{\psi(\la,\phi(0)),N}\to \SS_\la$$
which induces $h_\la$.

Now, for $h,h'\in \HH(\G_N,\H_N)$, the map $(h'h)_0$ is given as the composition
$$\SS_{\la'}\rTo{h_\la}\SS_{\la}\rTo{h'_0} \C.$$
As we have seen above, $h_\la(\phi)(g)=h_0(g^{-1}\phi)$, where $g^{-1}\phi(x)=\phi(gx)$.
Thus, $(h'h)_0(\phi)$ is obtained by applying $h'_0\ot h_0$ to $\phi(gx)\in \SS(\G_{N,\la'})\ot \SS_{\la,N}$,
where $\G_{N,\la'}\sub \G_N$ is the preimage of $\ov{\Gr}_{\la'}(O/\fm^N)$.
The latter definition is compatible with the usual convolution of distributions on $\G_N$.
\end{proof}

\subsection{Homomorphisms between local Hecke algebras}\label{hecke-hom-sec}

As in Sec.\ \ref{hecke-def-sec} and \ref{coinv-sec}, we continue to work with any central extension $\hat{G}$ and a level $c:K^*\to \C^*$.

First, we want to construct a natural homomorphism of algebras
$$\nu_{K,O}:\HH(\hat{\G},\H)_c\to \HH(\G_O,\H_O).$$

\begin{lemma}
One has a natural isomorphism of functors $\Rep(\H)\to \Rep(\G_O)$,
\begin{equation}\label{ind-K-O-isom}
r^{\hat{\G}}_{\G_O} i_{\H}^{\hat{\G}}\rTo{\sim} i_{\H_O}^{\G_O} r_{\H_O}^\H.
\end{equation}
Furthermore, the following diagram of functors $\Rep(\H)\to \Rep(\H_O)$ is commutative:
\begin{equation}\label{ind-K-O-diagram}
\begin{diagram}
r_{\H_O}^{\H}r_{\H}^{\hat{\G}}i_{\H}^{\hat{\G}}&\rTo{r_{\H_O}^{\H}(\can)}& r_{\H_O}^{\H}\\
\dTo{\sim}&&\dTo{\id}\\
r_{\H_O}^{\G_O}i_{\H_O}^{\G_O}r_{\H_O}^{\H}&\rTo{\can\circ r_{\H_O}^{\H}}& r_{\H_O}^{\H}
\end{diagram}
\end{equation}
where the left vertical arrow is induced by \eqref{ind-K-O-isom}.
\end{lemma}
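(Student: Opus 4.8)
The plan is to exploit the base-change properties of induction along the ind-compact quotient $\G/\H$, exactly parallel to \cite[Prop.\ 3.5]{GK1}, applied to the inclusion $\G_O \sub \hat{\G}$ (ignoring the central extension, which splits over $\H$ and over $\G_O$). First I would establish \eqref{ind-K-O-isom}. The key observation is that the natural map $\G_O/\H_O \to \G/\H$ is an isomorphism of ind-compact objects of $\Ind(\bSet)$: both sides are identified with $\Gr_G$ with its points taken over $O$, respectively $K$, and since $\Gr_G = \bigcup_\la \ov{\Gr}_\la$ with $\ov{\Gr}_\la$ proper over $\Z$, we have $\ov{\Gr}_\la(O) = \ov{\Gr}_\la(K)$ (properness, or rather the valuative criterion). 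Concretely, after writing $i_\H^{\hat\G}(\C) = ``\varprojlim" \SS(\ov{\Gr}_\la(K),\LL_c)$ and $i_{\H_O}^{\G_O}(\C) = ``\varprojlim" \SS(\ov{\Gr}_\la(O))$, and more generally $i_\H^{\hat\G}(M) = ``\varprojlim"\SS(\ov{\Gr}_\la(K),\LL_c)\ot M$ for $M \in \Rep(\H)$ with the same formula over $O$, the desired isomorphism \eqref{ind-K-O-isom} becomes the statement that restricting the $K$-induced representation to $\G_O$ recovers the $O$-induced representation of the restricted module. Here one uses that the $\G_O$-action on $\SS(\ov{\Gr}_\la(K),\LL_c)$ only sees the subspace of functions supported on $\ov{\Gr}_\la(O)$ — no, more precisely, the $\G_O$-orbit structure on $\Gr_G(K)$: the construction of the induction functor as a (pro-)space of sections, together with the fact that $\G_O$ acts on $\Gr_G(K)$ preserving $\Gr_G(O)$ and that the quotient groupoid presentation is the same, gives the identification functorially in $M$. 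I would phrase this as: both functors are left adjoint to the same composite restriction functor $\Rep(\G_O) \to \Rep(\H_O) \to \Rep(\H)$ — indeed $r_{\H_O}^\H \circ r_{\G_O}^{\hat\G}$ versus $r_{\H_O}^{\H}$ precomposed with nothing — wait, the cleanest route is to produce the natural transformation directly from the universal property of $i_{\H_O}^{\G_O}$ and check it is an isomorphism on the explicit pro-vector space models above.

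Second, granting \eqref{ind-K-O-isom}, I would verify the commutativity of \eqref{ind-K-O-diagram}. The diagram compares two ways of getting a map $r_{\H_O}^\H r_\H^{\hat\G} i_\H^{\hat\G} \to r_{\H_O}^\H$: along the top, restrict the counit $\can\colon r_\H^{\hat\G} i_\H^{\hat\G} \to \Id_{\Rep(\H)}$ of the $(\hat\G,\H)$-adjunction to $\H_O$; along the bottom, first transport via \eqref{ind-K-O-isom} to $r_{\H_O}^{\G_O} i_{\H_O}^{\G_O} r_{\H_O}^\H$ and then apply the counit $\can\colon r_{\H_O}^{\G_O} i_{\H_O}^{\G_O} \to \Id_{\Rep(\H_O)}$ of the $(\G_O,\H_O)$-adjunction. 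This is a standard compatibility of counits under a base-change isomorphism between two induction functors: it holds essentially because \eqref{ind-K-O-isom} is itself constructed from the two adjunctions, so it is a formal consequence of triangle identities. Concretely on the pro-vector space models, both composites are realized by evaluation: an element of $``\varprojlim"\SS(\ov{\Gr}_\la(K),\LL_c)\ot M$ is pushed to $M$ by evaluating the section at the base point $\ov{t}^0 \in \Gr_G$ (the image of $1 \in G(\!(t)\!)$), and restricting to $\H_O$ or passing through the $O$-model does not move this base point, so the two maps literally coincide. I would make this precise by unwinding the definition of \eqref{ind-K-O-isom} on the $\SS(\ov{\Gr}_\la,-)$ models and checking the two triangles of maps $\SS(\ov{\Gr}_\la(K),\LL_c)\ot M \to M$ agree.

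The main obstacle I expect is purely bookkeeping: making the identifications $i_\H^{\hat\G}(M) = ``\varprojlim"\SS(\ov{\Gr}_\la,\LL_c)\ot M$ sufficiently functorial and $\G_O$-equivariant that \eqref{ind-K-O-isom} is a genuine isomorphism of functors and not just of underlying pro-vector spaces, and then tracking the central extension (the level-$c$ twist by $\LL_c$) consistently when one restricts to $\G_O$, where $\hat\G$ is split. Since $\hat{G}$ is split over $G[\![t]\!]$ and one checks it splits over $G(O(\!(t)\!))$ too — or rather, the pulled-back line bundle $\LL_c$ restricted to $\Gr_G(O)$ is canonically trivialized, compatibly with the $\G_O$-action, by Lemma \ref{eq-triv-O-lem} — this twist is harmless, but saying so carefully is where the real work lies. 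Everything else is a diagram chase governed by the uniqueness of adjoints. I would therefore organize the proof as: (i) reduce to the explicit pro-vector space models; (ii) construct \eqref{ind-K-O-isom} on these models using $\ov{\Gr}_\la(O) = \ov{\Gr}_\la(K)$ for the underlying sets plus Lemma \ref{eq-triv-O-lem} for the line bundle; (iii) check $\G_O$-equivariance and functoriality in $M$; (iv) evaluate both paths in \eqref{ind-K-O-diagram} at the base point of $\Gr_G$ and observe they agree.
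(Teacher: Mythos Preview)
Your proposal is correct and follows essentially the same approach as the paper. The paper's proof is in fact a two-sentence version of what you wrote: the isomorphism \eqref{ind-K-O-isom} is induced by the identifications $\ov{\Gr}_\la(O)\simeq\ov{\Gr}_\la(K)$ (properness) together with the trivialization of $\LL_c$ over $\Gr(O)$, and the commutativity of \eqref{ind-K-O-diagram} holds because both adjunction counits $r_{\H}^{\hat{\G}}i_{\H}^{\hat{\G}}(V)\to V$ and $r_{\H_O}^{\G_O}i_{\H_O}^{\G_O}(W)\to W$ are given by evaluation at $1\in\Gr_G$. Your organization into steps (i)--(iv) and your flag about the $\G_O$-equivariance of the trivialization (via Lemma \ref{eq-triv-O-lem}) are more explicit than the paper, but the content is the same.
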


\begin{proof}
The isomorphism \eqref{ind-K-O-isom} is induced by the identifications
$$\ov{\Gr}_\la(O)\rTo{\sim}\ov{\Gr}_\la(K)$$
and by the trivialization of $\LL_c$ over $\Gr(O)$.
The commutativity of the diagram follows from the fact that the adjunction maps $r_{\H}^{\hat{\G}}i_{\H}^{\hat{\G}}(V)\to V$ 
and $r_{\H_O}^{\G_O}i_{\H_O}^{\G_O}(W)\to W$ are both given by the evaluation at $1$.
\end{proof}

Now we define the homomorphism $\nu_{K,O}$ as the composition
\begin{align*}
&\HH(\hat{\G},\H)_c\simeq \End(i^{\hat{\G}}_{\H}\triv_{\H})^{op}\to \End(r^{\hat{\G}}_{\G_O}i^{\hat{\G}}_{\H}\triv_{\H})^{op}\simeq 
\End(i_{\H_O}^{\G_O}r_{\H_O}^{\H}\triv_{\H})^{op}\\
&\simeq \End(i_{\H_O}^{\G_O}\triv_{\H_O})^{op}\simeq \HH(\G_O,\H_O),
\end{align*}
where we use the isomorphism \eqref{ind-K-O-isom}, as well as the isomorphism \eqref{end-hecke-map} and a similar isomorphism for 
$\HH(\G_O,\H_O)$.

Note that the isomorphism of $\G_O$-representations $r^{\hat{\G}}_{\G_O}i_{\H}^{\hat{\G}}(\C)\rTo{\sim} i_{\H_O}^{\G_O}(\C)$
comes from the identifications $\SS(\ov{\Gr}_\la(K),|\LL_c|)\simeq\SS(\ov{\Gr}_\la(O))$, and $\nu_{K,O}$ is simply the induced map
$$\varinjlim \Hom_{\H}(\SS(\ov{\Gr}_\la(K),|\LL_c|),\C)\to \varinjlim \Hom_{\H_O}(\SS(\ov{\Gr}_\la(O)),\C).$$

The homomorphism $\nu_{K,O}$ is compatible with the action on the functors of coinvariants as follows.
Note that we have a natural morphism of functors from $\Rep(\H)$ to $\VVect$,
\begin{equation}\label{coinv-fun-map}
\Coinv_{\H_O}r_{\H_O}^{\H}\to \Coinv_{\H}
\end{equation}
obtained using adjunction from the natural isomorphism $r_{\H_O}^{\H}\triv_{\H}\rTo{\sim} \triv_{\H_O}$.
Namely, the map
$$\Hom(\Coinv_{\H}V,W)\to \Hom(\Coinv_{\H_O}r_{\H_O}^{\H},W)$$
corresponding to \eqref{coinv-fun-map} is given as the composition
\begin{align*}
&\Hom(\Coinv_{\H}V,W)\simeq\Hom(V,\triv_{\H}W)\rTo{r_{\H_O}^{\H}}\Hom(r_{\H_O}^{\H}V,r_{\H_O}^{\H}\triv_{\H}W)\simeq\\
&\Hom(r_{\H_O}^{\H}V,\triv_{\H_O}W)\simeq\Hom(\Coinv_{\H_O}r_{\H_O}^{\H}V,W).
\end{align*}

\begin{lemma}\label{nu-KO-comp-lem}
Suppose we have $V\in \Rep(\G)$, $V_O\in \Rep(\G_O)$, and a morphism $f:V_O\to V$ compatible with the action of $\G_O$.
Then for any $h\in \HH(\hat{\G},\H)_c$, the action of $h$ on $\Coinv_{\H}(V)$ is compatible with the action of $\nu_{K,O}(h)$ on $\Coinv_{\H_O}(V_O)$, i.e.,
the following diagram is commutative
\begin{diagram}
\Coinv_{\H_O}(V_O)&\rTo{\nu_{K,O}(h)_{V_O}}&\Coinv_{\H_O}(V_O)\\
\dTo{} &&\dTo{}\\
\Coinv_{\H}(V)&\rTo{h_V}&\Coinv_{\H}(V)
\end{diagram}
where the vertical maps are induced by $f$ and by the morphism \eqref{coinv-fun-map}.
\end{lemma}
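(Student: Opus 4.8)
The plan is to unwind both actions of the respective Hecke algebras through the adjunctions they come from, and to reduce the statement to the compatibility of the unit maps recorded in diagram \eqref{ind-K-O-diagram}. Recall that by Lemma \ref{coinv-action-lem} (applied over $K$) and its evident analog over $O$, the endomorphism $h_V$ on $\Coinv_{\H}(V)=V_{\H}$ is characterized by the commutative square whose top row is the projection $V\to V_{\H}$, whose left column is the unit $u_V:V\to i^{\hat{\G}}_{\H}(V_{\H})$, and whose bottom row is $h_0\ot\id$; similarly $\nu_{K,O}(h)_{V_O}$ is characterized using the unit $u_{V_O}:V_O\to i^{\G_O}_{\H_O}((V_O)_{\H_O})$ and the functional $(h')_0$, where $h'=\nu_{K,O}(h)$ corresponds under the isomorphism \eqref{end-hecke-map} for $\HH(\G_O,\H_O)$.

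First I would pin down the relation between $h_0$ and $h'_0$: by the explicit description of $\nu_{K,O}$ given right after the lemma defining it, namely that $\nu_{K,O}$ is the map $\varinjlim\Hom_{\H}(\SS(\ov{\Gr}_\la(K),|\LL_c|),\C)\to\varinjlim\Hom_{\H_O}(\SS(\ov{\Gr}_\la(O)),\C)$ induced by the identifications $\SS(\ov{\Gr}_\la(K),|\LL_c|)\simeq\SS(\ov{\Gr}_\la(O))$, the functional $h'_0$ is literally $h_0$ transported across these identifications. Next I would verify the compatibility of the two units with $f$: the square with $u_{V_O}$ on the left, $u_V$ on the right, $f:V_O\to V$ on top, and on the bottom the composite $i^{\G_O}_{\H_O}((V_O)_{\H_O})\to i^{\G_O}_{\H_O}(V_{\H})\simeq r^{\hat{\G}}_{\G_O}i^{\hat{\G}}_{\H}(V_{\H})$ — using \eqref{ind-K-O-isom} and the map $(V_O)_{\H_O}\to V_{\H}$ induced by $f$ via \eqref{coinv-fun-map} — commutes, because both units are ``evaluation at $1$'' maps and $f$ is $\G_O$-equivariant; this is exactly the content of the commutativity of \eqref{ind-K-O-diagram} combined with naturality of the units in the representation variable.

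Then I would assemble the large diagram: stack the unit-compatibility square on top of the square expressing that $h_{V_{\H}}$ on $i^{\hat{\G}}_{\H}(V_{\H})$ restricts, after applying $\Coinv_{\H}$ and the counit, to $h_0\ot\id$ on $V_{\H}$, and likewise over $O$; the outer rectangle then reads precisely as the desired commutative square
\begin{diagram}
\Coinv_{\H_O}(V_O)&\rTo{\nu_{K,O}(h)_{V_O}}&\Coinv_{\H_O}(V_O)\\
\dTo{} &&\dTo{}\\
\Coinv_{\H}(V)&\rTo{h_V}&\Coinv_{\H}(V).
\end{diagram}
The key point making the two horizontal composites match is that the endomorphism $h_{V_{\H}}$ of $i^{\hat{\G}}_{\H}(V_{\H})$ is given, in the presentation as $``\varprojlim"\,\SS_\la\ot V_{\H}$, by $h_\la\ot\id$, and its image under $r^{\hat{\G}}_{\G_O}$ is, under \eqref{ind-K-O-isom}, exactly the endomorphism $i^{\G_O}_{\H_O}r^{\H}_{\H_O}(V_{\H})$ which (again by \eqref{end-hecke-map} over $O$) represents $\nu_{K,O}(h)$; this is where the definition of $\nu_{K,O}$ as $\End(-)\to\End(r^{\hat{\G}}_{\G_O}(-))$ followed by \eqref{ind-K-O-isom} is used directly.

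The main obstacle I anticipate is purely bookkeeping: carefully tracking the various adjunction units and counits — for $(\Coinv_{\H}r^{\hat{\G}}_{\H},\,i^{\hat{\G}}_{\H}\triv_{\H})$, for its analog over $O$, and for the comparison morphism \eqref{coinv-fun-map} — and checking that the ``evaluation at $1$'' descriptions of all the relevant adjunction maps are mutually compatible, so that the pasted diagram genuinely commutes rather than merely commuting up to the identifications. No new idea beyond \eqref{ind-K-O-diagram}, Lemma \ref{coinv-action-lem}, and the explicit form of $\nu_{K,O}$ should be needed; the work is in organizing these into one large commuting diagram and reading off the boundary.
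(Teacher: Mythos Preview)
Your approach is correct and essentially the same as the paper's: both arguments reduce the compatibility to the diagram \eqref{ind-K-O-diagram} together with the adjunction description of the Hecke action on coinvariants, and both exploit that $\nu_{K,O}$ is defined precisely so that $h$ and $\nu_{K,O}(h)$ match under the isomorphism \eqref{ind-K-O-isom}. The paper organizes the bookkeeping by working directly with the $\Hom$-sets (its Step~1 establishes exactly the unit-compatibility square you describe, and Step~2 runs the endomorphism around the square \eqref{Hom-Coinv-diagram}), whereas you invoke the explicit characterization from Lemma~\ref{coinv-action-lem}; this is a cosmetic difference.

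One small point to tighten: Lemma~\ref{coinv-action-lem} is stated for \emph{admissible} $V$, while Lemma~\ref{nu-KO-comp-lem} carries no such hypothesis. You should note that the diagram characterization in the first half of Lemma~\ref{coinv-action-lem} (the square with $u_V$ and $h_0\ot\id$) is proved there by pure adjunction and does not use admissibility; only the ``more explicitly'' formula via the maps $\alpha_\la$ requires it. With that remark your argument goes through verbatim.
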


\begin{proof}
{\bf Step 1}. First, we claim that for any map $f:\Coinv_{\H}(V)\to W$, where $V\in \Rep_c(\hat{\G})$, the morphism 
$f':V\to i_{\H}^{\hat{\G}}\triv_{\H} W$ in $\Rep_c(\hat{\G})$ corresponding to $f$ by adjunction, viewed as a morphism in $\Rep(\G_O)$,
coincides with the morphism obtained by adjunction from the composed morphism
$$\Coinv_{\H_O}(V)\to \Coinv_{\H}(V)\to W,$$
where the first arrow is given by the canonical morphism \eqref{coinv-fun-map}.
Indeed, this is equivalent to checking the commutativity of the diagram of functors
\begin{equation}
\begin{diagram}
\Coinv_{\H_O}r_{\H_O}^{\G_O}r_{\G_O}^{\G}i_{\H}^{\hat{\G}}\triv_{\H}&\rTo{\ga}&
\Coinv_{\H_O}r_{\H_O}^{\G_O}i_{\H_O}^{\G_O}\triv_{\H_O}\\
\dTo{\b}&&\dTo{\can}\\
\Coinv_{\H}r_{\H}^{\hat{\G}}i_{\H}^{\hat{\G}}\triv_{\H}&\rTo{\can}&\Id_{\VVect}
\end{diagram}
\end{equation}
where $\ga$ is induced by the isomorphism \eqref{ind-K-O-isom}, $\b$ is induced by \eqref{coinv-fun-map},
and the remaining two arrows are counits of adjunction.
We claim that this follows from the commutativity of the diagram \eqref{ind-K-O-diagram}.
Indeed, $\can\circ\ga$ is equal to the composition 
$$\Coinv_{\H_O}r_{\H_O}^{\G_O}r_{\G_O}^{\G}i_{\H}^{\hat{\G}}\triv_{\H}\rTo{\ga}\Coinv_{\H_O}r_{\H_O}^{\G_O}i_{\H_O}^{\G_O}\triv_{\H_O}\to
\Coinv_{\H_O}\triv_{\H_O}\to \Id_{\VVect}.$$
The composition of the first two arrows is induced (by post-composing with $\Coinv_{\H_O}$ and pre-composing with $\triv_{\H}$) by the map
$$r_{\H_O}^{\G_O}r_{\G_O}^{\G}i_{\H}^{\hat{\G}}\to r_{\H_O}^{\G_O}i_{\H_O}^{\G_O}r_{\H_O}^{\H}\to r_{\H_O}^{\H},$$
which by commutativity of \eqref{ind-K-O-diagram}, is equal to the map
$$r_{\H_O}^{\G_O}r_{\G_O}^{\G}i_{\H}^{\hat{\G}}\simeq r_{\H_O}^{\H}r_{\H}^{\hat{\G}}i_{\H}^{\hat{\G}}\to r_{\H_O}^{\H}$$
induced by the adjunction. Hence, $\can\circ\ga$ is equal to the composition
$$\Coinv_{\H_O}r_{\H_O}^{\G_O}r_{\G_O}^{\G}i_{\H}^{\hat{\G}}\triv_{\H}\simeq \Coinv_{\H_O}r_{\H_O}^{\H}r_{\H}^{\hat{\G}}i_{\H}^{\hat{\G}}\triv_{\H}\to
\Coinv_{\H_O}r_{\H_O}^{\H}\triv_{\H}\simeq \Coinv_{\H_O}\triv_{\H_O}\to\id_{\VVect},$$
induced by adjunctions. Now the fact that this is equal to $\can\circ \b$ follows from the commutative diagram
\begin{diagram}
\Coinv_{\H_O}r_{\H_O}^{\H}r_{\H}^{\hat{\G}}i_{\H}^{\hat{\G}}\triv_{\H}&\rTo{}&\Coinv_{\H_O}r_{\H_O}^{\H}\triv_{\H}&\rTo{}&
\Coinv_{\H_O}\triv_{\H_O}\\
\dTo{}&&\dTo{}&&\dTo{}\\
\Coinv_{\H}r_{\H}^{\hat{\G}}i_{\H}^{\hat{\G}}\triv_{\H}&\rTo{}&\Coinv_{\H}\triv_{\H}
&\rTo{}&\Id_{\VVect}
\end{diagram}
in which commutativity of the right square follows from the definition of the morphism \eqref{ind-K-O-isom}.

\noindent
{\bf Step 2}. Step 1 implies that for any $V\in\Rep_c(\hat{\G})$ and $W\in \VVect$, the following square (in which the horizontal arrows
are isomorphisms) is commutative
\begin{equation}\label{Hom-Coinv-diagram}
\begin{diagram}
\Hom(\Coinv_{\H}(V),W)&\rTo{\a}& \Hom_{\G}(V,i_{\H}^{\hat{\G}}\triv_{\H}W)\\
\dTo{\b}&&\dTo{r^{\hat{\G}}_{\G_O}}\\
\Hom(\Coinv_{\H_O}(V),W)&\rTo{\ga\circ\a_O}& \Hom_{\G_O}(r^{\hat{\G}}_{\G_O}V,r^{\hat{\G}}_{\G_O}i_{\H}^{\hat{\G}}\triv_{\H}W)
\end{diagram}
\end{equation}
where $\a$ is given by the adjunction, $\b$ is induced by \eqref{coinv-fun-map}, and the bottom horizontal arrow is the composition of the map
$$\a_O:\Hom(\Coinv_{\H_O}(V),W)\rTo{\sim}\Hom_{\G_O}(V,i_{\H_O}^{\G_O}\triv_{\H_O}W)$$
given the adjunction with the map induced by the isomorphism $\ga:i_{\H_O}^{\G_O}\triv_{\H_O}\rTo{\sim} r_{\G_O}^{\G}i_{\H}^{\hat{\G}}\triv_{\H}$ (see \eqref{ind-K-O-isom}).

For $h\in \HH_{\G,\H}=\End(i_{\H}^{\hat{\G}}\triv_{\H})^{op}$, let $h'$ be the corresponding element of $\End(\Coinv_{\H}r_{\H}^{\hat{\G}})$.
By definition, for any $f\in \Hom(\Coinv_{\H}(V),W)$, we have
\begin{equation}\label{h-a-f-id}
h_W\circ \a(f)=\a(f\circ h'_V).
\end{equation}
Similarly, for $h_O\in \HH_{\G_O,\H_O}=\End(i_{\H_O}^{\G_O}\triv_{\H_O})^{op}$ let $h'_O\in \End(\Coinv_{\H_O}r_{\H_O}^{\G_O})$ be the corresponding element. 
Then we have
\begin{equation}\label{h-aO-fO-id}
h_{O,W}\circ \a_O(f_O)=\a_O(f_O\circ h'_{O,V_O}),
\end{equation}
where $f_O\in \Hom(\Coinv_{\H_O}(V_O),W)$, with $V_O\in \Rep(\G_O)$.

Now let $h_O=\nu_{K,O}(h)$. Note that we have an endomorphism of every vertex of the square \eqref{Hom-Coinv-diagram}:
for the two right vertices they are induced by $h_W$ and $r^{\hat{\G}}_{\G_O}(h_W)$, while for the two left vertices they are induced by $h'$ and $h'_O$.
We want to check that the map $\b$ is compatible with these endomorphisms, i.e., 
$$\b(f\circ h'_V)=\b(f)\circ h'_{O,V_O}.$$
It is enough to check instead that the three other edges in the square are compatible with the endomorphisms.
For the arrow $r^{\hat{\G}}_{\G_O}$, this is clear.
For $\a$ and $\a_O$ the compatibility is given by \eqref{h-a-f-id} and \eqref{h-aO-fO-id}.
Finally, for $\ga$ this corresponds to the identity
$$\ga_W h_{O,W}= r_{\G_O}^{\G}(h_W) \ga_W$$
which follows from the definition of $\nu_{K,O}$.
\end{proof}

Next, we will construct a homomorphism
$$\nu_{O,O/\fm^N}:\HH(\G_O,\H_O)\to \HH(\G_N,\H_N).$$
As a map of vector spaces it is induced by the natural maps
$$\Hom_{\H_O}(\SS(\ov{\Gr}_\la(O)),\C)\to \Hom_{\H_N}(\SS(\ov{\Gr}_\la(O/\fm^N)),\C)$$
dual to the pull-back maps $\SS(\ov{\Gr}_\la(O/\fm^N))\to \SS(\ov{\Gr}_\la(O))$.
We claim that it is a homomorphism of algebras.

For this, we use the interpretation of both algebras as endomorphisms of the functor of coinvariants.
Namely, an element $h\in\HH(\G_O,\H_O)$ induces an endomorphism $h_V\in \End(V_{\H_O})$ for any $V\in \G_O$.
Now given a representation $V_N$ of $\G_N$, we can view it as a representation of $\G_O$ via the surjective homomorphism $\G_O\to \G_N$.
Then we observe that
$(V_N)_{\H_O}=(V_N)_{\H_N}$ since $\H_O$ surjects onto $\H_N$. Hence, we can view $h_{V_N}$ as an endomorphism of $(V_N)_{\H_N}$.
Clearly, this construction is compatible with the algebra structure. The fact that it coincides with the map $\nu_{O,O/\fm^N}$
follows easily from the analogs of Lemma \ref{coinv-action-lem} for $\G_O$- and $\G_N$-representations.



\subsection{Coordinate-free Hecke algebras and the small Hecke algebra over $O/\fm^N$}\label{coord-free-sec}

All the notions related to the groups $G(\!(t)\!)$, $G[\![t]\!]$ over a local field $K$, including the Hecke algebras $\HH(\hat{\G},\H)_c$ and
their actions on the space of $\H$-coinvariants, can be formulated starting with a local complete $K$-algebra $A_K$, isomorphic to $K[\![t]\!]$ (then the field of quotients of $A_K$
will be isomorphic to $K(\!(t)\!)$).
We denote the corresponding Hecke algebra as $\HH_{\hat{G},A_K,c}$ (which is isomorphic to $\HH(\hat{\G},\H)_c$).
For example, we can take $A_K=\hat{\OO}_{C,v}$, the completion of a local ring of a $K$-point on smooth curve over $K$.

The situation is slightly different with the $O$-integral version. Here, we need to start with an $O$-algebra $A_O$, together with a principal ideal $I\sub A_O$ such that
$A_O/I\simeq O$ and $A_O$ is $I$-adically complete, such that there exists an isomorphism $A_O\simeq O[\![t]\!]$ sending $I$ to $t O[\![t]\!]$.
Then since $I$ is free of rank $1$ as an $A_O$-module, we can define powers $I^{-n}$ for $n>0$, and consider the ring $\varinjlim I^{-n}$ as a replacement of $O(\!(t)\!)$
(to which it is isomorphic). This allows to define the corresponding Hecke algebra $\HH_{G,A_O,I}$ which is isomorphic to $\HH(\G_O,\H_O)$.
For example, we can take $A_O=\hat{\OO}_{C_O,v_O}$, the completion of a smooth curve $C_O$ over $O$ along an $O$-point $v_O:\Spec(O)\hra C_O$,
where the ideal $I\sub A_O$ is the ideal of $v_O(\Spec(O))$.

Finally, for the $O/\fm^N$-version, we start with a local complete $O/\fm^N$-algebra $A_{O/\fm^N}$, isomorphic to $O/\fm^N[\![t]\!]$. Note that for such an algebra
we can consider the complete ring of quotients $Q A_{O/\fm^N}$ (by inverting every non-zero divisor), which is easily seen to be isomorphic to $O/\fm^N(\!(t)\!)$.
Then we can define the corresponding Hecke algebra $\HH_{G,A_{O/\fm^N}}$.
For example, if $C_N$ is a smooth curve over $O/\fm^N$ and $\ov{v}\in \ov{C}(k)$ is a $k$-point of the reduction $\ov{C}=(C_N)_k$, then we can take
$A_{O/\fm^N}=\hat{\OO}_{C_N,\ov{v}}$. Note that if $v_N\in C_N(O/\fm^N)$ is a lifting of $\ov{v}$ then the completion of $C_{O/\fm^N}$ along $v_N$ gives the same algebra.

Now all the constructions of the previous sections can be adjusted so that they would work with the above definitions.
For example, starting with the data $(A_O,I)$ as above, we can define $A_K$ as the completion of $K\ot A_O$ and set $A_{O/\fm^N}=A_O\ot_O O/\fm^N$.
Then we will have natural homomorphisms, compatible with the action on spaces of coinvariants,
\begin{equation}\label{coord-free-hecke-hom-eq}
\nu_{K,O/\fm^N;I}:\HH_{G,A_K,crit}\rTo{\nu_{K,O}} \HH_{G,A_O,I}\rTo{\nu_{O,O/\fm^N}} \HH_{G,A_{O/\fm^N}},
\end{equation}
where $\HH_{G,A_K,crit}:=\HH_{\hat{G}_{crit},A_K,|\cdot|}$.
Note that this homomorphism depends on a choice of an ideal $I\sub A_O$. Namely, we can assume that $A_O=O[\![t]\!]$ and consider ideals of the form
$I=(t+x)$ where $x\in \fm$. The corresponding Hecke operators over $O/\fm^N[\![t]\!]$ obtained from homomorphisms \eqref{coord-free-hecke-hom-eq} depend on
$x\in \fm$.

\begin{definition}
Let $A_{O/\fm^N}$ be an $O/\fm^N$-algebra as above. We define the {\it small Hecke algebra} $\HH^{sm}_{G,A_{O/\fm^N}}$ as the
subalgebra of $\HH_{G,A_{O/\fm^N}}$ generated by the elements $\nu_{K,O/\fm^N;I}(h^\bl)$, where $\bl\in \La^+$, and 
we consider all pairs $(A_O,I)$ as above equipped with an isomorphism $A_O\ot_O O/\fm^N\simeq A_{O/\fm^N}$,
and use 
the corresponding homomorphisms
\eqref{coord-free-hecke-hom-eq}.
\end{definition}

Note that by Theorem \ref{BK-thm}, the elements $\nu_{K,O/\fm^N;I}(h^\bl)$ commute for varying $\bl\in\La^+$ and a fixed ideal $I\sub A_O=O[\![t]\!]$ 
(and a fixed identification $A_O\ot_O O/\fm^N\simeq A_{O/\fm^N}$). Our main result, Theorem \ref{small-hecke-thm} below, is that they also commute
for different choices of $I$ (under some assumptions on $G$).

For a connected split reductive group $G$ over a field $k$ we denote by $Z_G\sub G$ the center of $G$, and by $Z^0_G\sub Z_G$ its neutral component 
(and as usual, $\fg$ denotes the Lie algebra of $G$). 
We will consider the following condition on $(G,k)$, which is satisfied if the characteristic of $k$ avoids some primes depending on $G$:

\medskip

\noindent
$\bf{Char_G}$: the center $\fz$ of $\fg$ coincides with the Lie algebra of $Z^0_G$, and the order of the finite group scheme $Z_G/Z^0_G$ is invertible in $k$.

\medskip

For example, this condition always holds for $G=\GL_n$, and it holds for $G=\SL_n$ if and only if $n$ is invertible in $k$.

\begin{theorem}\label{small-hecke-thm}
Assume that 
$G$ is a split reductive group over $\Z$, such that 
its commutator subgroup $[G,G]$ is simply connected, and the pair $(G,k=O/\fm)$ satisfies condition $\bf{Char_G}$.
Then the small Hecke algebra $\HH^{sm}_{G,O/\fm^N[\![t]\!]}$ is commutative.
\end{theorem}

We will give a proof of this theorem in Section \ref{comm-hecke-sec} using the action of the Hecke algebras on the spaces associated with $\Bun_G$.
A purely local proof of commutation of some of the elements in $\HH^{sm}_{G,O/\fm^N[\![t]\!]}$ for $G=\GL_2$ was given in \cite{BKP}.

\section{Some results on $G$-bundles}\label{G-bun-sec}

In this section we prove some auxiliary results about $G$-bundles on curves. 
In Sec.\ \ref{nice-G-sec}
we introduce the a notion of a {\it nice $G$-bundle} of level $n$ with respect to a point on a smooth projective curve over $k$ (nice $G$-bundles exhibit a sufficiently generic cohomological behavior). We refer to the appendix for the proof of existence of curves with such $G$-bundles.
We also discuss $G$-bundles over $O/\fm^N$ whose reduction over $k=O/\fm$ is nice. 
Then in Sec.\ \ref{gen-triv-O-sec} we consider curves over $O$ and $K$ and
discuss generic triviality of $G$-bundles on them.

\subsection{Nice $G$-bundles}\label{nice-G-sec}


\subsubsection{Definition and existence}

Let $G$ be a connected split reductive group over a field $k$,
satisfying condition $\bf{Char_G}$.





Let $C$ be a smooth projective curve over $k$ such that $H^0(C,\OO)=k$.
Note that for every $G$-bundle $P$ on $C$ we have the induced vector bundle $\fg_P$, 
so any automorphism $\phi$ of $P$ over an open subset $U\sub C$ induces an automorphism
$\phi_\fg$ of the vector bundle $\fg_P|_U$. 

\begin{definition}\label{G-bun-order-pole-def}
Let $P$ and $P'$ be $G$-bundles on $C$ and let $\phi:P|_{C-p}\to P'_{C-p}$ be an isomorphism of $G$-bundles over $C-p$. 
We say that $\phi$ has a pole of order $\le n$ at $p$, if this holds for
the induced homomorphism of the adjoint vector bundles $\phi_{\fg}:\fg_P|_{C-p}\to \fg_{P'}|_{C-p}$ over $C-p$.
\end{definition}

Note that since $G$ acts trivially on $\fz\sub \fg$, we always have an embedding $\fz\sub H^0(C,\fg_P)$.

\begin{definition}\label{nice-G-bun-def}
Let $p\in C(k)$ be a point. We say that a $G$-bundle $P$ on $C$ is {\it nice of level $n$} at $p$ if
\begin{itemize}
\item $H^0(C,\fg_P(np))=\fz(k)$;
\item every automorphism of $P|_{C-p}$ with a pole of order $\le n$ at $p$, is given by an element of the center $Z_G(k)$.
\end{itemize}
\end{definition}


The following result will be proved in the appendix (see Proposition \ref{nice-G-bun-prop-bis}).

\begin{prop}\label{nice-G-bun-prop}
Let $G$ be a connected split reductive group over a field $k$ satisfying condition $\bf{Char_G}$. Then
for any $n\ge 1$, there exists a smooth geometrically irreducible projective curve $C$ of genus $g\ge n+2$ over $k$, a point $p\in C(k)$,
and a $G$-bundle $P$ such that $G$ is nice of level $n$ at $p$.
\end{prop}

\subsubsection{$G$-bundles with nice reduction}\label{nice-red-sec}

Now let $O$ be the ring of integers in a local field, and let $k=O/\fm$ be the residue field.
Assume that $G$ is a connected split reductive group over $\Z$, such that 
condition $\bf{Char_G}$ is satisfied for the pair $(G,k)$.

Let $C$ be a smooth proper curve over $O/\fm^N$, $P$ a $G$-bundle over $C$.
We use an obvious analog of Definition \ref{G-bun-order-pole-def} to define the order of a pole of an automorphism of $P$ along an $O/\fm^N$-point of $C$.
For $i<N$, we set $C_i:=C\times\Spec(O/\fm^{i+1}).$

\begin{lemma}\label{G-nice-fin-ring-lem} 
Let $C$ be a smooth proper curve over $O/\fm^N$, $p\in C(O/\fm^N)$, and let $C_0$ be the corresponding curve over $k=O/\fm$, $p_0=p\mod \fm$.
Assume that $P$ is a $G$-bundle on $C$ such that $P|_{C_0}$ is nice of level $n$ at $p_0$. Then any automorphism of $P|_{C-p}$ with a pole of order $\le n$ at $p$ comes
from an element of $Z_G(O/\fm^N)$.
\end{lemma}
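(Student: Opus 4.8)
The idea is to reduce the statement over $O/\fm^N$ to the statement over $k=O/\fm$ by induction on $N$, using the infinitesimal structure of the situation. Let $\phi$ be an automorphism of $P|_{C-p}$ with a pole of order $\le n$ at $p$. First I would reduce modulo $\fm$: by hypothesis $P|_{C_0}$ is nice of level $n$ at $p_0$, so the induced automorphism $\phi_0$ of $P|_{C_0-p_0}$ (which again has a pole of order $\le n$ at $p_0$) is given by an element $z_0\in Z_G(k)$. By Lemma \ref{center-functions-lem} applied over $k$, $z_0$ is a constant element of $Z_G(k)$, and since $Z_G$ is smooth over $\Z$ with the order of $Z_G/Z^0_G$ prime to $p$, we can lift $z_0$ to an element $z\in Z_G(O/\fm^N)$. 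Replacing $\phi$ by $z^{-1}\phi$, we are reduced to the case where $\phi$ reduces to the identity modulo $\fm$.

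Now I would run the induction. Suppose we know the result for $O/\fm^{N-1}$, and let $\phi$ be an automorphism of $P|_{C-p}$ over $O/\fm^N$ with pole of order $\le n$ at $p$ which reduces to the identity modulo $\fm$. By the inductive hypothesis, the reduction $\phi_{N-1}$ of $\phi$ modulo $\fm^{N-1}$ is given by an element $z_{N-1}\in Z_G(O/\fm^{N-1})$, which (again by Lemma \ref{center-functions-lem} over $O/\fm^{N-1}$, or simply because it already reduces to the identity mod $\fm$ and $Z_G$ is smooth) we may take to lift to $z'\in Z_G(O/\fm^N)$; replacing $\phi$ by $(z')^{-1}\phi$ we may assume $\phi$ reduces to the identity modulo $\fm^{N-1}$. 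Then $\phi = \id + \fm^{N-1}\psi$, where $\psi$ is a section over $C_0-p_0$ of the adjoint bundle $\fg_{P_0} = \fg_{P}|_{C_0}$ (here I use that $\fm^{N-1}/\fm^N$ is a $1$-dimensional $k$-vector space, so that the ``error term'' of an automorphism congruent to the identity mod $\fm^{N-1}$ is naturally a section of $\fg_{P_0}$, via the standard identification of first-order deformations of the trivial automorphism). The condition that $\phi$ has a pole of order $\le n$ at $p$ translates, by Definition \ref{G-bun-order-pole-def} and its analog over $O/\fm^N$, into the statement that $\psi$ is a global section of $\fg_{P_0}(np_0)$ over $C_0$.

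At this point the niceness hypothesis on $P|_{C_0}$ enters through its first bullet: $H^0(C_0,\fg_{P_0}(np_0)) = \fz(k)$. Hence $\psi\in\fz(k)$, i.e. $\psi$ comes from the center of the Lie algebra; correspondingly $\phi = \id + \fm^{N-1}\psi$ is the reduction modulo $\fm^N$ of the central element $\exp(\fm^{N-1}\tilde\psi)\in Z^0_G(O/\fm^N)$ for a lift $\tilde\psi\in\fz(O)$ of $\psi$ (the exponential makes sense here because $\fm^{N-1}$ has square zero in $O/\fm^N$; if one prefers to avoid the exponential, one uses directly that $Z^0_G$ is a split torus, so its $O/\fm^N$-points surjecting onto $O/\fm^{N-1}$-points have kernel $\fz\otimes \fm^{N-1}/\fm^N$). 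Multiplying back the central elements accumulated along the way, we conclude $\phi\in Z_G(O/\fm^N)$, as desired.

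**Main obstacle.** The only genuinely delicate point is the bookkeeping in the second step: identifying the ``infinitesimal difference'' of two automorphisms congruent mod $\fm^{N-1}$ with a section of the adjoint bundle, and checking that the pole-order condition of Definition \ref{G-bun-order-pole-def} (phrased via $\fg_P$, or via $W'_P$ in type $D_\ell$ with $\ell$ even) is exactly matched by the condition $\psi\in H^0(C_0,\fg_{P_0}(np_0))$. In the $D_\ell$-even case one must note that the faithfulness of $d\rho:\fg\to\gl(W')$ means a pole of order $\le n$ for $\phi_{W'}$ forces $\psi$ to lie in $H^0(C_0,\fg_{P_0}(np_0))$ just as well, so the same argument goes through verbatim. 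Everything else — the liftability of central elements, the constancy of maps to $Z_G$, the square-zero exponential — is routine given Lemma \ref{center-functions-lem} and the assumptions on the characteristic.
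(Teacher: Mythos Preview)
Your argument is essentially the paper's own proof: induct on $N$, use smoothness of $Z_G$ (via the order of $Z_G/Z^0_G$ being prime to $p$) to lift central elements and thereby reduce to an automorphism congruent to the identity modulo $\fm^{N-1}$, then identify the resulting infinitesimal datum with a section of $\fg_{P_0}(np_0)$ and apply the first niceness condition $H^0(C_0,\fg_{P_0}(np_0))=\fz(k)$. Your write-up is in fact more detailed than the paper's (which compresses the whole induction into three sentences), but the strategy is identical.
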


\begin{proof}
We will prove by induction on $i$ that the assertion holds for the restriction $P|_{C_i}$. In the case $i=0$ this is a consequence of the assumption that $P|_{C_0}$ is nice.

Let $d$ denote the order of $Z_G/Z^0_G$. Then the group $Z_G$ is smooth over $\Z[d^{-1}]$. Since $d$ is invertible in $O/\fm$, we deduce that each map
$Z_G(O/\fm^{i+1})\to Z_G(O/\fm^i)$ is surjective.

Let $\phi$ be an automorphism of $P|_{C_i-p}$ with a pole of order $\le n$ at $p$, such that the induced automorphism of $P|_{C_{i-1}-p}$ comes from an element $\ga\in Z_G(O/\fm^i)$.
Lifting $\ga$ to an element of $Z_G(O/\fm^{i+1})$ and modifying $\phi$ we can assume that $\phi$ is identity modulo $\fm^i$. Hence, $\phi$ corresponds to a section
of $H^0(C_0,\fg_P(np)|_{C_0})\ot \fm^i/\fm^{i+1}$. It remains to use the equality $H^0(C_0,\fg_P(np)|_{P_0})=\fz(k)$ and the identification of
$\fz$ with the Lie algebra of $Z^0_G$, which gives an embedding 
$$\fz(k)\ot \fm^i/\fm^{i+1}\sub Z^0_G(O/\fm^{i+1})\sub Z_G(O/\fm^{i+1}).$$
\end{proof}

\subsection{Generic triviality and $O$-structures}\label{gen-triv-O-sec}

Let $C$ be a smooth complete irreducible curve over a local non-archimedian field $K$.
Let $G$ be a connected split reductive group over $\Z$. We denote by
$\und{\Bun}_G$ the stack of $G$-bundles on $C$, and set $\Bun_G:=\und{\Bun}_G(K)$. It is known that the stack $\und{\Bun}_G$ is admissible (see \cite[Sec.\ 7.1]{GK2}).



Let $B\sub G$ be a Borel subgroup containing a split maximal torus $T$. 


\begin{lemma}\label{gen-tr-O-lem}
Assume that $C$ has a smooth model $C_O$ over $O$, and let $\Bun^O_G\sub \Bun_G$ denote the subgroupoid of $G$-bundles on $C$ that extend to $G$-bundles
over $C_O$. Let also $\Bun^{gt}_G\sub \Bun_G$ denote the subgroupoid of $G$-bundles that are trivial at the general point of $C$. 

\noindent
(i) We have an inclusion $\Bun^{gt}_G\sub \Bun^O_G$. 

\noindent
(ii) For every $n\ge 0$, set $C_n=C_O\times_{\Spec(O)}\Spec(O/\fm^{n+1})$. Assume the commutator subgroup $[G,G]$ is simply connected.
Then every $G$-bundle over $C_O$ (resp., over $C_n$) admits a $B$-structure and is trivial at the general point. 
In particular, we have $\Bun^{gt}_G=\Bun^O_G$.
\end{lemma}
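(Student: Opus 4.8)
The statement has two parts. Part (i) is a soft argument: if a $G$-bundle $P$ on $C$ is generically trivial, I want to extend it to $C_O$. First I would spread out the generic trivialization: a trivialization of $P$ over $\Spec K(C)$ extends to a trivialization over some nonempty open $U_K\subset C$, and by properness of $C_O$ over $O$ and the fact that $C_O$ is regular (smooth), the complement $C\setminus U_K$ consists of finitely many closed points, each of which extends to a finite set of irreducible divisors $D_1,\dots,D_r$ on $C_O$ that are finite flat over $\Spec O$. Over the open $U_O := C_O\setminus\bigcup D_i$ the bundle $P$ extends trivially. Then I would glue: near each $D_i$, $C_O$ is regular of dimension $2$, and a $G$-bundle on $C_O\setminus D_i$ extends over $D_i$ — here I would invoke the standard fact (Colliot-Thélène–Sansuc style, or for $G$ reductive the extension of torsors over regular schemes across divisors via $B$-reduction / affine Grassmannian Beauville–Laszlo gluing) that a $G$-bundle on the generic point of the local ring at the generic point of $D_i$ (a DVR) together with a $G$-bundle on $U_O$ glues to a $G$-bundle on all of $C_O$. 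Concretely, by Beauville–Laszlo, giving a $G$-bundle on $C_O$ amounts to giving one on $U_O$, one on each formal disc $\widehat{C_O}_{D_i}$, and an identification on the punctured disc; since $P$ is already defined over $K\otimes$ everything and extends across the generic point of $D_i$ (that local ring is a DVR with fraction field containing $K(C)$, over which $P$ is trivial, hence extends as the trivial bundle), the gluing data is there. This yields the extension, proving $\Bun^{gt}_G\subset\Bun^O_G$.

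**Part (ii), the heart.** Here I want to show every $G$-bundle $P$ on $C_O$ (and on each $C_n$) admits a $B$-structure, which in particular gives generic triviality (a $B$-bundle is generically trivial once its induced $T$-bundle is, and a $T$-bundle on a curve over a local/finite ring — after twisting by a generic point's worth of $\mathbb G_m$-torsors — is generically trivial because $\Pic$ of the generic point vanishes; more precisely a $B$-bundle is an iterated extension of line bundles via the unipotent radical, and each line bundle is generically trivial, so the whole $B$-bundle is). So the real content is: \emph{existence of a Borel reduction}. The strategy I would use: reduce to the residue field case and lift. Over $k=O/\fm$, by a theorem in the style of Drinfeld–Simpson (every $G$-bundle on a smooth curve over a field, with $G$ semisimple simply connected, or here $[G,G]$ simply connected, is generically trivial, hence after removing the bad points admits a $B$-structure which one then propagates using properness and the flag variety $G/B$ being proper), the bundle $P|_{C_0}$ admits a $B$-structure. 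This is where condition $(\star)$ / the principal $\SL_2$ enters: the principal homomorphism $i:\SL_2\to G$ and simple connectivity of $[G,G]$ are exactly what make the Drinfeld–Simpson type argument run in the generality needed (it is used to produce the generic trivialization over the function field via a filtration argument, and the principal $\mathfrak{sl}_2$ controls the instability). Then I would lift the $B$-structure from $C_0$ to $C_n$ inductively: a $B$-reduction of $P$ is a section of the proper smooth $C_O$-scheme $(P\times^G G/B)$, and the obstruction to lifting a section from $C_{n-1}$ to $C_n$ lies in $H^1(C_0, \mathfrak n_{\mathcal F}(\text{something}))$ where $\mathfrak n = \operatorname{Lie}(N)$, $N$ the unipotent radical of $B$, twisted by the chosen reduction and by $\fm^n/\fm^{n+1}$; the point is that obstruction/ambiguity groups are $H^i$ of a vector bundle on a curve, so $H^2=0$ and lifting is unobstructed (sections always lift, possibly after changing the section — use smoothness of $G/B$ and of the section scheme). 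This gives the $B$-structure on every $C_n$ and on $C_O = \varprojlim C_n$ (formal $B$-bundle algebraizes by properness of the flag scheme, or one works with $C_n$ for all $n$ which is what is actually needed).

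**Assembling and the obstacle.** Combining: $\Bun^{gt}_G\subset\Bun^O_G$ from (i); conversely a bundle in $\Bun^O_G$ extends to $C_O$, hence by (ii) it admits a $B$-structure and is generically trivial, so $\Bun^O_G\subset\Bun^{gt}_G$, giving the equality $\Bun^{gt}_G=\Bun^O_G$. The main obstacle I anticipate is \textbf{part (ii)}: getting the Borel reduction of an arbitrary $G$-bundle over the \emph{base curve over a field} $C_0$ in positive characteristic. One cannot just cite Drinfeld–Simpson naively in char $p$ for small $p$ — this is precisely why the hypothesis $p>\max(a_{\fg'},4(h_{\fg'}-1))$ and the principal $\SL_2$ of Section~\ref{prin-sl2-sec} are imposed, and the argument must use that a $G$-bundle, if not generically trivial, has a canonical reduction (Harder–Narasimhan / Atiyah–Bott, available since the relevant instability parabolics behave well in large characteristic) whose associated graded is "more split", and one runs a descending induction on instability using the principal $\SL_2$ to modify by Hecke transformations at a point; controlling that this terminates, in the integral/finite-ring setting, is the delicate part. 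The lifting step (ii) from $C_0$ to $C_n$ is by contrast routine deformation theory of sections of a smooth proper morphism, and part (i) is standard gluing, so I would budget essentially all the effort on establishing generic triviality / existence of $B$-structure over the special fiber and then propagating it.
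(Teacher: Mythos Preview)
Your overall plan for (ii) --- Borel reduction on $C_0$, then lift --- matches the paper, but there is a real gap in your lifting step. You say the obstruction to extending a section of $P/B$ from $C_{n-1}$ to $C_n$ lies in an $H^1$ of a vector bundle on $C_0$, and then write ``$H^2=0$ and lifting is unobstructed''. But the obstruction \emph{is} that $H^1$, namely $H^1(C_0,V)$ with $V$ the bundle associated to the $B$-representation $\fg/\fb$; on a curve this need not vanish, so lifting is not automatic. The paper's argument (following Drinfeld--Simpson) first arranges the $B$-structure $F_0$ on $P_0$ so that $\deg_\a(F_0)\le -N$ for every simple root $\a$, with $N$ large enough to force $H^1(C_0,V)=0$. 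Producing such an $F_0$ is where the principal $\SL_2$ actually enters: one reduces (via a finite map $C_0\to\P^1$ and generic triviality) to constructing negative-degree $B$-structures on the trivial $G$-bundle on $\P^1$, and the homomorphism $i:\SL_2\to G$ reduces this to the explicit $\SL_2$ case. So condition $(\star)$ is not used to get generic triviality over $k(C_0)$ --- that comes from Harder's vanishing $H^1(k(C_0),G')=1$ for $G'=[G,G]$ simply connected, plus the split torus case --- but rather to manufacture the very negative $B$-structure that kills the lifting obstruction. Your speculation about Harder--Narasimhan instability is not the mechanism here.

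For (i), your Beauville--Laszlo sketch would need more care. The paper takes a shorter route: glue $P$ with the trivial bundle on $C_O\setminus D_O$ to get a bundle on $C_O\setminus D_k$ (the missing locus now has codimension $2$), then embed $G\hookrightarrow\GL_N$, extend the associated vector bundle across $D_k$ using that reflexive sheaves on the regular surface $C_O$ are locally free, and finally extend the $G$-reduction across $D_k$ using that $\GL_N/G$ is affine (Hartogs).
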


\begin{proof}
(i) First, let us show that any $G$-bundle $P$ on $C$, trivial at the general point, can be extended to a $G$-bundle on $C_O$.
Let $t_U$ be a trivialization of $P$ on an open subset $U=C\setminus D$, where $D\sub C$ is an effective divisor.
We can extend $D$ to a divisor $D_O\sub C_O$. Let $D_k\sub C_k$ be the corresponding divisor on the special fiber. Then using our trivialization $t_U$
we can glue $P$ and the trivial bundle on $C_O\setminus D_O$ into a $G$-bundle $P'$ over $C_O\setminus D_k$.

Let us realize $G$ as a closed subgroup in some $\GL_N$, and let $V'$ be the $\GL_N$-bundle over $C_O\setminus D_k$ associated with $P'$.
Then it is well known that $V'$ extends to a $\GL_N$-bundle $V_O$ over $C_O$ (using the fact that reflexive coherent sheaves on $C_O$ are locally free).
Let $\pi:X_O\to C_O$ be the $\GL_N/G$-fibration classifying reductions of $V_\OO$ to a $G$-bundle. The $G$-bundle $P'$ corresponds to a section $\si$ of $\pi$ over
$C_O\setminus D_k$. Since $\GL_N/G$ is affine, $\si$ extends regularly to a section of $\pi$ on the entire $C_O$. This gives the required extension of $P$ to $C_O$.

(ii) 
This follows from the results of \cite{GPS}.
We will give a somewhat different proof following closely the arguments from the work \cite{DS}.

Since every $B$-bundle is trivial at the general point, it is enough to 
prove that every $G$-bundle $P$ over $C_O$ (resp., $C_n$) admits a $B$-structure.  
Note that a $B$-structure over $C_O$ (resp., $C_n$) is a section of a $G/B$-bundle $P/B$ over $C_O$ (resp., over $C_n$).
Hence, by Grothendieck's existence theorem (see \cite[Cor.\ 8.4.7]{FGA}), it is enough to construct a compatible family of $B$-structures on the restrictions $P_n$ of $P$ to $C_n$.
We do this by first constructing a $B$-structure of special type on $P_0$, and then showing that it extends to a required compatible family. 

The starting point is the fact that for $G$ such that the commutator subgroup $[G,G]$ is simply connected,
any $G$-bundle on the curve $C_0$ over finite field $k$, is trivial at the general point. Indeed,
for $G$ simply connected and semisimple, this is a theorem of Harder \cite{Harder} on the vanishing of the Galois cohomology $H^1(k(C_0),G)$.
In general, 
the result follows from the vanishing of  $H^1(k(C_0),[G,G])$ and $H^1(k(C_0),G/[G,G])$ (note that $G/[G,G]$ is a split torus).

Let $\De$ denote the set of simple roots with respect to $(T,B)$. Recall that for every root $\a$, we can define the degree $\deg_\a(F)\in\Z$ of a $B$-bundle $F$,
as the degree of the line bundle associated with $F$ and the homomorphism $B\to T\to \G_m$ given by $\a$. We claim that for every $N>0$ and every $G$-bundle $P_0$ on $C_0$
there exists a $B$-bundle $F_0$ inducing $P_0$ such that $\deg_\a(F_0)\le -N$ for every simple root $\a$. We prove this as in \cite[Prop.\ 3]{DS}. First, we observe that
if $P'_0$ and $P_0$ are isomorphic over an open subset then the assertions for $P_0$ and for $P'_0$ are equivalent. Indeed, we can choose an isomorphism 
of $P_0$ and $P'_0$ over $C_0\setminus S$, for a finite set of points $S$. Then any $B$-structure on $P_0$ induces one on $P'_0$ and the difference between 
the corresponding degrees $\deg_\a$ is bounded by a constant depending only on the isomorphism $P_0|_{C_0\setminus S}\simeq P'_0|_{C_0\setminus S}$.
Since any $P_0$ is trivial at the general point, it is enough to construct a $B$-structure with sufficiently negative degrees for the trivial $G$-bundle on $C_0$. 
Furthermore, we claim that it is enough to do this for the trivial $G$-bundle on $\P^1$. Indeed,
we can choose a finite morphism $f:C_0\to \P^1$ over $k$ and take the pull-backs of a $B$-structure under $f$.
In the case of the trivial $G$-bundle on $\P^1$, the existence of the needed $B$-structure is proved in Proposition \ref{B-str-prop} in the appendix.

Next, starting with a $B$-structure $F_0$ on $P_0$, with $\deg_\a(F_0)$ sufficiently small (in fact we need them $<2-2g$), we claim that it extends to a collection
of compatible $B$-structures $F_n$ on $P_n$, for all $n\ge 0$. Indeed, we can think of such $B$-structures as sections $\si_n:C_n\to P_n/B$. We just need
to check that each $\si_n$ extends to a section $\si_{n+1}$. As explained in \cite[Prop.\ 1]{DS}, the obstruction to such an extension lies in $H^1(C_0,V)$,
where $V$ is the vector bundle on $C_0$ associated with the $B$-bundle $F_0$ and with the $B$-representation $\fg/\fb$. Now the assumption that $\deg_\a(F_0)$
are sufficiently small for $\a\in\De$ implies that $H^1(C_0,V)=0$.
\end{proof}

\section{Hecke operators on $\Bun_G$}\label{hecke-bunG-sec}

In this section we will use Hecke operators on $\Bun_G$ to prove commutativity of the small Hecke algebra over $O/\fm^N$.

First, in Sec.\ \ref{hecke-bunG-K-sec} we connect standard Hecke operators for $G$-bundles on a curve $C$ over $K$ with the action of
the local Hecke algebra on $\H$-coinvariants of $\G$-representations described in Sec.\ \ref{coinv-sec}. We also establish commutativity of
a natural global Hecke algebra $H(C)$ associated with $C$ (see Theorem \ref{HC-comm-thm}): the case of two Hecke operators at the same point goes back to \cite{BK-Hecke},
while the case of two Hecke operators at different points is new (but not difficult).

Then in Sec.\ \ref{hecke-K-O-sec}, we consider actions of local Hecke algebras on $G$-bundles for curves over $O$ and $O/\fm^N$.
Note that we do not have explicit descriptions of the Hecke operators over $O$ or over $O/\fm^N$ associated with non-minuscule weights.

Finally, in Sec.\ \ref{comm-hecke-sec}, we give a global proof of Theorem \ref{small-hecke-thm}, using the connection with global Hecke operators over $K$
and nice $G$-bundles.

\subsection{Hecke operators on the Schwartz space of half-densities on $\Bun_G$}\label{hecke-bunG-K-sec}

We refer to \cite{BK} for more details on some constructions and assertions sketched below.

As before, $G$ is a connected split reductive group over $\Z$, 
$C$ is a smooth complete irreducible curve over  $K$.

We write $\bo$ instead of  $\bo _{\Bun_G}$ and fix 
a square root $\bo ^{1/2}$ of $\bo$ and a square root $\bo _C^{1/2}$ of $\bo _C$ (which we assume to exist over $K$). 

Let us consider the space 
$$\mcW := \mcS (\Bun_G,|\bo ^{1/2}|).$$

\sms

For each $i\ge 0$, let us consider the 
$G[t]/(t^i)$-torsor $\Bun^{i,v,t}_G\to \Bun_G$, defined, over $O$, classifying $G$-bundles together with a trivialization on the $i$-th infinitesimal neighborhood of $v$ given by the ideal $(t^i)$
(we will often write $\Bun^{i,v}_G=\Bun^{i,v,t}_G$ for brevity).
For any open substack of finite type $\YY\sub \Bun_G$, we have the corrresponding torsor $\YY^{i,v}\to \YY$, which is a smooth scheme of finite type for $i\gg 0$.
We denote by $\YY^{\infty,v}$ the corresponding object of $\Pro(Sch^{ft})$, where $Sch^{ft}$ is the category of schemes of finite type.
Let us denote by $\Bun_G^{\infty,v}$ the object $(\YY^{\infty,v})_{\YY}$ of $\Ind(\Pro(Sch^{ft}))$. 

Abusing the notation, we denote by $\bo_{\Bun_G}^{1/2}$ the pull-back of the square root of the canonical bundle on $\Bun_G$ to $\Bun_G^{\infty,v}$.
We have a natural action of $G(\!(t)\!)$, viewed as a group in $\Ind(\Pro(Sch^{ft}))$, on $\Bun_G^{\infty,v}$ (see \cite[Sec.\ 7]{GK2}). This action does not change a $G$-bundle away from $v$,
and multiplies the transition function on the formal punctured neighborhood of $v$ by an element of $G(\!(t)\!)$.

Furthermore, we have a compatible action of the central extension $\hat{G}_{crit}$ at the critical level on the line bundle $\bo_{\Bun_G}^{1/2}$ over $\Bun_G^{\infty,v}$,
such that the center $\G_m$ acts with weight $1$. Note that the action of $G(\!(t)\!)$ on $\Bun_G^{\infty,v}$ fits into a commutative diagram,
where the horizontal maps are given by $(g,x)\mapsto g^{-1}x$,
\begin{equation}\label{hecke-action-diagram}
\begin{diagram}
G(\!(t)\!)\times \Bun_G^{\infty,v}&\rTo{}& \Bun_G^{\infty,v}\\
\dTo{}&&\dTo{}\\
\Gr_G\times \Bun_G^{\infty,v}&\rTo{\b}& \Bun_G
\end{diagram}
\end{equation}
and the above statement about the weight-$1$ action of $\hat{G}_{crit}$ on $\bo_{\Bun_G}$ corresponds to an isomorphism
\begin{equation}\label{hecke-action-line}
\b^*\bo_{\Bun_G}^{1/2}\simeq p^*\LL_{crit}^{-1}\ot q^*\bo_{\Bun_G}^{1/2},
\end{equation}
where $p$ and $q$ are the projections onto the factors $\Gr_G$ and $\Bun_G^{\infty,v}$ (the appearance of $\LL_{crit}^{-1}$ is due to the fact that we use the action of $g^{-1}$).
The latter isomorphism descends to an isomorphism of line bundles on the Hecke correspondence that we will use later (see \eqref{hecke-line-isom}).

Following \cite{GK2} we set $W^i:=\varinjlim_{\YY} \SS(\YY^{i,v}(K),|\bo_{\Bun_G}|^{1/2})$.
Due to the above action of $\hat{G}_{crit}$ on the line bundle $\bo_{\Bun_G}^{1/2}$, 
the pro-vector space
$$W:=``\varprojlim_i" W^i$$ 
acquires a structure of an object of $\Rep_{crit}(\G)$, such that
$W_{\G^i}\simeq W^i$. In particular, 
$$\WW:=W^0=\SS(\Bun_G(K),|\bo|^{1/2})\simeq W_{\H}.$$ 
Thus, we get an action of the local Hecke algebra $\HH(\GG,\HH)_{crit}$ on $\SS(\Bun_G(K),|\bo|^{1/2})$.

Note that we can also define this action without a choice of a formal parameter $t$ at $v$, by replacing $K[\![t]\!]$ with the algebra $\hat{\OO}_{C,v}$
(see Sec.\ \ref{coord-free-sec}).


The action of the elements $h^\la\in \HH(\GG,\HH)_{crit}$ on $\SS(\Bun_G(K),|\bo|^{1/2})$, for $\bl \in \bL ^+$ and $v\in C(K)$, can be described in terms of the Hecke correspondences.

Let us denote by $Z_{\bl ,v}$ the Hecke correspondence which is the stack of triples 
$(\mcF , \mcG ,j)$ where $ \mcF , \mcG \in \Bun_G$ and $j: \mcF _{C\sm v}\to \mcG _{C\sm v} $ is an isomorphism which is in the position $\mu\le\bl$ at $v$. The natural projections $p_1,p_2: Z_{\bl ,v} \to \Bun_G$ are representable proper maps. If $\bl$ is a minuscule coweight then these projections are also smooth.

The map
$\b:\Gr_G\times \Bun_G^{\infty,v}\to \Bun_G$ (see \eqref{hecke-action-diagram})
gives a collection of maps
$$Z^i_{\bl,v}:=\ov{\Gr}_{\la}\times \Bun_G^{i,v}\rTo{\b_\la} \Bun_G,$$
with $i=i(\la)$,
The maps $\b_\la$ induce the action maps \eqref{Gr-action-maps} for $W\in \Rep_{crit}(\G)$. Furthermore, $\b_\la$ factors as a composition
$$\b_\la:Z^i_{\bl,v}\rTo{\pi_\la} Z_{\bl,v}\rTo{p_2} \Bun_G,$$
where $\pi_\la$ is a $\H/\G^i$-torsor. Also, the natural
projection $Z^i_{\bl,v}\to \Bun_G^{i,v}\to \Bun_G$ coincides with $p_1\pi_\la$.

The isomorphism \eqref{hecke-action-line} descends to an isomorphism of line bundles on $Z_{\bl,v}$,
$$p_2^*\bo^{1/2}\simeq p_1^*\om^{1/2}\ot p^*\LL_{crit}^{-1},$$
where $p$ is the projection to the quotient stack $[\Gr_G/G[\![t]\!]]$.
Using the isomorphism \eqref{L-crit-isom}, we can rewrite this as
\begin{equation}\label{hecke-line-isom}
p_1^* \bo ^{1/2}\rTo{\sim} p_2^* \bo ^{1/2} \otimes \bo _{p_2}\otimes L_{\bl,v},
\end{equation}
where $L_{\bl,v}$ is a $1$-dimensional space depending on $\bl$ and $v$
(in fact $L_{\bl,v}= \bo_C|_v^{- <\bl ,\rho ^\vee>}$, where 
$\rho ^\vee$ is the sum of positive coroots).

Thus, in the case when $\bl$ is minuscule we have a well defined composition
\begin{align*}
&T_v^\bl: \SS(\Bun_G(K),|\bo^{1/2}|)\rTo{p_1^\ast} \SS(Z_{\bl,v}(K),|p_1^\ast(\bo^{1/2})|)\simeq \SS(Z_{\bl,v}(K),|p_2^\ast (\bo ^{1/2}) \otimes \bo _{p_2}|)\otimes |L_{\bl,v}|\\
&\rTo{p_{2,!}} \SS(\Bun_G(K),|\bo^{1/2}|)\ot |L_{\bl,v}|.
\end{align*}
Trivializing $L_{\bl,v}$, we can view $T_v^\bl$ as an operator on $\SS(\Bun_G(K),|\bo^{1/2}|)$ defined up to rescaling. 

In the case when $\bl$ is not necessarily minuscule, we can still define the Hecke operators using a resolution of $Z_{\bl,v}$.
Namely, 
the variety $\ov{\Gr}_{\bl}$ admits a $G(K[\![t]\!])$-equivariant smooth resolution
$$\rho:\wt{\Gr}_{\bl}\to \ov{\Gr}_{\bl}$$
such that $\rho^*\om_{\ov{\Gr}_{\bl}}\simeq \om_{\wt{\Gr}_{\bl}}(-E)$ for some effective divisor $E$ (see \cite{Faltings}). 
Let us set
$$\wt{Z}_{\bl,v}=\wt{\Gr}_{\bl}\times_{G[\![t]\!]/G_i} \Bun_G^{i,v}, \ \ D:=E\times_{G[\![t]\!]/G_i} \Bun_G^{i,v}.$$
We have a natural projection $\rho:\wt{Z}_{\bl,v}\to Z_{\bl,v}$, which is a simultaneous resolution of singularities of the fibers of $p_2$.

Furthermore, locally on a smooth covering $S$ of $\Bun_G$, the map $p_2\pi$ has the form $\wt{\Gr}_{\bl}\times S\to S$, so that the divisor
$D$ corresponds to $E\times S$.
Thus, as was explained in Sec.\ \ref{varieties-sec}, we have a well defined push-forward map
$$(p_2)_!:\SS(Z_{\bl,v}(K),|p_2^*\om^{1/2}\ot \om_{p_2}|)\to \SS(\Bun_G(K),|\bo^{1/2}|).$$


Hence, using the isomorphism \eqref{hecke-line-isom}, 
we can still define the operator $T_v^\bl$ on $\SS(\Bun_G(K),|\bo^{1/2}|)$ 
as the composition $(p_2)_!p_1^*$. It is easy to see that this definition does not depend on a choice of resolution of $\ov{\Gr}_{\bl}$.

\begin{lemma}
Under the identification $\WW=W_\H$, the action of the element $h^\la\in \HH(\G,\H)_{crit}$ on $W_\H$ coincides (up to rescaling) with the operator $T_v^\bl$ defined above.
\end{lemma}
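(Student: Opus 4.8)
The plan is to compare two descriptions of the same operator by unwinding the definition of the Hecke algebra action on coinvariants given in Lemma~\ref{coinv-action-lem} and matching it against the geometric construction of $T_v^\bl$ via the Hecke correspondence $Z_{\bl,v}$. Recall that by the explicit part of Lemma~\ref{coinv-action-lem}, since $h^\la$ comes from the functional $h^\la_0\in\Hom_{\H}(\SS(\ov{\Gr}_\la(K),|\LL_{crit}|),\C)\ot L_\la$ given by integration over $\Gr_\la(K)$, its action on $W_\H=\WW$ is induced by the composition
$$W_{\G_{i(\la)}}\rTo{\a_\la^*}\SS(\ov{\Gr}_\la(K),|\LL_{crit}|)\ot W_{\H}\rTo{h^\la_0\ot\id} W_\H,$$
where $\a_\la$ is the action map \eqref{Gr-action-maps} coming from $(g,x)\mapsto g^{-1}x$ on $\Bun_G^{\infty,v}$. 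So the first step is to identify the map $\a_\la^*$ concretely: by the commutative diagram \eqref{hecke-action-diagram} and the factorization $\b_\la:Z^i_{\bl,v}\rTo{\pi_\la}Z_{\bl,v}\rTo{p_2}\Bun_G$, together with $p_1\pi_\la$ being the natural projection $Z^i_{\bl,v}\to\Bun_G^{i,v}\to\Bun_G$, the map $\a_\la^*$ on Schwartz spaces is exactly $p_1^*$ followed by descent along $\pi_\la$ (which is a $\H/\G^i$-torsor) and the identification of the source with $\SS(\ov{\Gr}_\la(K),|\LL_{crit}|)\ot\WW$ via the line bundle isomorphism on $Z_{\bl,v}$.

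The second step is to match the line bundle twists. The isomorphism \eqref{hecke-action-line}, $\b^*\bo_{\Bun_G}^{1/2}\simeq p^*\LL_{crit}^{-1}\ot q^*\bo_{\Bun_G}^{1/2}$, is precisely the weight-$c$ compatibility built into the $\hat{G}_{crit}$-action that makes $W$ an object of $\Rep_{crit}(\G)$; descending it gives \eqref{hecke-line-isom}, $p_1^*\bo^{1/2}\rTo{\sim}p_2^*\bo^{1/2}\ot\bo_{p_2}\ot L_{\bl,v}$, after using \eqref{L-crit-isom} $\LL_{crit}|_{\Gr_\la}\simeq\om_{\Gr_\la}\ot L_\la$ (extended to $\ov{\Gr}_\la$ as noted). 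Thus applying $h^\la_0\ot\id$ — i.e., integrating over the $\Gr_\la$-direction, which is exactly integration in the fibers of $p_2$ after the smooth resolution $\wt{\Gr}_\la\to\ov{\Gr}_\la$ — reproduces the map $p_{2,!}$ appearing in the definition of $T_v^\bl$. Putting these together: $h^\la$ acts as $p_{2,!}\circ(\text{descent})\circ p_1^*$, which up to the chosen trivialization of $L_{\bl,v}$ is exactly $T_v^\bl=(p_2)_!p_1^*$.

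A third, more technical, point is that one must check the absolute convergence of the integral defining $h^\la_0$ is compatible with the fiberwise integration $p_{2,!}$ used to define $T_v^\bl$; this is handled because $\ov{\Gr}_\la$ is Gorenstein with rational singularities and the resolution $\rho:\wt{\Gr}_\la\to\ov{\Gr}_\la$ with $\rho^*\om\simeq\om(-E)$ works locally in a product form over the base, which is precisely the setup under which the push-forward map for Gorenstein morphisms was constructed in Section~\ref{varieties-sec}. So the same resolution serves both to define $T_v^\bl$ for non-minuscule $\bl$ and to make sense of the action map $\a_\la$ on Schwartz spaces, and the two constructions literally coincide.

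The main obstacle I anticipate is bookkeeping of the coinvariants and the pro-structure: one needs to verify that the map $\a_\la^*:W_{\G_{i(\la)}}\to\SS(\ov{\Gr}_\la(K),|\LL_{crit}|)\ot W_\H$ from \eqref{Gr-action-maps}, after passing to $\H$-coinvariants on the base $\Bun_G^{i,v}$, descends correctly along the $\H/\G^i$-torsor $\pi_\la:Z^i_{\bl,v}\to Z_{\bl,v}$ and yields a well-defined map on $W_\H=\WW$ independent of $i$. This is essentially the same descent argument as in \cite[Sec.\ 7]{GK2}, and the compatibility of the line bundle identifications \eqref{hecke-action-line} and \eqref{hecke-line-isom} under this descent is the crux; once that is in place, the "up to rescaling" ambiguity is exactly the trivialization of the one-dimensional space $L_{\bl,v}=\bo_C|_v^{-\langle\bl,\rho^\vee\rangle}=L_\la$ (after identifying $v$ with the chosen formal parameter $t$), and the lemma follows.
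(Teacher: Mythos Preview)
Your proposal is correct and follows essentially the same approach as the paper: the paper's own proof simply says the lemma follows from Lemma~\ref{coinv-action-lem} together with the relation between the Hecke correspondence and the $G(\!(t)\!)$-action on $\Bun_G^{\infty,v}$ described just before the statement, and you have faithfully unwound exactly those two ingredients (the explicit formula for the action on coinvariants, and the factorization $\b_\la=p_2\circ\pi_\la$ combined with the line bundle isomorphisms \eqref{hecke-action-line}, \eqref{hecke-line-isom}). Your additional remarks on convergence and descent along the $\H/\G^i$-torsor are appropriate elaborations of details the paper leaves implicit.
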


\begin{proof} This follows from Lemma \ref{coinv-action-lem} and from the relation between the Hecke correspondence and $G(\!(t)\!)$-action on $\Bun_G^{\infty,v}$ described above.
\end{proof}

\begin{definition} 
We denote by $H(C)\subset \End (\mcW) $ the subalgebra generated by the operators $ T_v^\bl$ for 
$v\in C(K),\bl \in \bL^+$. 
\end{definition}

\begin{theorem}\label{HC-comm-thm} 
The algebra $H(C)$ is commutative.
\end{theorem}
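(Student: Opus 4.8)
The plan is to prove commutativity of $H(C)$ by reducing to two separate statements: commutativity of the operators $T_v^\bl$ attached to a single point $v$, and commutativity of operators $T_{v}^\bl$ and $T_{w}^\mu$ attached to distinct points $v\neq w$ of $C(K)$. For the single-point case, the key observation is that for a fixed $v$, the operators $T_v^\bl$ (for $\bl\in\bL^+$) are, up to rescaling, the images under the action of the local Hecke algebra $\HH(\G,\H)_{crit}$ of the elements $h^\bl\in\HH(\G,\H)_{crit}\ot L_\bl$ constructed in Section~\ref{crit-level-sec}; this is the content of the Lemma immediately preceding the statement. Since $\WW=W_\H$ as a module over $\HH(\G,\H)_{crit}$, and the elements $h^\bl$ commute in $\HH(\G,\H)_{crit}$ by Theorem~\ref{BK-thm} (proved in \cite{BK-Hecke}), their images in $\End(\WW)$ commute. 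So the single-point commutativity is essentially a formal consequence of what has already been set up.

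For distinct points $v\neq w$, I would argue geometrically using the Hecke correspondences directly. Form the ``two-legged'' correspondence $Z_{\bl,v;\mu,w}$ parametrizing quadruples $(\mcF,\mcG,\mcH,j_1,j_2)$ where $j_1:\mcF_{C\sm v}\to\mcG_{C\sm v}$ has position $\le\bl$ at $v$ and $j_2:\mcG_{C\sm w}\to\mcH_{C\sm w}$ has position $\le\mu$ at $w$, and compare it with the one obtained by performing the modifications in the opposite order. Because $v$ and $w$ are distinct closed points, the modification at $v$ and the modification at $w$ commute: given $\mcF$, a modification at $v$ and a modification at $w$ can be performed independently and glued, producing a canonical isomorphism between the two iterated correspondences (both are naturally identified with the fiber product over $\Bun_G$ of $Z_{\bl,v}$ and $Z_{\mu,w}$, pulled back appropriately, since the local structure near $v$ is untouched by the operation at $w$ and vice versa). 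Under this identification the two projections to $\Bun_G$ get swapped in a way compatible with the line-bundle isomorphisms \eqref{hecke-line-isom} at $v$ and $w$ (the twists $L_{\bl,v}$ and $L_{\mu,w}$ live at different points and simply multiply), and the relative canonical bundles factor as a tensor product of the two one-point relative canonical bundles. Hence $T_v^\bl\circ T_w^\mu$ and $T_w^\mu\circ T_v^\bl$ are both computed as $(p_2)_!\,p_1^*$ over the same correspondence with matching line-bundle data, and therefore agree. In the non-minuscule case one uses the product of the resolutions $\wt{\Gr}_\bl$ and $\wt{\Gr}_\mu$ from \cite{Faltings} to make the pushforwards well-defined, exactly as in Section~\ref{hecke-bunG-K-sec}; the local-structure hypotheses needed for the pushforward to land in locally constant sections are inherited from the one-point case since the two resolutions interact trivially.

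I expect the main obstacle to be bookkeeping rather than conceptual: carefully matching up the line-bundle isomorphisms and relative dualizing sheaves on the iterated correspondence so that the two compositions literally coincide, including keeping track of the one-dimensional twist spaces $L_{\bl,v}$, $L_{\mu,w}$ and checking that the ``swap'' isomorphism of correspondences intertwines the two projection pairs. One should also verify that the fiber product $Z_{\bl,v}\times_{\Bun_G}Z_{\mu,w}$ is the correct object (using representability and properness of the legs from the stated properties of $p_1,p_2$) and that base change holds for the relevant pushforwards, so that the order of integration can be exchanged --- this is a Fubini-type argument valid because the pushforward maps \eqref{f!-var-eq} and their stacky versions compose well for representable proper (or the appropriately resolved Gorenstein) morphisms. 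None of these steps is hard, but they require care; alternatively, one can bypass the geometry entirely for $v\neq w$ by using the coordinate-free description of the local Hecke actions at $v$ and at $w$ and noting that the two actions on $W$ (as an object built from $\Bun_G^{\infty,v}$ and, in an evident bi-version, $\Bun_G^{\infty,v,w}$) commute because the group-indscheme actions of $G(\!(t_v)\!)$ and $G(\!(t_w)\!)$ on $\Bun_G^{\infty,v,w}$ commute, being supported at disjoint points --- then $T_v^\bl$ and $T_w^\mu$ commute as images of commuting operators. I would present the geometric argument as the main line and remark on the representation-theoretic shortcut.
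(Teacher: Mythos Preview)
Your approach is essentially the same as the paper's: the single-point case is handled exactly as you say, via Theorem~\ref{BK-thm}, and the two-point case is treated via the fiber-product correspondence $Z_{\bl,v}\times_{\Bun_G}Z_{\mu,v'}$.

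There is one genuine point, however, that you label ``bookkeeping'' but that actually requires an argument. You assert that under the identification of the two iterated correspondences the line-bundle isomorphisms match, so that the two compositions of \eqref{hecke-line-isom} (first at $v$ then at $v'$, versus first at $v'$ then at $v$) coincide. A priori these are two isomorphisms $\a_{v,v'}$ and $\a_{v',v}$ between the same line bundles, and they could differ by an invertible function $f_{v,v'}$ on the correspondence; your ``they simply multiply'' is not quite enough, since the isomorphism \eqref{hecke-line-isom} involves the square root $\om^{1/2}$ and there is no obvious reason the two orders give literally the same map. The paper does not prove $\a_{v,v'}=\a_{v',v}$ on the nose; instead it shows $\a_{v,v'}=\pm\a_{v',v}$, which suffices because the Hecke operator only sees $|\a|$. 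The argument is: $f_{v,v'}$ is an invertible function on the correspondence, hence constant along fibers over (connected components of) $\Bun_G$ since $\Bun_G$ has only constant global functions; varying $(v,v')$ makes it an invertible function on $(C\times C)\setminus\Delta$; the symmetry $f_{v,v'}f_{v',v}=1$ forces it to extend across the diagonal, hence to be constant, and then $f^2=1$. This is short, but it is an honest step rather than pure bookkeeping, and you should include it.
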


\begin{proof}
For a fixed point $v\in C$, the operators $T_v^\bl$ and $T_v^{\mu}$ commute by Theorem \ref{BK-thm}.
Let us now show that the operators $T_v^\bl$ and $T_{v'}^\mu$ for distinct points $v,v'\in C(K)$ commute.
Since we want to prove the equality of some convergent integrals, we can work over open parts of our correspondences.
The composition $T_v^\bl\circ T_{v'}^\mu$ is given by the composed correspondence
$$Z(v',v):=Z_{\mu,v'}\times_{\Bun_G} Z_{\bl,v}$$
equipped with projections $p_1,p_2$ to $\Bun_G$, and by the isomorphism
$$\a_{v',v}:p_1^*\om^{1/2}\rTo{\sim} p_2^*\om^{1/2}\ot \om_{p_2}\ot L_{\bl,v}\ot L_{\mu,v'}$$
induced by \eqref{hecke-line-isom}.
The composition $T_{v'}^\mu\circ T_v^\bl$ is given by the correspondence $Z(v,v')$ which is naturally identified with $Z(v',v)$ and
some isomorphism $\a_{v,v'}$ between the same line bundles.
Note that the composed operators depend only on the absolute value of the isomorphisms $\a_{v,v'}$ and $\a_{v',v}$. We claim that in fact 
$$\a_{v',v}=\pm \a_{v,v'},$$
which implies the required commutativity.

Indeed, to prove this we can work over an algebraically closed field.
A priori we have $\a_{v',v}=f_{v',v}\cdot \a_{v,v'}$ for some invertible function $f_{v,v'}$ on $Z(v,v')=Z(v',v)$. Note that by construction, we have
\begin{equation}\label{fvv'-sym-eq}
f_{v,v'}\cdot f_{v',v}=1.
\end{equation}
Let us work over a fixed connected component of $\Bun_G$ via $p_2$.
Then all global functions on this component are constant, hence, all global functions on the corresponding component of $Z(v,v')$ are constant,
so the restriction of $f_{v,v'}$ is constant. Now let us vary the points $v$ and $v'$, so that $f_{v,v'}$ will become an invertible function on the complement of the diagonal in 
$C\times C$. The relation \eqref{fvv'-sym-eq} shows that in fact, $f_{v,v'}$ is regular on $C\times C$, hence constant. Now the same relation shows that $f_{v,v'}=\pm 1$.
\end{proof}




\subsection{Hecke operators over $K$, over $O$, and over $O/\fm^n$}\label{hecke-K-O-sec}

Now, assume $C$ is a smooth proper curve over $O$. Then the stack $\Bun_G$ is also defined over $O$, and 
we can consider the spaces 
$$\mcW_O  := \mcS  (\Bun_G(O),|\bo ^{1/2}|)=\mcS (\Bun_G(O)), \ \ \mcW_N :=\mcS(\Bun_G(O/\fm^N)),$$
where in the second case we consider finitely supported functions on the isomorphism classes of $G$-bundles on 
$C_n:=C\times_{\Spec(O)}\Spec(O/\fm^n)$.
We have natural maps constructed in Sec.\ \ref{KO-stacks-sec},
\begin{equation}\label{W-maps-eq}
\mcW_N\to \mcW_O\to \mcW,
\end{equation}
where the image of the map $\mcW_O\to \mcW$ consists of densities supported on $\Bun^O_G$ (see Prop.\ \ref{adm-surj-prop}).


Given a point $v\in C(O)=C_K(K)$, we have homomorphisms of local Hecke algebras
\begin{equation}\label{Hecke-alg-maps-eq}
\HH_{G,\hat{\OO}_{C_K,v},crit}\rTo{\nu_{K,O}} \HH_{G,\hat{\OO}_{C,v},I_v}\rTo{\nu_{O,O/\fm^N}} \HH_{G,\hat{\OO}_{C_n,\ov{v}}},
\end{equation}
where $\ov{v}$ is the reduction of $v$ modulo $\fm$. We also have three algebra actions:
\begin{itemize}
\item $\HH_{G,\hat{\OO}_{C_K,v},crit}$-action on $\mcW$;
\item $\HH_{G,\hat{\OO}_{C,v},I_v}$-action on $\mcW_O$;
\item $\HH_{G,\hat{\OO}_{C_n,\ov{v}}}$-action  on $\mcW_N$.
\end{itemize}
We claim that the maps \eqref{W-maps-eq} are compatible with these Hecke actions via homomorphisms \eqref{Hecke-alg-maps-eq}.
Indeed, for $\nu_{K,O}$ this follows immediately from Lemma \ref{nu-KO-comp-lem}.
For $\nu_{O,O/\fm^N}$ the compatibility of the Hecke actions on coinvariants was explained in Sec.\ \ref{hecke-hom-sec}, as part of the definition.

\begin{definition}
For $\bl\in \La^+$, let us set 
$$h^\la_O:=\nu_{K,O}(h^\la), \ \  h^\la_N:=\nu_{O,O/\fm^N}\nu_{K,O}(h^\la).$$
We denote by $T_{v,O}^{\bl}$ (resp., $T_{v,O/\fm^N}^{\bl}$) the operator on $\WW_O$ (resp., $\WW_N$) given by the action of
$h^\la_O$ (resp., $h^\la_N$) associated with $v\in C(O)$. 
\end{definition}

\begin{lemma}
Assume $\bl$ is minuscule. Then for any point $v\in C(O)$, the operator $T_{v,O/\fm^N}^{\bl}$
coincides (up to rescaling) with the operators on $\SS(\Bun_G(O/\fm^n))$ coming from
the Hecke correspondence $Z_{\bl,v}$. 
\end{lemma}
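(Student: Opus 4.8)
The plan is to carry out, over $O/\fm^N$, the same computation that in the preceding lemma identifies the operator $T^{\bl}_v$ over $K$ with the Hecke correspondence $Z_{\bl,v}$, and then to make the resulting linear functional on $\SS(\ov{\Gr}_{\bl}(O/\fm^N))$ completely explicit by means of Weil's Lemma~\ref{Weil-lem}; the normalization constant produced by that lemma is precisely what accounts for the words ``up to rescaling''.

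First I would set up the $O/\fm^N$-analogue of the geometric picture of Sec.~\ref{hecke-bunG-K-sec}. The loop group $\G_N=G(O/\fm^N(\!(t)\!))$ acts on the pro-ind object $\Bun_G^{\infty,v}$ attached to $C_N$ exactly as over $K$, making the evident $O/\fm^N$-analogue $W_N$ of the representation $W$ of Sec.~\ref{hecke-bunG-K-sec} into an admissible object of $\Rep(\G_N)$ with $(W_N)_{\H_N}\simeq\SS(\Bun_G(O/\fm^N))=\WW_N$. By the $O/\fm^N$-analogue of Lemma~\ref{coinv-action-lem}, the operator $T^{\bl}_{v,O/\fm^N}$ — i.e.\ the action of $h^{\bl}_N$ on $(W_N)_{\H_N}$ — is induced by the composition
$$(W_N)_{\G^{i}_N}\ \xrightarrow{\ \a_{\bl}^*\ }\ \SS(\ov{\Gr}_{\bl}(O/\fm^N))\ot\WW_N\ \xrightarrow{\ h^{\bl}_N\ot\id\ }\ \WW_N ,$$
where $\a_{\bl}$ is the action map coming from $Z^{i}_{\bl,v}=\ov{\Gr}_{\bl}\times\Bun_G^{i,v}\to\Bun_G$. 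Since $\bl$ is minuscule, $\ov{\Gr}_{\bl}=\Gr_{\bl}$ is smooth and projective, $\a_{\bl}$ descends to the Hecke correspondence $Z_{\bl,v}$ over $O/\fm^N$ with its two smooth proper representable projections $p_1,p_2$, and — just as in the proof of the preceding lemma — for $\phi\in\WW_N$ the element $\a_{\bl}^*(\phi)$ is, after applying the line bundle isomorphism \eqref{hecke-line-isom}, the function $p_1^*\phi$ on $Z_{\bl,v}(O/\fm^N)$ viewed as a family over $\Gr_{\bl}(O/\fm^N)\times\Bun_G(O/\fm^N)$. Thus $(h^{\bl}_N\ot\id)\circ\a_{\bl}^*$ is the operator ``pull back along $p_1$, then apply $h^{\bl}_N$ in the $\Gr_{\bl}$-direction'', and it remains to check that this last step is, up to a scalar, summation along the fibres of $p_2$.

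Next I would identify the functional $h^{\bl}_N$. Unwinding the definition of $\nu_{K,O}$, and using $\Gr_{\bl}(O)=\Gr_{\bl}(K)$ (properness) together with the trivialization of $|\LL_{crit}|\simeq|\om_{\Gr_{\bl}}|\ot|L_{\bl}|$ over $O$-points, the functional $\nu_{K,O}(h^{\bl})$ sends $\psi\in\SS(\Gr_{\bl}(O))$ to $\int_{\Gr_{\bl}(O)}\psi$ against the canonical measure; the choice of a trivialization of the one-dimensional space $L_{\bl}$ (equivalently $L_{\bl,v}$) introduces exactly the scalar ambiguity that later appears as ``rescaling''. Unwinding the definition of $\nu_{O,O/\fm^N}$, $h^{\bl}_N(\phi)$ is $\nu_{K,O}(h^{\bl})$ applied to the pull-back of $\phi$ along $\Gr_{\bl}(O)\to\Gr_{\bl}(O/\fm^N)$, so Weil's Lemma~\ref{Weil-lem} gives
$$h^{\bl}_N(\phi)=\frac{\mathrm{const}}{|O/\fm^N|^{\dim\Gr_{\bl}}}\sum_{x\in\Gr_{\bl}(O/\fm^N)}\phi(x) .$$
Combining this with the first step yields $T^{\bl}_{v,O/\fm^N}=\mathrm{const}\cdot|O/\fm^N|^{-\dim\Gr_{\bl}}\cdot(p_2)_{O/\fm^N,*}\,p_1^*$, i.e.\ the asserted coincidence up to rescaling.

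I expect the main effort to be organizational rather than conceptual: writing down carefully the $O/\fm^N$-analogues of Lemma~\ref{coinv-action-lem} and of the identification of the action maps $\a_{\bl}$ with $Z_{\bl,v}$ (these transcribe verbatim, since $\Gr_G$, the correspondences $Z_{\bl,v}$, and the line bundle isomorphisms all live over $\Z$), and then keeping accurate track of the power of $|O/\fm^N|$ that Weil's lemma produces — that power being the true origin of the qualification ``up to rescaling''. As an alternative, one could instead deduce the lemma from the preceding one by combining the compatibility of the maps \eqref{W-maps-eq} with the Hecke actions and Proposition~\ref{int-point-integration-prop} applied to $p_1$ and $p_2$; that route, however, additionally requires comparing the two operators after the map $r_N\colon\WW_N\to\WW$, so the direct local computation above seems preferable.
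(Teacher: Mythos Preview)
Your main argument is correct and is essentially what the paper does: the paper's one-line proof cites Proposition~\ref{int-point-integration-prop}, which is precisely the stack-level form of Weil's Lemma~\ref{Weil-lem} you invoke to identify $h^{\bl}_N$ with (a constant times) summation over $\Gr_{\bl}(O/\fm^N)$, after which the $O/\fm^N$-analogue of Lemma~\ref{coinv-action-lem} gives the Hecke-correspondence description. Your remark that the alternative route through $r_N\colon\WW_N\to\WW$ would need an extra injectivity argument is well taken---$r_N$ is not injective in general---so the direct computation you chose is indeed the right one.
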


\begin{proof}
This follows from Proposition \ref{int-point-integration-prop}.
\end{proof}








\subsection{Filtration on the stack of $G$-bundles}

Here $G$ is a linear algebraic group, $C$ a smooth projective curve over a Noetherian base scheme $S$.
We denote by $\Bun_G(C)$ the moduli stack of $G$-bundles on $C$: to an $S$-scheme $T\to S$ it associates the groupoid
of $G$-bundles over $C_T=C\times_S T$.  
If $D\sub C$ is a positive relative Cartier divisor, then we denote by $\Bun_G(C,nD)$ the fibration over $\Bun_G$, corresponding to
choices of a trivialization of a $G$-bundle over $nD$.

\begin{prop}\label{filtration-prop}
There exists an exhaustive filtration $\UU_N\sub \UU_{N+1}\sub\ldots $ of $\Bun_G(C)$ by open substacks such that 
for each $N$ there exists $n_0$ such that for any positive relative Cartier divisor $D\sub C$ and any $n\ge n_0$, the fibered product 
$$\UU_N(nD):=\UU_N\times_{\Bun_G(C)}\Bun_G(C,nD)$$
is a separated scheme of finite type over $S$.

If $G$ is reductive, then in addition, for each $N$, every $S$-point $p\sub C$, 
and every dominant coweight $\la$ there exists $N'$, such that all Hecke transforms of coweight $\le \la$ at $p$
take $\UU_N$ to $\UU_{N'}$. 
\end{prop}

\begin{proof}
{\bf Step 1}.
First, assume $G=\GL_r$. For every vector bundle $V$ over $C_s$, where $s\in S$ is a point, let us denote by $\mu_+(V)$ and $\mu_-(V)$ the maximum and minimum slopes of the Harder-Narasimhan subquotients of $V$. Then we define $\UU_N$ to be the substack of vector bundles $V$ over $C_T$ (where $T$ is an $S$-scheme) 
such that $\mu_+(V_t)<N$ and $\mu_-(V_t)>-N$ for all $t\in T$. 
There is finitely many possible degrees for $V$ in $\UU_N$, so it is enough to prove our assertion for the component corresponding to a fixed degree $d$.

\noindent
{\bf Step 2}. Let us fix $N$. It is a standard fact that there exists $n_1$ (depending on $N$) such that for $n\ge n_1$, for all $V$ in $\UU_N(T)$ one has
$H^1(C_t,V_t(nD))=0$, $H^0(C_t,V_t(-nD))=0$ and $V_t(nD)$ is generated by global sections (see \cite[Lem.\ 5.2]{Newstead}). We set $n_0=2n_1$.

It follows that for $n\ge 2n_1$, the restriction map 
$$r_V: H^0(C_t,V_t(n_1D))\to H^0(V_t(n_1D)|_{nD_t})$$ 
is injective, while the corresponding map of bundles over $T$ is an embedding as a subbundle.

We can argue locally over $S$, so
we can assume $S$ to be affine and $H^0(C,\OO(n_1D)|_{nD})$ to be a free module over $\OO(S)$. Let us choose a basis in this $\OO(S)$-module.
Then for all $V\in \UU_N(T)$ equipped with a trivialization at $nD$, we get a basis of $H^0(C_t,V_t(n_1D|_{nD_t}))$,
so the map $r_V$ gives a map to the Grassmannian $T\to G(k,nr)\times S$, where $k=\chi(C,V(n_1D))=d+(n_1\deg(D)-g+1)r$.
Thus, we get a morphism
$$\pi:\UU_N(nD)\to G(k,nr)\times S.$$

\noindent
{\bf Step 3}. We claim that the map $\pi$ is representable and separated of finite type, which implies that $\UU_N(nD)$ is itself a separated scheme.
It is enough to prove that $\pi^{-1}(U_I\times S)$ is a separated scheme of finte type, where $U_I\sub G(k,nr)$ is the open affine cell associated with a subset
$I\sub \{1,\ldots,nr\}$ of size $k$. To this end we observe that the universal subbundle on the Grassmannian has a natural trivialization over $U_I$.
Hence, over $\pi^{-1}(U_I\times S)$, the spaces $H^0(C_t,V_t(n_1D))$ are equipped with a basis (i.e., the corresponding vector bundle over $\pi^{-1}(U_I\times S)$ is trivialized).
Since $V_t(n_1D)$ is a quotient of $H^0(C_t,V_t(n_1D))\ot \OO$, we get a natural morphism
$$j: \pi^{-1}(U_I\times S)\to \Quot(C/S,\OO^k)$$
to the relative Quot scheme of quotients of $\OO^k$. By the standard methods one checks that $j$ is a locally closed embedding (see \cite[Thm.\ 5.3]{Newstead}).

\noindent
{\bf Step 4}. Let $V$ be in $\UU_N$. Then for any $V'$ such that $V(-np)\sub V'\sub V(np)$, one has $V'$ is in $\UU_{N+n\deg(p)}$.
This implies the assertion about the Hecke transforms.

\noindent
{\bf Step 5}. Now we will consider the case of a general linear group $G$. Let us choose an embedding $G\hra \GL_r$ as a closed subgroup.
The corresponding morphism 
$$h:\Bun_G(C)\to \Bun_{\GL_r}(C)$$
is known to be representable and quasi-projective. More precisely, choices of a reduction of a structure group of a $\GL_r$-bundle $P$ from $\GL_r$ to $G$ correspond
to sections of the associated $\GL_r/G$-fibration $P/G$ over $C$, which is quasi-projective over $C$ (by Chevalley's theorem, see \cite[Sec.\ 3.6.7]{Sorger}). Thus, if $P_S$ if a family of $\GL_r$-bundles over 
$S$ then $h^{-1}(S)$ is representable by the scheme of sections of the $\GL_r/G$-fibration over $C\times S$ associated with $P_S$. The latter scheme is quasi-projective over $S$
(as follows from theory of Hilbert schemes, see \cite[Thm.\ 5.23]{FGA}).

Similarly, the morphism
$$h(nD):\Bun_G(C,nD)\to \Bun_{\GL_r}(C,nD)$$
is quasi-projective. The only change to make in the above argument is that now the $G/\GL_r$-fibration associated with an object of $\Bun_{\GL_r}(C,nD)$
is trivialized over $nD$, and we consider sections compatible with this trivialization. 

Let $\UU_{\GL_r,N}\sub \Bun_{\GL_r}$ denote the substacks defined in Step 1.
We claim that the filtration of $\Bun_G(C)$ given by the open substacks $h^{-1}(\UU_{\GL_r,N})$ has the required properties.
Indeed, we need to check that $h(nD)^{-1}(\UU_{\GL_r,N}(nD))$ are separated schemes. But this follows from the fact that $h(nD)$ is quasi-projective and the
fact that $\UU_{\GL_r,N}(nD)$ are separated schemes.

Finally, the compatibility with Hecke transforms (for reductive $G$) follows from the case of $\GL_r$.
\end{proof}

\subsection{Commuting Hecke operators}\label{comm-hecke-sec}

Now assume that $G$ is a split reductive group over $\Z$, such that its commutator subgroup $[G,G]$ is 
simply connected.
We also assume that 
the pair $(G,k=O/\fm)$ satisfies the assumption $\bf{Char_G}$ (see Sec.\ \ref{nice-G-sec}).

Let $C_{O/\fm^N}$ be a smooth proper curve over $O/\fm^N$, $C_0$ the corresponding curve over $k=O/\fm$.
For a $G$-bundle  $P$ over $C_{O/\fm^N}$, and a point $p\in C_{O/\fm^N}(O/\fm^N)$,
we have a map
$$h_p:\Gr_G(O/\fm^N)\to \Bun_G(O/\fm^N):x\mapsto P(x)$$
associating with a point of $\Gr_G$ the corresponding Hecke transform $P(x)$ at $p$ (to be precise, this map depends on a trivialization of $P$ in the formal neighhborhood of $p$). 
Note that for every point $x\in \Gr_G$, we have an isomorphism $\a_x:P\to P(x)$ on $C_N-p$.
Let us denote by $\Gr_n\sub \Gr_G$ the subscheme of $x$ such that $\a_x$ and $\a_x^{-1}$ have poles of order $\le n$ at $p$
(in the sense of Definition \ref{G-bun-order-pole-def}).

\begin{lemma}\label{Gr-inj-lem} 
Let $P$ be a $G$-bundle over $C_{O/\fm^N}$, such that the corresponding $G$-bundle over $C_0$ is nice of level $2n$ at $p\mod \fm$. Then
the map $h_p|_{\Gr_n}:\Gr_n(O/\fm^N)\to \Bun_G(O/\fm^N)$ is injective;
\end{lemma}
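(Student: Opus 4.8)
## Proof plan for Lemma \ref{Gr-inj-lem}

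The plan is to reduce both assertions to the niceness hypothesis on $P|_{C_0}$ via Lemma \ref{G-nice-fin-ring-lem}, exploiting the fact that the level has been doubled from $n$ to $2n$ precisely so that compositions of order-$n$ maps stay within the controlled range. First I would set up the basic objects: for $x \in \Gr_n(O/\fm^N)$, the Hecke transform $P(x)$ is a $G$-bundle on $C_{O/\fm^N}$ together with an isomorphism $\a_x : P|_{C_N - p} \to P(x)|_{C_N - p}$ such that both $\a_x$ and $\a_x^{-1}$ have poles of order $\le n$ at $p$ (this is the defining condition of $\Gr_n$). Note that since $P|_{C_0}$ is nice of level $2n$, in particular $H^0(C_0, \fg_P(2np)|_{C_0}) = \fz(k)$, and $2n \ge n$, so $P|_{C_0}$ is also nice of level $n$; thus Lemma \ref{G-nice-fin-ring-lem} applies directly to $P$ at level $n$.

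For assertion (2), fix $x \in \Gr_n(O/\fm^N)$ and let $\psi$ be an automorphism of $P(x)$ over all of $C_{O/\fm^N}$. Conjugating by $\a_x$ gives an automorphism $\a_x^{-1} \psi \a_x$ of $P|_{C_N - p}$. Since $\a_x$ has a pole of order $\le n$, $\a_x^{-1}$ has a pole of order $\le n$, and $\psi$ is regular everywhere, the composite $\a_x^{-1}\psi\a_x$ has a pole of order $\le 2n$ at $p$ (poles add under composition, as one checks on the induced maps of $\fg_P$ — or of $W'_P$ in the $D_\ell$, $\ell$ even, case). By Lemma \ref{G-nice-fin-ring-lem} applied at level $2n$, this automorphism comes from an element of $Z_G(O/\fm^N)$; conjugating back (the center is fixed by conjugation) shows $\psi \in Z_G(O/\fm^N)$. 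Conversely $Z_G(O/\fm^N)$ clearly acts by automorphisms of any $G$-bundle, giving $\Aut(P(x)) = Z_G(O/\fm^N)$. Note the bundle $P(x)|_{C_0}$ need not itself be nice, which is why we must transport the automorphism back to $P$ rather than argue on $P(x)$ directly.

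For assertion (1), suppose $x, x' \in \Gr_n(O/\fm^N)$ with $P(x) \simeq P(x')$, via an isomorphism $\beta : P(x) \to P(x')$. Then $\gamma := \a_{x'}^{-1} \beta \a_x$ is an automorphism of $P|_{C_N - p}$. Again $\a_x$ has a pole of order $\le n$, $\a_{x'}^{-1}$ has a pole of order $\le n$, and $\beta$ is an isomorphism of bundles over $C_{O/\fm^N}$ hence regular at $p$, so $\gamma$ has a pole of order $\le 2n$. By Lemma \ref{G-nice-fin-ring-lem} at level $2n$, $\gamma \in Z_G(O/\fm^N)$, so $\gamma$ extends to an automorphism of $P$ over all of $C_{O/\fm^N}$. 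Then $\a_{x'} \gamma = \beta \a_x$ is an isomorphism $P|_{C_N-p} \to P(x')|_{C_N - p}$ that differs from $\a_{x'}$ by the central (hence everywhere-regular) automorphism $\gamma$ of $P$; this means $x$ and $x'$ define the same point of $\Gr_G(O/\fm^N) = G(\!(t)\!)(O/\fm^N)/G[\![t]\!](O/\fm^N)$, since the coset of the transition function is unchanged by composing with an automorphism of $P$ defined over $C_N$. Hence $x = x'$.

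The main obstacle I anticipate is bookkeeping the order of pole under composition: one must verify cleanly that if $\phi$ has a pole of order $\le a$ and $\psi$ a pole of order $\le b$ at $p$ (in the sense of Definition \ref{G-bun-order-pole-def}, i.e.\ via the adjoint bundle, resp.\ via $W'_P$ in the even $D_\ell$ case), then $\psi\phi$ has a pole of order $\le a+b$, and likewise that a regular isomorphism of bundles contributes $0$ to the pole order. This is routine but needs the representation $\fg$ (resp.\ $W'$) to be a faithful representation detecting the full automorphism group — which is exactly why the definitions were set up using $\fg_P$, together with $W'_P$ in the problematic $D_\ell$ case, and why Lemma \ref{principal-sl2-lem} was proved. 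The only subtlety is making sure the ``order of pole'' notion is subadditive in the required sense for $G$-bundle automorphisms, not just for vector bundle maps; once that is in hand, the two reductions to Lemma \ref{G-nice-fin-ring-lem} are immediate.
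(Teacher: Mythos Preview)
Your proof is correct and follows essentially the same route as the paper's: both parts are reduced to Lemma \ref{G-nice-fin-ring-lem} by conjugating (or composing) through the $\a_x$'s to produce an automorphism of $P|_{C_N-p}$ with pole of order $\le 2n$, then invoking that the center acts trivially on $\Gr_G$. Your treatment of part (1) is in fact slightly more explicit than the paper's, since you introduce the isomorphism $\beta:P(x)\to P(x')$ witnessing $h_p(x)=h_p(x')$ before forming $\a_{x'}^{-1}\beta\a_x$, whereas the paper writes $\a_x^{-1}\a_{x'}$ directly; and your flagged ``obstacle'' about subadditivity of pole order is taken for granted in the paper as well.
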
 

\begin{proof}
For every pair $x,x'\in \Gr_n$, the automorphism $\a_x^{-1}\a_{x'}$ of $P|_{C_N-p}$ has a pole of order $\le 2n$ at $p$.
Then by Lemma \ref{G-nice-fin-ring-lem}, there exists an element $z\in Z_G(O/\fm^N)$ such that
$\a_x^{-1}\a_{x'}=z$. Hence, $\a_{x'}=z\a_{x}=\a_{zx}=\a_x$, where we use the fact that $Z_G(O/\fm^N)\sub Z_G(O/\fm^N[\![t]\!])$ acts trivially on $\Gr_G$. Hence, $x'=x$.
\end{proof}


Now let 
$C_O$ be a smooth proper curve over $O$, with $C_k$ the corresponding curve over $k$.
Let $v_1,v_2\in C_O(K)=C_O(O)$ be distinct points with the same reduction $v_0\in C_k(k)$. 
We fix a pair of dominant coweights $\la_1,\la_2$, and also fix a relative Cartier divisor $D\sub C_O$ disjoint from $v_0$.
We consider the corresponding Hecke operators $T_1$ and $T_2$ on $\SS(\Bun_G(C_O,nD)(O/\fm^N))$, coming 
from the homomorphisms of Hecke algebras $\nu_{K,O/\fm^N}$ for $v_1$ applied to $h^{\la_1}$ and for $v_2$ applied to $h^{\la_2}$.

Note that by Lemma \ref{gen-tr-O-lem}(ii), every $G$-bundle over $C_O$ admits a $B$-structure, and so is Zariski locally trivial.

\begin{lemma}\label{main-lem}
Let $\UU_m\sub\Bun_G(C_O)$ denote one of the open substacks from Proposition \ref{filtration-prop} (defined for the curve $C_O$ over $\Spec(O)$).
Then there exists $n_0$, such that for $n\ge n_0$, one has
$T_1T_2=T_2T_1$ on $\SS(\UU_m(nD)(O/\fm^N))\sub \SS(\Bun_G(C_O,nD)(O/\fm^N))$.
\end{lemma}

\begin{proof}
\noindent
{\bf Step 1}. 
Recall that by Proposition \ref{filtration-prop}, $\UU_m(nD)$ is a separated scheme of finite type over $O$, provided $n$ be sufficiently large.
Hence, for such $n$ the natural map $\UU_m(nD)(O)\to \UU_m(nD)(K)$ is injective, by the valuative criterion of separatedness.
Therefore, the map
$$E_{N,|\om|^{1/2}}:\SS(\UU_m(nD)(O/\fm^N))\to \SS(\UU_m(nD)(K),|\om|^{1/2})$$
is injective. 

\medskip

\noindent
{\bf Step 2}. 
For a moment let us work over $K$, and let $C=C_K$. By abuse of notation we denote $D_K$ simply by $D$.
Recall that for a point $v\in C(K)$ and a dominant coweight $\la$, we have the proper corrrespondence $Z_{\la,v}\rTo{p_1,p_2} \Bun_G(C)^2$.
If $v$ is disjoint from $D$ then we have a natural version of this correspondence with trivializations over $nD$:
$$Z_{\la,v}(nD)\rTo{p_1,p_2} \Bun_G(C,nD)^2.$$

By Proposition \ref{filtration-prop}, for every $m$, there exists $m'$ such that we have an inclusion
$$Z_{\la,v}^{\UU_m}(nD):=p_1^{-1}(\UU_m(nD))\sub p_2^{-1}(\UU_{m'})\sub Z_{\la,v}(nD).$$
Thus, we can view $Z_{\la,v}^{\UU_m}(nD)$ as a correspondence
$$\UU_m(nD)\lTo{p_1}Z^{\UU_m}_{\la,v}(nD)\rTo{p_2} \UU_{m'}(nD)$$
with $p_1$ proper. Together with the isomorphism of relevant line bundles (see Eq.\ (5.3)), this gives an operator 
$$T_v^{\la,\UU_m}:\SS(\UU_m(nD)(K),|\om|^{1/2})\to \SS(\UU_{m'}(nD)(K),|\om|^{1/2}).$$

Next, for a pair of points $v_1\neq v_2$, disjoint from $D$, and dominant coweights $\la_1,\la_2$, for each $m$, we can choose $m'$ and $m''$, so that the Hecke correspondences corresponding
to $(v_1,\la_1)$ and $(v_2,\la_2)$ give operators
$$\SS(\UU_m(nD)(K),|\om|^{1/2})\rTo{T_{v_2}^{\la_2,\UU_m}} \SS(\UU_{m'}(nD)(K),|\om|^{1/2})\rTo{T_{v_1}^{\la_1,\UU_{m'}}}\SS(\UU_{m''}(nD)(K),|\om|^{1/2}).$$
The composed operator 
is induced by the correspondence $Z(v_1,v_2)^{\UU_m}\sub Z(v_1,v_2)$, defined as the preimage of $\UU_m$ under the first projection.

For each $m$, we can choose $m'$ and $m''$ sufficiently large, so that the above assertion holds for the compositions of Hecke operators at $v_1$ and $v_2$
in both orders.
Then, 
using the equality of correspondences $Z(v_1,v_2)=Z(v_2,v_1)$ and compatibility of isomorphisms of line bundles as in the proof of Theorem 5.3,
we get the equality of operators
$$T_{v_1}^{\la_1,\UU_{m'}}\circ T_{v_2}^{\la_2,\UU_m}=T_{v_2}^{\la_2,\UU_{m'}}\circ T_{v_1}^{\la_1,\UU_m}:\SS(\UU_m(nD)(K),|\om|^{1/2})\to \SS(\UU_{m''}(nD)(K),|\om|^{1/2}).$$

\medskip

\noindent
{\bf Step 3}. 
Now we consider the situation in the formulation, so $v_1$ and $v_2$ are distinct $O$-points 
with the same reduction $v_0$ modulo $\fm$, and $T_1$ and $T_2$ are Hecke operators on 
$\SS(\Bun_G(nD)(O/\fm^N))$ (where $T_i$ is associated with $v_i$ and $\la_i$).

Note that $T_2$ (resp., $T_1$) sends $\SS(\UU_m(nD)(O/\fm^N))$ to $\SS(\UU_{m'}(nD)(O/\fm^N))$ (resp., $\SS(\UU_{m'}(nD)(O/\fm^N))$ to $\SS(\UU_{m''}(nD)(O/\fm^N))$, and
it is easy to check that the following diagram is commutative:
\begin{diagram}
\SS(\UU_m(nD)(O/\fm^N))&\rTo{T_2}&\SS(\UU_{m'}(nD)(O/\fm^N))&\rTo{T_1}&\SS(\UU_{m''}(nD)(O/\fm^N))\\
\dTo{E_N}&&\dTo{E_N}&&\dTo{E_N}\\
\SS(\UU_m(nD)(K),|\om|^{1/2})&\rTo{T_{v_2}^{\la_2,\UU_m}}&\SS(\UU_{m'}(nD)(K),|\om|^{1/2})&\rTo{T_{v_1}^{\la_1,\UU_{m'}}}&\SS(\UU_{m''}(nD)(K),|\om|^{1/2})
\end{diagram}
where $E_N=E_{N,|\om|^{1/2}}$. We also have a similar commutative diagram with $T_1$ and $T_2$ swapped.
Using Step 2 and injectivity of $E_N$ on $\SS(\UU_{m''}(nD)(O/\fm^N))$ (by Step 1 applied to $m''$), we deduce that  for sufficiently large $n$, we have
$T_1T_2=T_2T_1$ on $\SS(\UU_m(nD)(O/\fm^N))$. 
\end{proof}

\medskip

\begin{proof}[Proof of Theorem 3.10]
Consider the elements $h_1,h_2$ in the local Hecke algebra $\HH_{G,O/\fm^N[\![t]\!]}$ associated with 
a pair of dominant coweights $\la_1,\la_2$ and a pair of ideals $I_1,I_2\sub O[\![t]\!]$ complementary to $O$.

We can choose a curve $C_0$ over $k$ with a point $v_0\in C_0(k)$ and a nice $G$-bundle $P_0$ over $C_0$ of sufficiently
large level with respect to $v_0$ (using Proposition \ref{nice-G-bun-prop}). We can lift the pair $(C_0,P_0)$ to a pair $(C,P)$, consisting of a curve and a $G$-bundle over $O/\fm^N$.
Finally, we can find points $v_1,v_2\in C(O)$ lifting $v_0$, corresponding to the ideals $I_1,I_2$. 

Then we have $P\in \UU_m$ for some $m$. Let us choose some positive relative Cartier divisor $D\sub C_O$, disjoint from $v_0$, and choose a trivialization of $P$ at $nD$
(where $n$ is sufficiently large). Then the corresponding delta-function $\de_P$ is in $\SS(\UU_m(nD)(O/\fm^N))$, so by Lemma \ref{main-lem}, 
\begin{equation}\label{comm-P-eq}
T_1T_2\de_P=T_2T_1\de_P.
\end{equation}

Since the operators $T_i$ correspond to the elements $h_i$ in the local Hecke algebra, and the compositions $T_1T_2$ and $T_2T_1$ come from the corresponding compositions
$h_1h_2$ and $h_2h_1$ in the local Hecke algebra, the equality \eqref{comm-P-eq} together with Lemma \ref{Gr-inj-lem} imply that $h_1h_2=h_2h_1$.
\end{proof}

\appendix

\section{Constructions of $G$-bundles with generic behavior}
\smallskip
\begin{center}by \textsc{Alexander Polishchuk and Ka Fai Wong}\end{center}
\medskip

\subsection{Constructing a sufficiently generic pointed curve with a line bundle of degree $0$}

Let $k$ be an arbitrary field.
As a starting point for the construction of nice $G$-bundles in the next section,
we need to construct a geometrically irreducible smooth projective curve $C$ over $k$, a point $p\in C(k)$, and a line bundle $L$ of degree zero on $C$, with sufficiently
generic cohomological behavior.

\begin{lemma}\label{CLp-lem}
    Let $n$ and $l$ be some positive integers. There exist a smooth geometrically irreducible projective curve $C$ of genus $g\ge n+2$ over $k$, a point $p\in C(k)$, and $L \in \Pic^0(C)$ such that $H^0(C, \mathcal{O}(np))=k$ and $H^0(C,L^i(np))=0$ for $1\le |i|\le l$. 
\end{lemma}

\begin{proof}
We claim that it is enough to construct a smooth, hypergeometric curve $C$ of genus $g\ge n+l+2$, with three points 
$p, p_1, p_2\in C(k)$, which are not ramification points of the double covering $\pi:C\to \P^1$, such that $\pi(p)$, $\pi(p_1)$ and $\pi(p_2)$ are distinct. Indeed,
let us set $L=\OO_C(p_1-p_2)$. Then for every $i$, $1\le i\le d$, 
we have $h^0(\OO(np+ip_1))=h^0(\OO(np+ip_2))=1$ (since every element of $H^0(\OO(n\pi^{-1}(\pi(p))+i\pi^{-1}(\pi(p_1))))$ is a pull-back from 
$\P^1$).
This immediately implies that $h^0(\OO(np))=1$.
Also, if there exists an effective divisor $D$ such that $L^i(np)\simeq \OO(D)$ (resp., $L^{-i}(np)\simeq \OO(D)$)
then we would get $\OO(np+ip_1)\simeq \OO(ip_2+D)$ (resp., $\OO(np+ip_2)\simeq \OO(ip_1+D)$) which is a contradiction. 
Note that these conditions continue to hold after passing to the corresponding curve $C_{\ov{k}}$ over an algebraic closure $\ov{k}$ of $k$.
In particular, $H^0(C_{\ov{k}},\OO)=\ov{k}$ which implies that $C_{\ov{k}}$ is irreducible.

Assume first that the characteristic of $k$ is different from $2$. Then we can define $C$ as the smooth completion of the affine curve
$$y^2=f(x),$$
and take $p,p_1,p_2$ to be some points over $x=0,1,\infty$. 
More precisely, we want to choose $f(x)$ to be a monic separable polynomial $f$ of degree $\ge N:=2(n+l+2)+1$ such that $f(0)$ and $f(1)$ are squares in $k^*$.
Then the projection $(x,y)\mapsto x$ would extend to a double covering $C\to \P^1$, which splits over $0$, $1$ and $\infty$.

If the characteristic of $k$ is zero, we can simply take $f(x)=x^N+16/9$. If $k$ has positive characteristic, we can replace $k$ be the corresponding finite subfield.
To find a polynomial $f$ as above, let us pick three distinct irreducible monic polynomials $f_1$, $f_2$ and $f_3$ of degrees $\ge N$.
Then it is easy to see that one of the polynomials
$$f_1, \ f_2, \ f_3, \ f_1f_2, \ f_1f_3, \ f_2f_3, \ f_1f_2f_3$$
can be taken as $f$: one should consider the images of these elements under the map 
$$f\mapsto (f(0)\mod (k^*)^2, f(1)\mod (k^*)^2)\in k^*/(k^*)^2\times k^*/(k^*)^2\simeq \Z_2\times \Z_2$$
and observe that one of them will go to zero.

In the case when the characteristic of $k$ is $2$, we can define $C$ as the smooth completion of the affine curve 
$$y^2+f(x)y= x(x+1)f(x),$$ 
    where $f(x)$ is a monic separable polynomial for degree $\ge N=2(n+l+2)+1$ and $f(0)=f(1)=1$. It is easy to check that such an affine curve is smooth. 
We find a polynomial $f(x)$ with these properties defined over $\Z/2\sub k$ in the same way as before.
\end{proof}

\begin{lemma}\label{H1-nonvan-lem}
   In the situation of Lemma \ref{CLp-lem}, we have $H^1(C,L^i(np))\neq 0$, for any integer $i$.
\end{lemma}

\begin{proof}
By Serre's duality, we have $h^1(C,L^i(np))= h^0(C, \omega_C \otimes L^{-i}(-np))$, where $\omega_C$ denotes the canonical bundle. 
The degree of the line bundle $\omega_C \otimes L^i(-np)$ is $2g-2-n\geq g$ by our assumption on $g$.
Hence, by Riemann-Roch inequality, $h^0(C, \omega_C \otimes L^{-i}(-np))\ge 1$.
\end{proof}

\subsection{Constructing nice $G-$bundles for a split reductive $G$}
Let $G$ be a connected split reductive group over a field $k$, satisfying the assumption $\bf{Char_G}$ (see Sec.\ \ref{nice-G-sec}).
 In this section, we construct a nice $G-$bundle of level $n$ (see Definition \ref{nice-G-bun-def}) on the pointed curve $(C,p)$ constructed in Lemma \ref{CLp-lem}.

\subsubsection{Group theoretic data}\label{group-th-sec}
We fix $T\sub B\sub G$ where $T \simeq \mathbb{G}_m^r$ is a split maximal torus, $B$ a Borel subgroup.
We denote by $N$ the unipotent radical of $B$, and by $Z$ the center of $G$. We denote the Lie algebras of $G,B,T,N$ and $Z$ by $\mathfrak{g}$, $\mathfrak{b}$, $\mathfrak{t}$, $\mathfrak{n}$ and $\mathfrak{z}$ respectively. We will also denote the opposite Borel (resp., its unipotent radical) by $B^-$ (resp. $N^-$).

Let $X^\bullet(T)$ denote the character lattice of $T$. We denote the set of roots (resp., positive roots, negative roots and simple roots, with respect to $T\sub B$) by $\Phi$ (resp., $\Phi^+$, $\Phi^-$ and $\Delta$). For $\chi \in X^\bullet(T)$ and $\mathcal{F}_T \in \text{Bun}_T(C)$, we denote the associated line bundle by $\mathcal{F}_T^\chi$.

For each root $\a$, we denote by $\fg_\a\sub \fg$ the corresponding root subspace.
We have a natural $T$-equivariant identification 
$$N/[N,N]\rTo{\sim} \mathfrak{n}/[\mathfrak{n},\mathfrak{n}]=\bigoplus_{\a \in\De}\fg_\a$$ 
(where the latter sum is a product of additive groups $\G_a$, with the action of $T$ given by the simple roots $\a$). If $\alpha \in \Delta$, we denote the projection to the $\alpha$-root subspace by $\pi_\alpha$.

\subsubsection{Lifting $T$-bundles to $B$-bundles}\label{B-lifting-sec} 
The idea of our construction of a nice $G$-bundle is as follows. We start with a $T$-bundle, choose its lifting to a $B$-bundle 
(with sufficiently generic behavior) and then take the induced $G$-bundle. 
In this subsection we fix some notation about Cech representatives of various principal bundles that will repeatedly appear in the rest of this section and explain how twisted unipotent bundles encode the lifts of a $T$-bundle to $B$-bundles.

The conjugation action of $T$ on $N$ induces a map $H^1(C,T) \to H^1(C, \text{Aut}(N))$. Thus, with every class $\ga\in H^1(C,T)$ we can associate a sheaf of groups $N^\gamma$
over $C$, which is a form of $N$ (i.e., locally isomorphic to $N$).

Let $\mathcal{F}_T$ be a $T-$bundle on $C$ given by the Cech $1$-cocycle $(\gamma_{ij}\in T(U_{ij}))$ with respect to some open covering $(U_i)$.
To lift the $T$-bundle $\mathcal{F}_T$ to a $B$-bundle, it suffices to choose an $N^\gamma$-bundle. Indeed, using the decomposition $B=T\cdot N$, we can define a $B$-bundle by a Cech $1$-cocycle 
$\sigma_{ij}=\gamma_{ij}\beta_{ij}$, where $\beta_{ij}\in N(U_{ij})$. The cocycle condition  $\sigma_{ij}|_{U_{ijk}} \sigma_{jk}|_{U_{ijk}}= \sigma_{ik}|_{U_{ijk}}$ is equivalent to
$$ (\gamma_{jk}^{-1}|_{U_{ijk}} \beta_{ij}|_{U_{ijk}} \gamma_{jk}|_{U_{ijk}}) \beta_{jk}|_{U_{ijk}}  = \beta_{ik}|_{U_{ijk}} . $$
In other words, $(\beta_{ij})$ is a $\ga$-twisted $1$-cocycle, so it corresponds to an element in $H^1(C,N^\gamma)$. 

\subsubsection{Construction of a nice $G$-bundle} 
We fix $n\ge 1$.
Let $C,p,L$ be the data as in Lemma \ref{CLp-lem} with sufficiently large $l$. Note that to give a $T$-bundle $\FF_T$ amounts to giving an $r$-tuple of
line bundles $(\FF_T^{\a_1},\ldots,\FF_T^{\a_r})$, where $(\a_1,\ldots,\a_r)$ are simple roots.
Assuming that $l$ is sufficiently large $l$, we choose these line bundles in the form $\FF_T^{\a_i}=L^{n_i}$ for some positive integers $(n_i$)
so that $H^0(C,\mathcal{F}_T^\chi(np))=0$ for all $\chi\in X^\bullet(T)$ such that $\chi$ is either a root or a nonzero sum of two roots. 

Let $\ga\in H^1(C,T)$ denote the class of this $T$-bundle $\FF_T$. 
Consider the composition
\begin{equation}\label{N-ga-comp}
H^1(C,N^\gamma) \to H^1(C, (N/[N,N])^\gamma) \simeq \bigoplus_{\a\in\De}H^1(C,\mathbb{G}_a^\gamma)=  \bigoplus_{\a\in\De}H^1(C,\mathcal{F}_T^\alpha) \to 
\bigoplus_{\a\in\De}H^1(C,\mathcal{F}_T^\alpha(np)).
\end{equation}
Note that all the maps are surjective (the first map is surjective because $H^2(C,[N,N]^\gamma)=0$, as $[N,N]^\gamma$ is an iterated extension of line bundles). 
Let $\FF_B$ denote the $B$-bundle on $C$ associated with $\FF_T$ and a class $\b\in H^1(C,N^\gamma)$ such that the image of $\b$
in $H^1(C,\mathcal{F}_T^\alpha(np))$ is nontrivial for all $\alpha \in \Delta$. Such a class exists due to surjectivity of \eqref{N-ga-comp} and the nonvanishing of
the spaces $H^1(C,\mathcal{F}_T^\alpha(np))$ (see Lemma \ref{H1-nonvan-lem}).
Let $P$ be the $G$-bundle induced from $\FF_B$.

\begin{prop}\label{nice-G-bun-prop-bis}
    The $G$-bundle $P$ is nice of level $n$ at $p$. 
\end{prop}

We need the following auxiliary result (most likely, well known).

\begin{lemma}\label{alpha-diff-lem}
For $\alpha \in \Delta$, $t\in \mathfrak{t}$ and $n\in N/[N,N] \simeq \mathfrak{n}/[\mathfrak{n},\mathfrak{n}]$, we have
the equality in $\fg_\a$,
    $$\pi_\alpha (\Ad(n)(t)-t)= d\alpha(t)\pi_\alpha(n),$$
where $d\alpha: \mathfrak{t} \to k$ is the differential of the root $\alpha$. 
\end{lemma}
\begin{proof}
We will repeatedly use the fact (see \cite[Prop.\ 10.5]{Humphreys}) that if $t\in T'\sub T$ and $n\in N'\sub N$, for some closed subgroups $T'$ and $N'$,
then $\Ad(n)(t)-t$ belongs to the Lie subalgebra of the subgroup $[T',N']$. For example, applying this to $N'=[N,N]$ shows that the left-hand side indeed depends only
on $n\mod [N,N]$.
Also, since the adjoint action of $N$ on $\mathfrak{n}/[\mathfrak{n},\mathfrak{n}]$ is trivial,
the identity
$$\Ad(n_1n_2)(t)-t=\Ad(n_1)(\Ad(n_2)(t)-t)+\Ad(n_1)(t)-t$$
shows that both sides are additive in $n$.

Thus, we can start with $n\in \fg_{\a'}$, where $\a'\in \De$. Then $\Ad(n)(t)-t\in \ft+\fg_{\a'}$, so both sides are zero unless $\a'=\a$. In the case $n\in \fg_\a$, both sides
depend on $t \mod \ker(d\a)$, so we are reduced to the rank $1$ case, which follows by a straightforward calculation. 
\end{proof}

\begin
{proof}
[Proof of Proposition \ref{nice-G-bun-prop-bis}]
{\bf Step 1}. Checking that $H^0(C,\mathfrak{g}_P(np))=\mathfrak{z}$.

Since $P$ is the induced $G$-bundle of the $B$-bundle $\mathcal{F}_B$, we have $\mathfrak{g}_P=\mathfrak{g}_{\mathcal{F}_B}$.
Consider the following exact sequence of $B$-modules:
$$0 \to \mathfrak{b} \to \mathfrak{g}\to \mathfrak{n^-} \to 0.$$
The vector bundle $\mathfrak{n}^-_{\mathcal{F}_B}(np)$ admits a filtration whose successive quotients are the line bundles 
$\FF_T^{\a}$ associated with negative roots $\a$, and therefore, it has trivial $H^0$ by the construction of $\FF_T$.
Hence, it remains to prove that $H^0(C,\mathfrak{b}_{\mathcal{F}_B}(np))=\fz.$ 

Consider the following exact sequence of $B$-modules
$$0 \to \mathfrak{n} \to \mathfrak{b}\to \mathfrak{t} \to 0,$$
which induces the short exact sequence of adjoint vector bundles
$$ 0 \to \mathfrak{n}_{\mathcal{F}_B}  \to \mathfrak{b}_{\mathcal{F}_B} \to \mathfrak{t}_{\mathcal{F}_B} \to 0,$$
where $\ft_{\FF_B}=\ft\ot\OO_C$ is a trivial bundle.
Note that $\fn_{\FF_B}$ has a filtration whose successive quotients are the line bundles 
$\FF_T^{\a}$ associated with positive roots $\a$, hence, $H^0(C,\fn_{\FF_B})=0$, by the construction of $\FF_T$.

Thus, $H^0(C,\fb_{\FF_B}(np))$ is equal to the kernel of the connecting homomorphism
$$\de:H^0(C,\ft\ot \OO_C(np)) \rightarrow H^1(C, \mathfrak{n}_{\mathcal{F}_B}(np)).$$ 
It remains to prove that the kernel of $\de$ is contained in $\mathfrak{z}$.

In view of the following map between short exact sequences of $B$-modules
\[
\xymatrix{
  0 \ar[r] & \mathfrak{n} \ar[d] \ar[r] & \mathfrak{b} \ar[d] \ar[r] & \mathfrak{t} \ar[d] \ar[r] & 0 \\
  0 \ar[r] & \mathfrak{n}/[\mathfrak{n},\mathfrak{n}] \ar[r] & \mathfrak{b}/[\mathfrak{n},\mathfrak{n}] \ar[r] & \mathfrak{t} \ar[r] & 0
}
\]
it is enough to prove that the kernel of the connecting homomorphism of the associated bundles for the bottom sequence, 
$$\delta'_n: H^0(C, \ft\ot\OO_C(np)) \to H^1(C,\bigoplus_{\alpha \in \Delta}  \mathcal{F}_{T}^\alpha(np))$$ 
is contained in $\mathfrak{z}$. 

Since $H^0(C, \ft\ot \OO_C(np))=\ft$ by the construction of $(C,p)$, it is enough to calculate the connecting homomorphism 
$$\delta':\ft=H^0(C,\mathfrak{t}_{\mathcal{F}_B}) \to \bigoplus_{\a\in \De} H^1(C, \mathcal{F}_T^\alpha).$$
Let $\beta_\alpha \in H^1(C,\mathcal{F}_T^\alpha)$ denote the image of the class $\beta\in H^1(N^\ga)$ (which was used to define $\FF_B$.
Then we claim that the $\a$-component of $\de'$
is given by  $$f \mapsto d\alpha(f) \cdot \beta_\alpha.$$

Indeed, we can compute this using Cech representatives. As in Sec.\ \ref{B-lifting-sec}, we consider a Cech $1$-cocycle $(\ga_{ij})$ representing $\FF_T$ and $\ga$-twisted
$1$-cocycle $(\b_{ij})$ representing $\b\in H^1(C,N^\ga)$ (with respect to an affine covering $(U_i)$ of $C$). We start with a global section 
$f \in H^0(C,\mathfrak{t}_{{\mathcal{F}_B}})  \simeq \mathfrak{t}$.
Let $f_i$ be liftings of $f|_{U_i}$ to 
$(\mathfrak{b}/[\mathfrak{n},\mathfrak{n}])_{{\mathcal{F}_B}}(U_i) \simeq \mathfrak{b}/[\mathfrak{n},\mathfrak{n}] \otimes \mathcal{O}$.
Then on $U_{ij}$ we have 
$$f_i \equiv \Ad(\sigma_{ij})f_j \mod [\fn,\fn]_{\FF_B},$$
and the $\a$-component of $\de'(f)$ is represented by the $1$-cocycle 
$$\pi_\a(\Ad(\sigma_{ij})f_j-f_j)=\pi_\a(\Ad(\gamma_{ij} \beta_{ij})f_j-f_j)=\pi_\a(\Ad(\beta_{ij})f_j-f_j).$$ 
Hence, our claim follows from Lemma \ref{alpha-diff-lem}.

Since the image of each $\beta_\alpha$ in $H^1(C, \mathcal{F}_T^\alpha(np))$ is still nonzero (by our choice of $\b$), we deduce that
if $f\in \ft$ is in the kernel of $\de'_n$ then $f \in \bigcap_{\alpha \in \Delta}\text{ker}(d\alpha)=\mathfrak{z}$. This finishes the proof that 
$H^0(C,\mathfrak{g}_P(np))=\mathfrak{z}$.

\medskip

\noindent
{\bf Step 2}. 
    Let us set $U=C-\lbrace p \rbrace$. Now we will check that if $\phi$ is an automorphism of the $G$-bundle $P|_U$, such that the 
    induced morphism $\phi_{\mathfrak{g}}:\mathfrak{g}_P|_U \to \mathfrak{g}_P|_U$, extends to a map $\wt{\phi}_{\mathfrak{g}}:\mathfrak{g}_P \to \mathfrak{g}_P(np)$, then $\phi \in Z(k)$. 

   First, we will show that $\phi$ is actually induced by a $B$-bundle automoprhism of $\FF_B$. Note that 
    $$\Hom(\mathfrak{b}_{\FF_B}, \mathfrak{n}^-_{\FF_B}(np))=H^0(C,\mathfrak{b}_{\FF_B}^\vee \otimes \mathfrak{n}^-_{\FF_B}(np))=0,$$ 
    \begin{equation}\label{n-b-vanishing}
    \Hom(\mathfrak{n}^-_{\FF_B}, \mathfrak{b}_{\FF_B} (np))=H^0(C,\mathfrak{b}_{\mathcal{F}_B} \otimes (\mathfrak{n}^-_{\mathcal{F}_B})^\vee(np)) =0.
    \end{equation}
     Indeed, this follows from the existence of a filtration of $\mathfrak{b}_{\FF_B}^\vee \otimes \mathfrak{n}^-_{\FF_B}$ 
     (resp., $ \mathfrak{b}_{\FF_B}\ot (\mathfrak{n}^-_{\FF_B})^\vee$)
     with the subquotients of the form
    $\mathcal{F}_T^{\chi}$, with $\chi$ either in $\Phi^-$ (resp., $\Phi$) or of the form $\chi=\a+\a'$ where $\alpha , \alpha' \in \Phi^-$ (resp., $\Phi$), since for such $\chi$ we have
    $H^0(C,\mathcal{F}_T^{\chi}(np))=0$ by our construction. 
    So, the exact sequence 
    $$0\to\Hom(\fb_{\FF_B},\fb_{\FF_B}(np))\to \Hom(\fb_{\FF_B},\fg_{\FF_B}(np))\to \Hom(\fb_{\FF_B},\fn^0_{\FF_B}(np))=0$$
    shows that $\wt{\phi}_{\fg}$ maps $\fb_{\FF_B}$ to $\fg_{\FF_B}$, hence $\phi_{\fg}$ preserves the subbundle $\fb_{\FF_B}$.

 Let $\und{\Aut}_B(\FF_B)$ (resp. $\und{\Aut}_G(P)$) be the sheaf of automorphisms of $\FF_B$ as a $B$-bundle (resp. of $P$ as $G$-bundle). 
Note that $\und{\Aut}_B(\FF_B)$ is a subsheaf in $\und{\Aut}_G(P)$. We need to show that $\phi$ lies in
 $\Aut_B(\FF_B)(U) \sub \Aut_G(P)(U)$. The problem is local and thus we can assume $\FF_B$ to be trivial. Then $\phi$ is a function $U \to G$, and
 the induced map $\phi_\mathfrak{g}$ is given by $\text{Ad}(\phi(x))$ acting on $\mathfrak{g}\ot \OO$. Since $\text{Ad}(\phi(x))$ preserves $\mathfrak{b}$, it follows that $\phi(x) \in B$ (because for $g\in G$, $\text{Ad}(g)$ preserves $\mathfrak{b}$ if and only if $g \in B$). This proves that $\phi$ comes from an automorphism of $\FF_B|_U$, which we still denote by $\phi$.

    Let $\phi_T$ be the induced automorphism of $\FF_T$ as a $T$-bundle. Then $\phi_T$ is given by an element in $T(U)$. Since any invertible function on $U$ is constant and thus  $T(U)=T(k)$, $\phi_T$ is actually given by some $t \in T(k)$. 
   Hence, the automorphism $\phi$ of $\FF_B$ is given by a collection $\phi_i=tn_i\in B(U_i)$, where $n_i\in N(U_i)$, satisfying 
   $$tn_i=\si_{ij}tn_i\si_{ij}^{-1},$$
   where $\si_{ij}=\ga_{ij}\b_{ij}$ is the Cech $1$-cocycle defining $\FF_B$ (see Sec.\ \ref{B-lifting-sec}). Using commutativity of $T$ we can rewrite this as
   $$(\ga_{ij}^{-1}n_i\ga_{ij})\b_{ij}=(t^{-1}\b_{ij}t)n_j,$$
   which implies that the $\ga$-twisted $1$-cocycles $(\b_{ij})$ and $(t^{-1}\b_{ij}t)$ have the same class in $H^1(C,N^\ga)$.
   Projecting this equality to $H^1(C,N/[N,N])$ and then to $H^1(C,\FF^\a_T)$, we deduce that 
   $$\a(t)\cdot \b_\a=\b_\a$$
   for each $\a\in\Delta$. Since all $\b_\a$ are nonzero, we obtain
   $t \in \bigcap_{ \alpha \in \Delta} \text{ker}(\alpha) =Z$. 
    
    Thus, rescaling $\phi$ by a central element we may assume that $\phi_T=t=1$. We claim that in fact $\phi_{\fg}=1$ or equivalently $\wt{\phi}_{\fg}=1$.
    Note that we have a commutative diagram of maps of vector bundles on $C$,
            \[\begin{tikzcd}
0\arrow{r} 
    & \mathfrak{b}_{\FF_B} \arrow{d}{\wt{\phi}_{\fb}}\arrow{r}
        & \mathfrak{g}_{\FF_B} \arrow{d}{\wt{\phi}_{\mathfrak{g}}} \arrow{r}
            & \mathfrak{n}^-_{\FF_B} \arrow{d}{\wt{\phi}_{\fn^-}} \arrow{r}
                & 0 \\
0 \arrow{r}
    & \mathfrak{b}_{\FF_B}(np)  \arrow{r}
        & \mathfrak{g}_{\FF_B}(np) \arrow{r}
            & \mathfrak{n}^-_{\FF_B}(np) \arrow{r}
                & 0
\end{tikzcd}. \] 
Taking into account the vanishing \eqref{n-b-vanishing} and applying Lemma \ref{easy-Hom-lem} below, we see that to prove our claim it is enough to check
that $\wt{\phi}_{\fb}=1$ and $\wt{\phi}_{\fn^-}=1$. 

Since $\phi_T: \mathcal{F}_T \to \mathcal{F}_T$ is the identity, the induced maps $\mathcal{F}_T^\alpha \to \mathcal{F}_T^\alpha$ are also identity maps for all $\alpha \in \Phi$. 
Since $\Hom(\mathcal{F}_T^\alpha, \mathcal{F}_T^{\alpha'}(np))=0$ for any pair of distinct roots $\alpha,\alpha' \in \Phi$, we can apply Lemma \ref{easy-Hom-lem}
below successively to the filtration 
$$\fb_{\FF_B}\supset \mathfrak{n}_{\FF_B}\supset [\mathfrak{n},\mathfrak{n}]_{\FF_B} \supset [\mathfrak{n},[\mathfrak{n},\mathfrak{n}] ]_{\FF_B} \supset\ldots$$
(resp., a similar filtration of $\fn^-_{\FF_B}$) and deduce 
that $\wt{\phi}_{\mathfrak{b}}=1$ (resp., $\wt{\phi}_{\fn^-}=1$), as required. 

Finally, we claim that if an automorphism $\phi$ of a $B$-bundle $\FF_B$ satisfies $\phi_T=1$ and $\phi_{\fg}=1$ then $\phi=1$. 
The problem is local, so we can assume that $\FF_B$ is trivial and $\phi$ is given by a map $U \xrightarrow{\phi} B$. 
Then
 $\phi_\mathfrak{g}$ is given by the composition 
 $$U \xrightarrow{\phi}B \hookrightarrow G \xrightarrow{\Ad_G} \text{Aut}(\mathfrak{g}).$$ 
 Since $\phi_{\fg}=1$, we deduce
 that $\phi$ takes values in $\text{ker}(\Ad_G)=Z$ (see \cite[Prop.\ 3.3.8]{Conrad}).
On the other hand, since  $\phi_T=1$, $\phi$ takes values in $N$. But $N \cap Z=1$, so $\phi=1$.
\end{proof}

\begin{lemma}\label{easy-Hom-lem}
Let $D$ be an effective Cartier divisor on a scheme $S$. 
Suppose we have the following commutative diagram of coherent sheaves on $S$, in which both rows are exact, 
\[\begin{tikzcd}
0\arrow{r} 
    & \EE' \arrow{d}{1}\arrow{r}
        & \EE \arrow{d}{\phi} \arrow{r}
            & \EE'' \arrow{d}{1} \arrow{r}
                & 0 \\
0 \arrow{r}
    & \EE'(D)  \arrow{r}
        & \EE(D) \arrow{r}
            & \EE''(D) \arrow{r}
                & 0
\end{tikzcd} \]
Assume in addition that $\Hom(\EE'',\EE'(D))=0$. Then
then $\phi=1$.
\end{lemma}

The proof is straightforward (by considering $\phi-1$).

\subsection{$B$-structures of very negative degrees on the trivial bundle over $\P^1$}
Let $G$ be a connected split reductive group over a field $k$. 
We keep the notation of Sec.\ \ref{group-th-sec}. In addition, we denote by $X_\bullet(T)$ the coweight lattice of $T$.

Given a $B$-bundle $F$ on a curve $C$, for every $\alpha \in X^\bullet(T)$ we define the degree $\deg_\a(F)\in \Z$ as degree of the line bundle associated with $F$ and
the homomorphism $B\to T\to \G_m$ given by $\alpha$. 

\begin{prop}\label{B-str-prop}
For any positive integer $m$, there exists a $B$-bundle $F$ on $\mathbb{P}^1$ such that the induced $G$-bundle is trivial and 
$\deg_\a(F)<-m$ for all $\alpha \in \Delta$.
\end{prop}

\begin{proof}
   We use the standard open cover $(U_i)_{i=0,1}$ of $\P^1$, where  $U_0$ (resp. $U_1$) is the affine line $\Spec k[t]$  (resp. $\Spec k[t^{-1}]$), and $U_{01}=\Spec k[t,t^{-1}]$ the intersection. Let us consider the $B$-bundle $F$ on $\P^1$, trivial over $U_0$ and $U_1$, with a transition function $\ga\cdot\b\in B(k[t,t^{-1}])$,
where $\ga\in T(k[t,t^{-1}])$, $\b\in N(k[t,t^{-1}])$. We will choose $\ga$ to be a coweight, i.e., a homomorphism $\G_m\to T$.
Then the requirements on $\ga$ and $\b$ are that $\lan \ga,\a\ran<-m$ for every $\a\in \De$, and that there exist $\phi_0 \in G(k[t])$ and $\phi_1 \in G(k[t^{-1}])$, such that
$$\ga\b=\phi_0\phi_1$$
(i.e., the induced $G$-bundle is trivial).
Existence of such data follows from a more precise Lemma \ref{B-str-lem} below: in the notations of this lemma we take $\b=n_+^{-1}$, $\phi_0=n_-$, $\phi_1=g$.
\end{proof}

\begin{lemma}\label{B-str-lem}
    Let $G$ be a connected split reductive group. For any $m>0$, there exists $\gamma \in X_\bullet(T) \subset T(k[t^{\pm 1}])$ such that $\langle \gamma, \a \rangle <-m$ for every $\a \in \Delta$ and admits a decomposition $\gamma= n_-gn_+$ for some $n_-\in N_-(k[t])$, $g\in G(k[t^{-1}])$ and $n_+ \in N(k[t])$.
\end{lemma}

\begin{proof}
     It suffices to prove the assertion for $[G,G]$ instead of $G$, so we can assume $G$ to be semisimple. Furthermore, by considering (commuting) simple subgroups of $G$ corresponding to the simple factors of $\fg$, we reduce to the case of a simple $G$.

     We will use induction on the rank with the following induction step (we will prove the base case later).
          Let $I\subset \Delta$ denote the set of simple roots (identified with the nodes in the Dynkin diagram) to which the affine root attaches on the extended Dynkin diagram 
          (note that $|I|=1$ unless $G$ is of type $A$).
          Let $P$  be the standard parabolic subgroup of $G$ that corresponds to the diagram with the subset $I$ removed. Also, let $L$ denote the derived subgroup of the Levi subgroup corresponding to $P$, and let $T_L$ denote the maximal torus of $L$ contained in $T$. Assuming that the assertion holds for smaller rank, we have $\gamma_L \in X_\bullet(T_L)$ such that $\langle \gamma_L, \alpha \rangle <-m $ for all $\alpha \in \Delta(L)=\De\setminus I$, and 
          $$\gamma_L=n_-gn_+$$
     for some $n_- \in (N_-)_L(k[t])$, $n_+ \in N_L(k[t])$ and $g \in L(k[t^{-1}])$, where $N_{L}$ (resp. $(N_-)_L$) denote the corresponding unipotent subgroup (resp. opposite unipotent subgroup).
     
     Let $\lambda$ be the highest positive root of $G$, $\la^\vee$ the corresponding coroot.
     Then we have
     \[ \langle \lambda^\vee, \alpha \rangle = \begin{cases} 
          a, & \alpha \in I, \\
          0, & \alpha \in \Delta-I,
       \end{cases}
    \]
where $a$ is equal to $1$ or $2$. 
Hence, there exists a positive integer $c$ such that 
$$\langle \gamma_L-c\lambda^\vee, \alpha \rangle <-m ,$$
for all $\alpha \in \Delta$.

Let us denote $G_\lambda$ be the subgroup of $G$ generated by the root groups $U_\lambda$ and $U_{-\lambda}$. 
It follows from the rank one case that there exists a decomposition in $G_\la(k[t,t^{-1}]$,
$$-c\lambda^\vee=n'_-g'n'_+$$
for some $n'_- \in U_{-\la}(k[t])$, $n'_+\in U_\la(k[t])$
and $g' \in G_\lambda(k[t^{-1}])$. 

Finally, we claim that the coweight $\gamma_L-c\lambda^\vee\in X_\bullet(T)$ has the desired decomposition. 
Indeed, since $\lambda$ is orthogonal to $\alpha$ for all $\alpha \in \Delta-I$, it follows from the the Chevalley's commutator formula (see \cite[Prop.\ 5.1.14]{Conrad}) 
that $G_\lambda$ commutes with $L$. Hence, 
$$\gamma_L-c\lambda^\vee = (n_-n'_-)(gg')(n_+n_+')$$
is the desired decomposition.
 

Since in our induction step we have $|I|=1$ unless $G$ is of type $A$, it remains to check that the assertion holds for $G=\SL_2$ and $G=\SL_3$.     
These cases follow from the identities
         \begin{gather}\nonumber
 \begin{bmatrix} t^{-m} & 0 \\ 0 & t^m \end{bmatrix}
 =   \begin{bmatrix}  1 &   0 \\   -t^m+1 &   1   \end{bmatrix}
 \begin{bmatrix} t^{-m} & -t^{-m}-1\\ -t^{-m}+1 & t^{-m} \end{bmatrix}
\begin{bmatrix} 1 & t^m+1 \\ 0 & 1\end{bmatrix},
\end{gather}
\begin{gather}\nonumber
\begin{bmatrix} t^{-m} &0& 0\\ 0&1&0 \\ 0 &0& t^m \end{bmatrix}=
  \begin{bmatrix}1 &   0 &0 \\    0&1&0 \\   -t^m+1 & 0&   1   \end{bmatrix}  \begin{bmatrix} t^{-m} &0 & -t^{-m}-1\\ 0&1&0 \\ -t^{-m}+1 &0 & t^{-m} \end{bmatrix}
\begin{bmatrix} 1 & 0 & t^m+1\\ 0 & 1& 0\\ 0 & 0& 1\end{bmatrix}.
\end{gather}        
\end{proof}

\end{document}